\title{Countability constraints in order-theoretic approaches to computability}
\author{Pedro Hack, Daniel A. Braun, Sebastian Gottwald}
\date{ }
\tikzset{main node/.style={circle,fill=blue!40,draw,minimum size=2cm,inner sep=0pt},}
\tikzset{other node/.style={circle,fill=blue!20,draw,minimum size=1.5cm,inner sep=0pt},}
\newtheorem{prop}{Proposition}
\newtheorem{coro}{Corollary}
\newtheorem{defi}{Definition}
\newtheorem{lem}{Lemma}
\newtheorem{rem}{Remark}
\newtheorem{teo}{Theorem}
\newcommand\twoheaduparrow{\mathrel{\rotatebox[origin=c]{90}{$\twoheadrightarrow$}}}
\newcommand\twoheaddownarrow{\mathrel{\rotatebox[origin=c]{270}{$\twoheadrightarrow$}}}
\tikzset{main node/.style={circle,fill=blue!40,draw,minimum size=2cm,inner sep=0pt},}
\tikzset{other node/.style={circle,fill=blue!20,draw,minimum size=1.5cm,inner sep=0pt},}
\DeclareMathSymbol{\twoheadrightarrow}  {\mathrel}{AMSa}{"10}
\begin{document}

\maketitle

\begin{abstract}
Computability on uncountable sets has no standard formalization, unlike that on countable sets, which is given by Turing machines. Some of the approaches to define computability in these sets rely on order-theoretic structures to translate such notions from Turing machines to uncountable spaces. Since these machines are used as a baseline for computability in these approaches, countability restrictions on the ordered structures are fundamental.
Here, we show several relations between the usual countability restrictions in order-theoretic theories of computability and some more common order-theoretic countability constraints, like order density properties and functional characterizations of the order structure in terms of multi-utilities. As a result, we show how computability can be introduced in some order structures via countability order density and multi-utility constraints.
\end{abstract}

\section{Introduction}
The formalization of computation on the natural numbers was initiated by Turing \cite{turing1937computable,turing1938computable} with the introduction of Turing machines \cite{rogers1987theory}. Such an approach is taken as canonical today, since other attempts to formalize it have proven to be equivalent
\cite{cutland1980computability,rogers1987theory}.
Because of that, Turing machines are deployed as a baseline for computation from which it is transferred to other spaces of interest. The theory of numberings \cite{ershov1999theory,badaev2000theory}, for example, deals with computability on countable sets in general. The case of uncountable sets is more involved. In fact, despite several attempts \cite{abramsky1994domain,weihrauch2012computability,weihrauch2012computable}, no canonical way of introducing computability on uncountable sets has been established. This results, for example, in the absence of a formal definition of algorithm on the real numbers. More specifically, the choice of some model or another may result in changes regarding computability of certain elementary operations, like multiplication by $3$ \cite{di1996real}.

Among the most extended approaches to computability on uncountable spaces \cite{kreitz1985theory,weihrauch2012computability} some rely on order-theoretic structures \cite{scott1970outline,ershov1972computable,abramsky1994domain,keimel2017domain,hack2022computation}. Of particular importance are those dealing with computability on the real numbers \cite{di1996real,edalat1999domain}. These approaches are based on two main features: the mathematical structure they require and the countability restriction they impose on such a structure in order to translate computability from Turing machines to uncountable sets. We address the general mathematical structure in the accompanying paper \cite{hack2022computation} and deal with the countability restrictions here.
In particular, we are interested in the relationship between such restrictions and both order density and multi-utilities. 

More specifically, we begin in Section \ref{order in compu} recalling a general order-theoretical approach to computable elements on uncountable sets which was recently introduced in \cite{hack2022computation}. Right after, in Section \ref{density in compu}, we relate the countability restrictions in that approach to order density properties. We continue, in Section \ref{uniform compu}, recalling the more extended, although narrower, approach to computability in domain theory, which we refer to as \emph{uniform computability}, and relating it to the appraoch in \cite{hack2022computation}. In Section \ref{density in uniform compu}, we connect order density properties with the countability restrictions in uniform computability. We follow this, in Section \ref{complete and continuous}, linking order density with order completeness and to a weak form of computability for functions, namely, Scott continuity. We finish, in Section \ref{func restrictions}, addressing the relation between countability restrictions and multi-utilities in both the uniform and non-uniform approaches.

\section{Computability via ordered sets}
\label{order in compu}

In this section, we briefly recall the fundamental notions of an order-theoretic approach to computability on uncountable sets which was recently introduced in \cite{hack2022computation}. We will define a structure which carries computability from Turing machines, namely directed complete partial orders with an effective weak basis. We do not address how computability can be translated from representatives of this structure to other spaces of interest (see \cite{hack2022computation} and the references therein).


Before introducing directed complete partial orders, we
include some definitions about
the formal approach to computability on $\mathbb{N}$
based on Turing machines.

\begin{defi}[Computable functions and recursively enumerable sets {\cite{turing1937computable,rogers1987theory}}]
A function $f:\mathbb{N} \to \mathbb{N}$ is \emph{computable} if there exists a Turing machine which, for all $ n \in \mathbb{N}$, halts on input $n$, that is, finishes after a finite amount of time, and returns $f(n)$. Note what we call a computable function is also referred to as a \emph{total recursive function} to differentiate it from functions $g:\mathbb{N} \to \mathbb{N}$ where $\text{dom} (g) \subset \mathbb{N}$ holds \cite{rogers1987theory}, which we call \emph{partially computable}.
A subset $A \subseteq \mathbb{N}$ is said to be \emph{recursively enumerable} if either $A=\emptyset$ or there exists a computable function $f$ such that $A=f(\mathbb{N})$.
\end{defi}

Recursively enumerable sets are, thus, the subsets of $\mathbb{N}$ whose elements can be produced in finite time, as we can introduce the natural numbers one by one in increasing order in a Turing machine and it will output one by one, each in finite time, all the elements in $A$ (possibly with repetitions). Note that there exist subsets of $\mathbb{N}$ which are not recursively enumerable \cite{rogers1987theory}. As we are also interested in computability on the subsets of $\mathbb{N}^2$, we translate the notion of recursively enumerable sets from $\mathbb{N}$ to $\mathbb{N}^2$ using pairing functions. A \emph{pairing function} $\langle \cdot,\cdot\rangle $ is a computable bijective function $\langle \cdot,\cdot\rangle: \mathbb{N} \times \mathbb{N} \rightarrow \mathbb{N}$.\footnote{Notice, just like we defined computable functions $f:\mathbb{N} \rightarrow \mathbb{N}$, we can define computable functions $f:\mathbb{N} \times \mathbb{N} \rightarrow \mathbb{N}$ via Turing machines.} Since it is a common practice \cite{rogers1987theory}, we fix in the following $\langle n,m \rangle= \frac{1}{2} ( n^2+ 2nm + m^2 + 3n + m)$, the \emph{Cantor pairing function}.

Before continuing, we introduce an important concept for the following, finite maps.

\begin{defi}[Finite map {\cite{hack2022computation}}]
We say a map $\alpha: \text{dom}(\alpha) \to A$, where $\text{dom}(\alpha) \subseteq \mathbb{N}$, is a finite map for $A$ or simply a finite map if $\alpha$ is bijective and both $\alpha$ and $\alpha^{-1}$ are effectively calculable.
\end{defi}

Finite maps aim to translate computability from the natural numbers to another countable set $A$. Note the definition of finite maps relies on the informal notion of \emph{effective calculability}. This is  the case since no general formal definition for \emph{computable} maps $\alpha: \mathbb{N} \to A$ is known \cite{cutland1980computability}. In fact, the struggle between formal and informal notions of computability, best exemplified by Church's thesis \cite{rogers1987theory}, lies at the core of computability theory and is responsible for the introduction of different formal notions of computability \cite[Chapter 3]{cutland1980computability}. Finite maps
are also known as \emph{effective denumerations} \cite{cutland1980computability} or \emph{effective enumerations} \cite{scott1970outline}.

We define now the order structure on which we rely to introduce computability in some (potentially uncountable) set $P$ and connect it, right after, with Turing machines.

\begin{defi}[Partial order {\cite{bridges2013representations}}]
A \emph{partial order} $\preceq$ on a set $P$ is a reflexive ($x \preceq x$ $\text{for all } x \in P$), transitive ($x \preceq y$ and $y \preceq z$ imply $x \preceq z$ $\text{for all } x,y,z \in P$) and antisymmetric ($x \preceq y$ and $y \preceq x$ imply $x = y$ $\text{for all } x,y \in P$) binary relation. We will call a pair $(P,\preceq)$ a \emph{partial order} and denote it simply by $P$.
\end{defi}

We may think of $P$ as a set of data and of $\preceq$ as a representation of the precision (or \emph{information}) relation between different elements in the set. Given $x,y \in P$, we may read $x \preceq y$ like \emph{$y$ is at least as informative as $x$} or like \emph{$y$ is at least as precise as $x$}.

We intend now to introduce the idea of some $x \in P$ being the limit of other elements in $P$, that is, the idea that one can generate some element $y \in P$ via a process which outputs other elements of $P$ (which approximate $y$ to arbitrary precision). This notion is formalized by the least upper bounds of directed sets.

\begin{defi}[Direct set and least upper bound {\cite{abramsky1994domain}}]
$A \subseteq P$ is a \emph{directed set} if, given $a,b \in A$, there exist some $c \in A$ such that $a\preceq c$ and $b\preceq c$. If $A \subseteq P$ is a directed set, then $b \in P$ is the \emph{least upper bound of $A$} if $a \preceq b$ $\text{for all } a \in A$ and, given any $c\in P$ such that $a \preceq c$ $\text{for all } a \in A$, then $b \preceq c$ holds. We denote the least upper bound of $A$ by $\sqcup A$ and also refer to it as the \emph{supremum of $A$}.
\end{defi}

Hence, we can generate some $x \in P$ by generating a directed set $A$ whose upper bound is $x$, $x = \sqcup A$. We have restricted ourselves to directed sets since we can think of them as the output of some computational process augmenting the precision or information given that, for any pair of outputs, there is a third which contains their information and, potentially, more. Directed sets are, thus, a formalization of a computational process having a direction, that is, processes gathering information in a consistent way.
Of particular importance are \emph{increasing sequences} or \emph{increasing chains}, subsets $A \subseteq P$ where $A=(a_n)_{n\geq0}$ and $a_n \preceq a_{n+1}$ $\text{for all } n \geq 0$. We can interpret increasing sequences as the output of some process where information increases every step.

Any process whose outputs increase information should tend towards some element in $P$, that is, any directed set $A \subseteq P$ should have a supremum $\sqcup A \in P$. A partial order with such a property is called \emph{directed complete} or a \emph{dcpo}. Note, for example, the partial order $P_0=\big((0,1),\leq\big)$ is not directed complete.

Some subsets $B \subseteq P$ are able to generate all the elements in $P$ via the supremum of directed sets contained in $B$. We refer to them as \emph{weak bases}.

\begin{defi}[Weak basis {\cite{hack2022computation}}]
 A subset $B \subseteq P$ of a dcpo $P$ is a weak basis if, for each $x \in P$, there exists a directed set $B_x \subseteq B$ such that $x=\sqcup B_x$.   
\end{defi}

 We are particularly interested in dcpos where countable weak bases exist, since we intend to inherit computability from Turing machines. In case we have some computational process whose outputs are in $B$ and which is approaching some $x \in P\setminus B$, we would like to be able to provide, after a finite amount of time, the best approximation of $x$ so far. In order to do so, we need to distinguish the outputs we already have in terms of precision. This is possible if the weak basis is \emph{effective}.

 \begin{defi}[Effective weak basis {\cite{hack2022computation}}]
 A countable weak basis $B \subseteq P$ of a dcpo $P$ is effective if there exist both a finite map for $B=(b_n)_{n\geq0}$ and a computable function $f:\mathbb{N} \to \mathbb{N}$ such that $f(\mathbb{N}) \subseteq \{\langle n,m \rangle| b_n \preceq b_m\}$ and, for each $x \in P$, there is a directed set $B_x \subseteq B$ such that $\sqcup B_x = x$ and, if $b_n,b_m \in B_x\setminus \{x\}$,
then there exists some $b_p \in B_x$ such that $b_n,b_m \prec b_p$ and $\langle n,p \rangle, \langle m,p \rangle \in f(\mathbb{N})$.
 \end{defi}

The intuition behind the effectivity is that we can, by finite means, get progressively more \emph{informative} elements from some directed set.
Note we may show a countable weak basis $B=(b_n)_{n\geq0}$ is effective by proving the stronger property that
\begin{equation*}
\label{eff weak basis}
\{\langle n,m \rangle| b_n \preceq b_m\}
\end{equation*}
is recursively enumerable. If this stronger condition is satisfied, then, for any finite subset $(b_n)_{n=1}^N \subseteq B$ where all elements are related, we can find some $n_0 \leq N$ such that $b_n \preceq b_{n_0}$ $\text{for all } n \leq N$ and, since $\alpha: \mathbb{N} \to B$ is a finite map, we can determine the best approximation so far, $\alpha(n_0)$. We define now computable elements for dcpos with an effective weak basis.

\begin{defi}[Computable element {\cite{hack2022computation}}]
\label{def: comput ele}
If $P$ is a dcpo, $B\subseteq P$ is an effective weak basis and $\alpha$ is a finite map for $B$, then
an element $x \in P$ is computable if there exists some $B_x \subseteq B$ such that the properties in the definition of effectivity are fulfilled and $\alpha^{-1}(B_x) \subseteq \mathbb{N}$ is recursively enumerable.
\end{defi}

We have achieved the goal of deriving computability (for potentially uncountable sets) from Turing machines via dcpos. Note that computable elements generalize the approach by Turing to computability on $P_{inf}(\mathbb{N})$, the family of infinite subsets of $\mathbb{N}$ (see \cite{hack2022computation}). The dependence of computability on the order-theoretic model is also addressed in \cite{hack2022computation}.

To recapitulate, the main features of our picture are $(1)$ a map from the natural numbers to some countable set of \emph{finite} labels $B=(b_n)_{n\geq0}$ and $(2)$ a partial order $\preceq$ which can be somewhat encoded via a Turing machine and which allows us to both associate to some infinite element of interest $x \in P$ a subset of our labels $B_x \subseteq B$ which converges to it and, in some sense, to provide approximations of $x$ to arbitrary precision.
As a result, the computability of $x$ reduces to whether $B_x$ can be finitely described or not.

Note that the structure of the partial order $P$ is fundamental to address higher type computability. In case $P$ is \emph{trivial} (also known as \emph{discrete}), that is, $x \preceq y$ if and only if $x=y$ $\text{for all } x,y \in P$ \cite{abramsky1994domain}, we cannot extend computability beyond countable sets and we end up considering countable sets with finite maps towards the natural numbers. This situation, thus, reduces our approach to the \emph{theory of numberings} \cite{ershov1999theory,badaev2000theory,badaev2008computability}.

While the set of computable elements in a dcpo is countable, since the set of recursively enumerable subsets of $\mathbb{N}$ is countable \cite{rogers1987theory}, the cardinality of a dcpo with an effective weak basis is bounded by the cardinality of the continuum $\mathfrak{c}$ (see \cite{hack2022computation}). In fact, it is in the uncountable case where the order structure is of interest, since the theory of numberings is insufficient.

\subsection{Examples}
\label{examples}

To conclude this section, we list three examples of dcpos with effective weak bases which will be relevant in the following.

\subsubsection{The Cantor domain}

If $\Sigma$ is any finite set of symbols, an \emph{alphabet}, we denote by $\Sigma^*$ the set of finite strings of symbols in $\Sigma$ and by $\Sigma^\omega$ the set of countably infinite sequences of symbols. The union of these last two sets
is called the \emph{Cantor domain} or the \emph{Cantor set model} \cite{martin2000foundation,blanck2008reducibility} when we equip it with the prefix order. That is, the Cantor domain is the pair $(\Sigma^{\infty},\preceq_C)$, where
\begin{equation}
\label{Cantor domain}
\begin{split}
    \Sigma^\infty &\coloneqq \Big\{x\Big|x:\{1,..,n\} \to \Sigma,\text{ }0\leq n\leq \infty\Big\}, \\
    x \preceq_C y &\iff |x| \leq |y| \text{ and } x(i)=y(i) \text{ } \text{for all } i \leq |x|,
   \end{split}
\end{equation}
$|s|$ is the cardinality of the domain of $s \in \Sigma^\infty$, and $|\Sigma|<\infty$. One can see $\Sigma^*$ is an effective weak basis for $\Sigma^{\infty}$\cite{hack2022computation}.

\subsubsection{The interval domain}

The \emph{interval domain} \cite{scott1970outline,di1996real,edalat1999domain} consists of the pair $(\mathcal{I}, \sqsubseteq)$, where
\begin{equation}
\label{interval domain}
\begin{split}
    \mathcal{I}\coloneqq& \Big\{[a,b] \subseteq \mathbb{R}\Big| a,b \in \mathbb{R},\text{ }a \leq b\Big\}\cup \Big\{\perp\Big\}, \text{ and } \\
    x \sqsubseteq y \iff& x= \perp \text{ or } x=[a,b], y=[c,d],  a \leq c \text{ and } d \leq b.
    \end{split}
    \end{equation}
Note that one can see $B_{\mathcal{I}} \coloneqq \{[p,q] \subseteq \mathbb{R}|p \leq q,\text{ }p,q \in \mathbb{Q}\} \cup \{\perp\}$ is an effective weak basis for $(\mathcal{I}, \sqsubseteq)$. If $P$ is a partial order, we will denote by $\perp$ an element $x \in P$, if it exists, such that $x \preceq y$ $\text{for all } y \in P$, as we just did for the interval domain.

\subsubsection{Majorization}

For any $n \geq 2$, \emph{majorization} \cite{marshall1979inequalities,martin2006entropy,gottwald2019bounded} consists of the pair $(\Lambda^n,\preceq_M)$, where
\begin{equation}
\label{majorization}
\begin{split}
    \Lambda^n \coloneqq \bigg\{ x\in[0,1]^n \bigg| &\sum_{i=1}^n x_i = 1 \text{ and } (\text{for all } i<n)\text{ } x_i \geq x_{i+1} \bigg\}, \\
    x \preceq_M y &\iff (\text{for all } i<n)\text{ } s_i(x) \leq s_i(y),
    \end{split}
\end{equation}
and $s_i(x) \coloneqq \sum_{j=1}^i x_j$ $\text{for all } i<n$. Note majorization has an effective weak basis, as the following proposition (which we prove in the Appendix \ref{proof prop 1}) states.

\begin{prop}
\label{majo eff weak basis}
If $n\geq 2$, then $\mathbb{Q}^n \cap \Lambda^n$ is an effective weak basis for majorization.
\end{prop}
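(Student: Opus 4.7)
The plan is to verify the three ingredients of an effective weak basis for $B := \mathbb{Q}^n \cap \Lambda^n$. First, $B$ admits a finite map because $\mathbb{Q}^n$ can be effectively enumerated and membership in $\Lambda^n$ is decidable by rational arithmetic (non-negativity, monotonicity and the normalisation $\sum x_i = 1$). Second, the relation $y \preceq_M y'$ reduces to the rational comparisons $s_i(y) \leq s_i(y')$ for $i = 1, \ldots, n-1$ and is therefore decidable on $B$, so $\{\langle n, m \rangle : b_n \preceq_M b_m\}$ is recursively enumerable and one may let the computable function $f$ in the effectivity definition enumerate the entire relation; the $f$-condition on $p$ then reduces to the bare requirement $b_n, b_m \prec b_p$.

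The main work is the weak basis property. I first verify $(\Lambda^n, \preceq_M)$ is a dcpo: for directed $D$, the coordinate suprema $\sigma_i := \sup_{d \in D} s_i(d)$ exist, and combining elements via directedness yields $\sigma_{i-1} + \sigma_{i+1} \leq 2 \sigma_i$, so the element with partial sums $(\sigma_i)$ lies in $\Lambda^n$ and is $\sqcup D$. For $x \in \Lambda^n$ I set $B_x := \{y \in B : y \preceq_M x\}$ and show directedness by a concave-envelope construction: given $y, z \in B_x$, the upper concave envelope $\bar{s}$ of the finite point set $\{(i, \max(s_i(y), s_i(z)))\}_{i=0}^n$ is rational (its breakpoints lie among the input data, and linear interpolation at integer positions between rational heights is rational), concave, and satisfies $\bar{s} \leq s(x)$ because $s(x)$ is itself a concave majorant of $\max(s(y), s(z))$. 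Hence the element of $B$ with partial sums $\bar{s}$ lies in $B_x$ and dominates both $y$ and $z$.

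To show $\sqcup B_x = x$ I use a \emph{tent} approximation coordinatewise. Concavity of $s(x)$ yields $s_i(x) \geq i/n$; fixing $i$ and $\epsilon > 0$, pick a rational $q \in (\max(i/n, s_i(x) - \epsilon), s_i(x)]$, with $q = i/n$ in the degenerate case $s_i(x) = i/n$. The two-slope sequence $\tau_j := (j/i) q$ for $j \leq i$ and $\tau_j := q + ((j-i)/(n-i))(1-q)$ for $j \geq i$ is rational, concave (its slopes $q/i$ and $(1-q)/(n-i)$ are non-increasing iff $q \geq i/n$), and dominated by $s(x)$ because $s(x)$ sits above the two chords from $(0,0)$ to $(i, s_i(x))$ and from $(i, s_i(x))$ to $(n,1)$. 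This yields an element of $B_x$ whose $i$-th partial sum exceeds $s_i(x) - \epsilon$, so the coordinate suprema of $B_x$ coincide with those of $x$ and $\sqcup B_x = x$. The strict-dominance clause I handle in two cases: if $x \in B$ then $x$ itself is a strict upper bound of any $y, z \in B_x \setminus \{x\}$; otherwise, any envelope upper bound $w$ that happens to coincide with $y$ or $z$ (occurring only when they are $\preceq_M$-comparable) is upgraded to the envelope of $\{w, w'\}$ with $w'$ a tent approximation improving $w$ at some coordinate where $s_i(w) < s_i(x)$. I expect the main obstacle to be the degenerate regime $s_i(x) = i/n$, which pins down a linear segment of $s(x)$ and forces the tent to collapse to a linear interpolation, so those coordinates must be separated from the generic concavity argument.
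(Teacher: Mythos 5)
Your proposal is correct, but it reaches the weak-basis property by a genuinely different route than the paper. The paper's proof rests on Lemma \ref{arbitrary q}, a \emph{simultaneous} strict approximation result ($s_k(x)-\varepsilon < s_k(q) < s_k(x)$ for all $k<n$ at once) established by a fairly intricate two-stage $\varepsilon$-construction, and then takes as $B_x$ the rational \emph{way-below} set $\{q \in \mathbb{Q}^n \cap \Lambda^n \mid s_k(q)<s_k(x) \ \forall k<n\}$, from which directedness, the strict-dominance clause of effectivity, and $\sqcup B_x = x$ all follow directly. You instead take the full rational down-set of $x$, obtain directedness structurally via the least concave majorant of the pointwise maximum of partial sums (in effect, the join of the majorization lattice), and obtain $\sqcup B_x = x$ by one-coordinate tent approximations, paying for the non-strict down-set with the extra ``upgrade'' step for the strict-dominance clause --- which you handle correctly, since any $w \in B_x \setminus \{x\}$ with $x \notin \mathbb{Q}^n$ has some $i$ with $s_i(w)<s_i(x)$, and the envelope of $w$ with a tent at that coordinate strictly dominates $w$. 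Your effectivity ingredients match the paper's in substance (decidability of $\preceq_M$ by rational comparisons; the paper additionally writes out an explicit finite map via a lexicographic enumeration, whereas you argue its existence from effective enumerability of $\mathbb{Q}^n$ plus decidable membership in $\Lambda^n$, which is acceptable given that ``finite map'' is an informal effective notion), and your dcpo verification via coordinate suprema is a bonus the paper takes as known. What each approach buys: the paper's Lemma \ref{arbitrary q} is stronger than what the weak-basis statement needs and is reused elsewhere (it upgrades $B$ to an effective \emph{basis} via the characterization \eqref{way-below majo}, and reappears in the proof of Proposition \ref{counter finite lsc}), while your argument is more conceptual and self-contained for this proposition alone, though it only delivers coordinatewise approximation; recovering the simultaneous strict version would require joining tents with strictly chosen heights (which your envelope machinery could in fact do, since the envelope of points strictly below the concave curve $s(x)$ at interior abscissae stays strictly below it there). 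One last simplification you missed: the ``degenerate regime'' $s_i(x)=i/n$ at a single $0<i<n$ never occurs for $x \neq \perp$, because concavity together with $s_0(x)=0$, $s_n(x)=1$ and one interior contact with the chord forces $s_j(x)=j/n$ for all $j$, i.e.\ $x=\perp$; so your separate treatment of that case, while harmless, is needed only for the bottom element, where $B_\perp=\{\perp\}$ settles everything.
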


Proposition \ref{majo eff weak basis} relies on a stronger property fulfilled by majorization, which we state in Lemma \ref{arbitrary q} and prove in the Appendix \ref{proof lemma majo}.

\begin{lem}
\label{arbitrary q}
If $x \in \Lambda^n\setminus \{\perp\}$, then $\text{for all } \varepsilon >0$ there exists some $q \in \mathbb{Q}^n \cap \Lambda^n$ such that $s_k(x)-\varepsilon < s_k(q) < s_k(x)$ $\text{for all } k<n$.
\end{lem}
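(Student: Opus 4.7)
The plan rests on a strictness property derived from $x \neq \perp$ together with a standard density argument. First, I would establish that $x \neq \perp$ implies $s_k(x) > k/n$ strictly for every $k < n$. Since $\perp = (1/n, \dots, 1/n)$ is the least element of $(\Lambda^n, \preceq_M)$, the inequality $s_k(x) \geq k/n$ is automatic; if equality held for some $k < n$, then the averages of $x_1, \dots, x_k$ and of $x_{k+1}, \dots, x_n$ would both equal $1/n$, and combined with the monotonicity $x_1 \geq \cdots \geq x_n$ this forces $x_i = 1/n$ for all $i$, contradicting $x \neq \perp$.

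Next I would produce a real-valued witness by interpolating toward $\perp$: define $y^{(\lambda)} := (1-\lambda)\, x + \lambda \perp \in \Lambda^n$ for $\lambda \in (0,1)$. Linearity of $s_k$ gives
\begin{equation*}
s_k(x) - s_k\bigl(y^{(\lambda)}\bigr) = \lambda\bigl(s_k(x) - k/n\bigr),
\end{equation*}
which is strictly positive for $k < n$ by the previous step and bounded above by $\lambda$. Any $\lambda \in (0, \varepsilon)$ therefore places $y^{(\lambda)}$ in the relatively open subset $U := \{y \in \Lambda^n : s_k(x) - \varepsilon < s_k(y) < s_k(x) \text{ for all } k < n\}$ of $\Lambda^n$.

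Finally, I would replace $y^{(\lambda)}$ by a rational point still lying in $U$. For this I would invoke density of $\mathbb{Q}^n \cap \Lambda^n$ in $\Lambda^n$, which follows from $\Lambda^n$ being a rational polytope whose vertices $v_k = (\underbrace{1/k, \dots, 1/k}_{k}, 0, \dots, 0)$ for $k = 1, \dots, n$ are rational: approximating the convex-combination coefficients of any $y \in \Lambda^n$ by rationals in the standard simplex produces a rational approximant in $\Lambda^n$ arbitrarily close to $y$. Choosing such an approximant inside the open neighborhood of $y^{(\lambda)}$ contained in $U$ yields the desired $q$. I expect the only step needing genuine care to be the strictness $s_k(x) > k/n$; once that is in hand, the convex interpolation and density arguments are both routine.
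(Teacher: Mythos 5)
Your proof is correct, and it takes a genuinely different route from the paper's. The paper proves Lemma \ref{arbitrary q} by an explicit, case-split construction: it first treats $x = (\frac{1}{m},\dots,\frac{1}{m},0,\dots,0)$, then general $x$ with no trailing zeros ($h=n$), then $x$ with trailing zeros ($h<n$), in each case hand-building a rational $q$ by perturbing the coordinates of $x$ with carefully tuned rational parameters $\varepsilon'$, $\beta$, $\beta'$ and verifying the monotonicity constraints, the normalization $s_n(q)=1$, and the inequalities $s_k(x)-\varepsilon < s_k(q) < s_k(x)$ block by block. You instead exploit convexity: the strictness $s_k(x) > k/n$ for all $k<n$ when $x \neq \perp$ (your averaging argument here is sound --- equality at some $k$ forces $x_k = x_{k+1} = 1/n$ and then both blocks collapse, giving $x=\perp$), the segment $y^{(\lambda)} = (1-\lambda)x + \lambda\perp$ with $s_k(x) - s_k(y^{(\lambda)}) = \lambda\big(s_k(x)-k/n\big) \in (0,\lambda)$, and density of $\mathbb{Q}^n \cap \Lambda^n$ in the relatively open set $U$. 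Your density step can even be made fully explicit, since every $x \in \Lambda^n$ equals $\sum_{k=1}^n \mu_k v_k$ with $\mu_k = k(x_k - x_{k+1})$ (setting $x_{n+1} \coloneqq 0$), so rational approximation of $\mu$ in the standard simplex produces the approximant directly. What each approach buys: yours is shorter, uniform (no cases), and makes the geometric reason for the lemma transparent, namely that $x \neq \perp$ leaves strict slack in every partial sum which interpolation toward $\perp$ can consume; the paper's construction exhibits $q$ by explicit formulas from $x$ and $\varepsilon$, which fits the paper's computability theme, although none of the places where the lemma is invoked (Proposition \ref{majo eff weak basis}, Lemma \ref{example domains}, Proposition \ref{counter finite lsc}) actually requires the witness to be produced effectively, so your nonconstructive route suffices for all of them.
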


\section{Order density and weak bases}
\label{density in compu}

In the approach to computability from Section \ref{order in compu}, dcpos with countable weak bases are the fundamental structure allowing to define computability. In this section, we relate order density properties to countable weak bases. In order to do so, 
we need some extra terminology. If $P$ is a partial order and $x,y \in P$, we say $y$ is \emph{strictly preferred} to $x$ or $x$ is \emph{strictly below} $y$ for $x,y \in P$ and denote it by $x \prec y$ if $x \preceq y$ and $\neg(y \preceq x)$ hold. In case we have $\neg(x \preceq y)$ and $\neg(y \preceq x)$, we say $x$ and $y$ are \emph{incomparable} and denote it by $x \bowtie y$.
We introduce now two order density properties which will play a major role in the following.

\begin{defi}[Order density properties]
A subset $D \subseteq P$ of a partial order $P$ is \emph{Debreu dense} if, for any $x,y \in P$ such that $x \prec y$, there exists some $d \in D$ such that $x \preceq d \preceq y$. We say $P$ is \emph{Debreu separable} if there exists a countable Debreu dense subset $D \subseteq P$ \cite{debreu1954representation,bridges2013representations}. Moreover, we say $D \subseteq P$ is \emph{Debreu upper dense} if, $\text{for all } x,y \in P$ such that $x \bowtie y$, there exists some $d \in D$ such that $x \bowtie d \preceq y$ \cite{hack2022representing}. Lastly, we say $P$ is \emph{Debreu upper separable} if there exists a countable subset $D \subseteq P$ which is Debreu dense and Debreu upper dense \cite{hack2022representing}.
\end{defi}

To exemplify the previous properties, note that $\Sigma^*$ is a countable Debreu dense subset of the Cantor domain $\Sigma^\infty$ since, given $a,b \in \Sigma^\infty$ such that $a \prec_C b$, we have $a \in \Sigma^*$ and, taking $d=a$, we get $a \preceq_C d \preceq_C b$.  Moreover, note that $\Sigma^*$ is a countable Debreu upper dense subset of the Cantor domain since, given $a,b \in \Sigma^\infty$ such that $a \bowtie b$, we can either pick $d=b$ if $b \in \Sigma^*$ or some
$d \in \Sigma^*$ such that $a \bowtie d \prec b$ if $b \in \Sigma^\omega$. Lastly, in particular, the Cantor domain is Debreu upper separable.

As a first relation between order density and weak bases, we show, in Proposition \ref{weak basis implications}, that
having a countable weak basis implies there is a countable Debreu upper dense subset and also a countable set which fulfills a weak form of Debreu density. In fact, although we are primarily interested in the countable case, we show a more general relation, where countability plays no role.

\begin{prop}
\label{weak basis implications}
If $P$ is a dcpo and $B \subseteq P$ is a weak basis, then $B$ is a Debreu upper dense subset. Furthermore, if $x \prec y$, then there exists some $b \in B$ such that $b \preceq y$ and either $x \prec b$ or $x \bowtie b$ holds.
    \end{prop}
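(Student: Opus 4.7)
The plan is to prove both statements by the same mechanism: given the target element on the right, take a directed set in $B$ whose supremum is that element, and exploit the defining property of the supremum to locate a basis element that avoids lying entirely below $x$.

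First I would handle the Debreu upper dense claim. Fix $x,y \in P$ with $x \bowtie y$, and use the weak basis hypothesis to choose a directed set $B_y \subseteq B$ with $\sqcup B_y = y$. Every $b \in B_y$ already satisfies $b \preceq y$, so it only remains to find one with $x \bowtie b$. Argue by contradiction: if every $b \in B_y$ is comparable to $x$, then either (i) some $b_0 \in B_y$ satisfies $x \preceq b_0$, in which case $x \preceq b_0 \preceq y$ contradicts $x \bowtie y$; or (ii) every $b \in B_y$ satisfies $b \preceq x$, which makes $x$ an upper bound of $B_y$ and forces $y = \sqcup B_y \preceq x$, again contradicting $x \bowtie y$. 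Hence some $b \in B_y \subseteq B$ witnesses $x \bowtie b \preceq y$.

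For the second, stronger statement, suppose $x \prec y$ and again pick a directed $B_y \subseteq B$ with $\sqcup B_y = y$. I would observe that the negation of ``$x \prec b$ or $x \bowtie b$'' is equivalent to $b \preceq x$: indeed, the only way $b$ fails both $x \prec b$ and $x \bowtie b$ is to satisfy $b \preceq x$ (the cases $b = x$ and $b \prec x$ are both subsumed here, while $x \prec b$ is ruled out by assumption). So if no $b \in B_y$ satisfied the required alternative, every $b \in B_y$ would lie below $x$, making $x$ an upper bound of $B_y$ and forcing $y = \sqcup B_y \preceq x$, which together with $x \preceq y$ contradicts $x \prec y$.

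There isn't really a hard step; the only subtlety is the case analysis in the second part, where one must be careful to verify that the negation of the disjunction $x \prec b \ \vee\ x \bowtie b$ is exactly $b \preceq x$, so that the supremum argument closes cleanly. Both parts reduce to the elementary observation that the least upper bound of a directed set in $B$ cannot be dominated by $x$ unless $x$ dominates each member of the set.
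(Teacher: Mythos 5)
Your proof is correct and follows essentially the same route as the paper: in both parts you take a directed set $B_y \subseteq B$ with $\sqcup B_y = y$ and observe that $b \preceq x$ for all $b \in B_y$ would force $y = \sqcup B_y \preceq x$, contradicting $x \bowtie y$ (resp.\ $x \prec y$), so some $b_0 \in B_y$ satisfies $\neg(b_0 \preceq x)$, after which the remaining case analysis ($x \preceq b_0$ impossible in the first part; $x \prec b_0$ or $x \bowtie b_0$ in the second) matches the paper's. Your explicit check that the negation of ``$x \prec b$ or $x \bowtie b$'' is exactly $b \preceq x$ is a fine way to package the same observation the paper makes implicitly.
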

    
    \begin{proof}
    For the first statement consider $x,y \in P$ such that $x\bowtie y$. By definition of weak basis, there exists $B_y \subseteq B$ such that $\sqcup B_y = y$. If we have $b \preceq x$ $\text{for all } b \in B_y$, then, by definition of supremum, we would have $y \preceq x$, a contradiction. There exists, thus, $b_0 \in B_y$ such that $\neg(b_0 \preceq x)$. Moreover, $x \preceq b_0$ also leads to contradiction as we would have, by transitivity, $x \preceq y$. Thus, we have $x \bowtie b_0 \preceq y$ and $B$ is a Debreu upper dense subset of $P$. For the second statement, consider $x,y \in P$ such that $x \prec y$ and $B_y \subseteq B$ such that $\sqcup B_y = y$. Notice $b \preceq y$ $\text{for all } b \in B_y$ by definition while $b \preceq x$ $\text{for all } b \in B_y$ implies $y \preceq x$, a contradiction. There exists, thus, $b_0 \in B_y$ such that $\neg(b_0 \preceq x)$. Then, $((x \prec b_0)\cup (x \bowtie b_0))\cap(b_0 \preceq y)$ holds.
    \end{proof}
    
Because of the proof of Proposition \ref{weak basis implications}, it seems having a countable weak basis is insufficient for a countable Debreu dense subset to exist. This is indeed the case, as we show in Proposition \ref{weak basis not debreu sep} via a counterexample.
    
    \begin{prop}
    \label{weak basis not debreu sep}
    There exist dcpos with countable weak bases and no countable Debreu dense subset.
    \end{prop}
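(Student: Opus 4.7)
The plan is to exhibit an explicit counterexample. I would take $P = \{0,1\} \times [0,1]$ endowed with the componentwise order, i.e., $(a,s) \preceq (b,t)$ iff $a \leq b$ and $s \leq t$. That $P$ is a dcpo is immediate: every directed subset has least upper bound equal to its coordinatewise supremum, and this supremum lives in $P$ because each coordinate varies in an order-complete interval.

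Next I would check that $B = \{0,1\} \times (\mathbb{Q} \cap [0,1])$ is a countable weak basis. For any $(a,s) \in P$, the set $B_{(a,s)} = \{(a,q) \in B : q \leq s\}$ is totally ordered (hence directed) and has supremum $(a,s)$ by density of the rationals in $[0,1]$; clearly $B_{(a,s)} \subseteq B$, and $B$ is countable.

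The substantive step is showing that no countable Debreu dense subset exists. The key observation is that for every $r \in [0,1]$ one has $(0,r) \prec (1,r)$ in $P$, and any candidate witness $d = (e,f) \in P$ for this pair, i.e., $(0,r) \preceq (e,f) \preceq (1,r)$, must satisfy $r \leq f \leq r$, hence $f = r$. Therefore a Debreu dense $D \subseteq P$ must contain, for each $r \in [0,1]$, some element with second coordinate exactly $r$. These witnesses are pairwise distinct across different $r$, so $|D| \geq \mathfrak{c}$ and in particular $D$ is uncountable.

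There is no real obstacle beyond locating the right example: all three verifications are routine once the example is in hand. The conceptual point being exploited is that a product order manufactures uncountably many ``tightly sandwiched'' strictly comparable pairs, each of which consumes its own Debreu witness, whereas the weak basis only has to generate each element as a supremum and is not forced to sit between comparable pairs. This precisely shows why Proposition~\ref{weak basis implications} cannot be strengthened from Debreu upper density to Debreu density. An essentially identical counterexample can be formulated with $(\mathcal{P}(\mathbb{N}),\subseteq)$, using the countable weak basis of finite sets together with pairs $(A, A \cup \{0\})$ for infinite $A \subseteq \mathbb{N}\setminus\{0\}$, should a purely combinatorial version be preferred.
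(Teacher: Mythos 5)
Your proposal is correct and takes essentially the same approach as the paper: your $P=\{0,1\}\times[0,1]$ with the componentwise order is order-isomorphic to the paper's counterexample $[0,1]\cup[2,3]$ (via $x\mapsto(0,x)$ and $x+2\mapsto(1,x)$), with the same countable weak basis of rational points. The uncountability argument is likewise identical in substance — each strictly comparable pair $(0,r)\prec(1,r)$ (the paper's $x\prec x+2$) forces a Debreu witness whose ``value coordinate'' equals $r$, and these witnesses are pairwise distinct.
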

    
    \begin{proof}
    Consider $P \coloneqq ([0,1]\cup [2,3],\preceq)$ where
    \begin{equation*}
    x \preceq y \iff
    \begin{cases}
    x\leq y \text{ and } x,y \in [0,1], \text{ or}\\
    x \leq y  \text{ and } x,y \in [2,3], \text{ or}\\ x+2 \leq y.
    \end{cases}
\end{equation*}
(See Figure \ref{figure} for a representation of $P$.) Since $P$ is clearly a partial order, we show now it is directed complete. Take $A \subseteq P$ directed. If we have either $A \subseteq [0,1]$ or $A \subseteq [2,3]$ then $\sqcup A$ exists and is the supremum of $A$ in the usual $\mathbb{R},\leq)$ sense. If $A$ has elements in both, then it is easy to see that the supremum of $A \cap [2,3]$ with respect to $\leq$ is the supremum of $A$ with respect to $\preceq$.
Analogously, $B \coloneqq \mathbb{Q} \cap P$ is a countable weak basis. To conclude, assume $D \subseteq P$ is Debreu dense set. There exist, then, $d_x \in D$ such that $x \preceq d_x \preceq x+2$ $\text{for all } x \in [0,1]$. By definition, we have $d_x \in \{x,x+2\}$.
In particular, $d_x \neq d_y$ $\text{for all } x,y \in [0,1]$ $x \neq y$ and $D$ is uncountable.
    \end{proof}
    
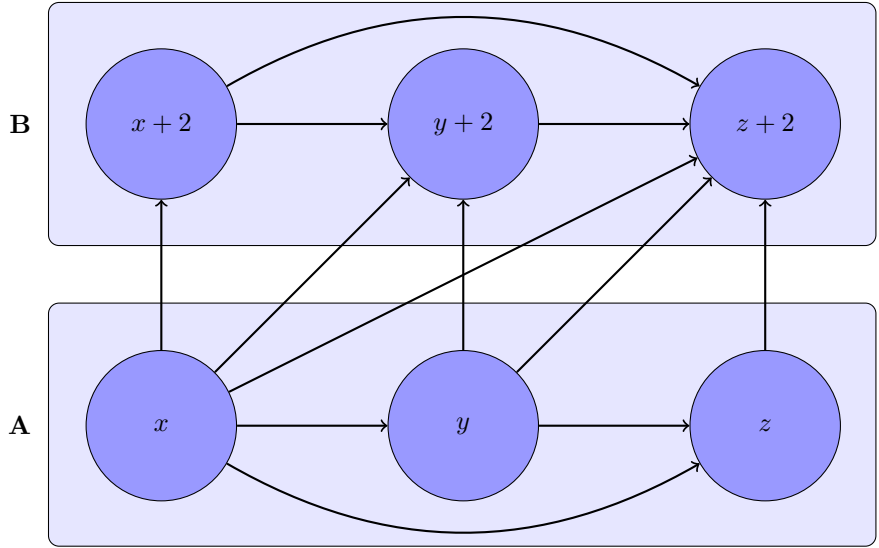
\begin{figure}[!tb]
\centering
\begin{tikzpicture}
\node[rounded corners, draw,fill=blue!10, text height = 3cm, minimum width = 11cm,xshift=4cm,label={[anchor=west,left=.1cm]180:\textbf{B}}] {};
\node[rounded corners, draw,fill=blue!10, text height = 3cm, minimum width = 11cm,xshift=4cm,yshift=-4cm,label={[anchor=west,left=.1cm]180:\textbf{A}}] {};
    \node[main node] (1) {$x+2$};
    \node[main node] (2) [right = 2cm  of 1]  {$y+2$};
    \node[main node] (3) [right = 2cm  of 2]  {$z+2$};
    \node[main node] (4) [below = 2cm  of 1] {$x$};
    \node[main node] (5) [right = 2cm  of 4] {$y$};
    \node[main node] (6) [right = 2cm  of 5] {$z$};

    \path[draw,thick,->]
    (4) edge node {} (2)
    (4) edge node {} (3)
    (5) edge node {} (3)
    (4) edge node {} (5)
    (5) edge node {} (6)
    (1) edge node {} (2)
    (2) edge node {} (3)
    (4) edge node {} (1)
    (6) edge node {} (3)
    (5) edge node {} (2)
        (1) edge [bend left] node {} (3)
    (4) edge [bend right] node {} (6)
    ;
\end{tikzpicture}
\caption{Representation of a dcpo, defined in Proposition \ref{weak basis not debreu sep}, with a countable weak basis and no countable Debreu dense subset. In particular, we show $A \coloneqq [0,1]$, $B \coloneqq [2,3]$ and how $x,y,z \in A$, $x<y<z$, are related to $x+2,y+2,z+2 \in B$. Notice an arrow from an element $w$ to an element $t$ represents $w \prec t$.}
\label{figure}
\end{figure}
    
    We consider now the converse of Proposition \ref{weak basis implications}, that is, whether countable weak bases exist for any Debreu separable dcpo. Some extra terminology is needed for this purpose.

    \begin{defi}[Trivial directed sets and elements]
\label{trivial elements}
    We say $x \in P$ has a \emph{non-trivial} directed set if there exists a directed set $A \subseteq P$ such that $\sqcup A=x$ and $x \not \in A$. Accordingly, given a weak basis $B \subseteq P$, we call the set of elements which have non-trivial directed sets $A \subseteq B$ the \emph{non-trivial} elements of $B$ and denote it by $\mathcal{N}_B$. We define the \emph{trivial} elements of $B$ equivalently and denote them by $\mathcal{T}_B$. In case we take $B=P$ as weak basis, we refer to the trivial (non-trivial) elements of $B$ simply as trivial (non-trivial) elements.
    \end{defi}

    Intuitively, non-trivial (trivial) elements are those for which there is (there is not) a computational process which converges to them without ever actually outputting them. This is important since we may have some element that requires infinite precision, like say $\pi$ in decimal representation, which may be achievable to arbitrary precision via some algorithm which only outputs elements with a finite representation. 
    
    We prove, in Theorem \ref{thm 1} that, if $P$ is a dcpo with a weak basis $B\subseteq P$ and a countable Debreu dense subset, then there exists a dcpo $Q$ with a countable weak basis such that  for the non-trivial set of $B$ is included in $Q$, $\mathcal{N}_B \subseteq Q$. In order to do so, we first need two lemmas. We start, in Lemma \ref{always next}, recalling the straightforward fact that, whenever the supremum of a directed set $A$ is not contained in the set, we can find $\text{for each } a \in A$ some $b \in A$ that is strictly preferred to $a$, $a \prec b$ (see \cite[Lemma 1]{hack2022computation}). Right after, in Lemma \ref{countable covers}, we show that, if $x \in P$ has a non-trivial directed set $A \subseteq P$ and there exists a countable Debreu dense subset $D\subseteq P$, then $x$ has a non-trivial increasing sequence contained in $D$.  In order to do so, we first consider the elements from $D$ which are \emph{between} (in $\preceq$) those in $A$ and then profit from the countability of $D$ to build an increasing sequence converging to the supremum of $A$. 
    
\begin{lem}[\cite{hack2022computation}]
\label{always next}
If $P$ is a dcpo and $A \subseteq P$ is a directed set such that $\sqcup A \notin A$, then, for all $a \in A$, there exists some $b \in A$ such that $a \prec b$.
\end{lem}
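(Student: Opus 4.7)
The plan is to argue by contradiction. Fix $a \in A$ and suppose no $b \in A$ satisfies $a \prec b$. I will derive that $a$ must itself be the least upper bound of $A$, which contradicts the hypothesis $\sqcup A \notin A$.

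The first step is to show that under this assumption $a$ is an upper bound of $A$. Given an arbitrary $b \in A$, directedness of $A$ supplies some $c \in A$ with $a \preceq c$ and $b \preceq c$. Since $\neg(a \prec c)$ by assumption, and $a \preceq c$ holds, unfolding the definition of $\prec$ forces $c \preceq a$; antisymmetry then gives $c = a$, whence $b \preceq a$. So every element of $A$ is below $a$.

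The second step closes the contradiction. Because $a$ is now an upper bound of $A$, the definition of least upper bound yields $\sqcup A \preceq a$. Conversely, $a \in A$ gives $a \preceq \sqcup A$. Antisymmetry delivers $a = \sqcup A$, contradicting $\sqcup A \notin A$. I do not expect any real obstacle: the entire argument rests on unpacking the definitions of $\prec$, directedness, and least upper bound, with antisymmetry invoked twice.
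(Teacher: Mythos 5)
Your proof is correct. Note that this paper does not actually prove Lemma \ref{always next}: it is recalled from the companion paper (cited there as \cite[Lemma 1]{hack2022computation}), so there is no in-paper argument to compare against line by line. Your contradiction argument is sound: fixing $a$ and assuming no $b \in A$ satisfies $a \prec b$, directedness gives $c \in A$ above both $a$ and an arbitrary $b \in A$; then $\neg(a \prec c)$ together with $a \preceq c$ forces $c \preceq a$ by the definition of $\prec$, so $b \preceq a$, making $a$ an upper bound of $A$; since $\sqcup A$ exists ($P$ is a dcpo and $A$ is directed), $\sqcup A \preceq a \preceq \sqcup A$ yields $a = \sqcup A \in A$, the desired contradiction. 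Two cosmetic remarks. First, the appeal to antisymmetry at $c = a$ is dispensable: $b \preceq c \preceq a$ already gives $b \preceq a$ by transitivity. Second, the argument can be run directly rather than by contradiction, in the style this paper itself uses in the proof of Proposition \ref{weak basis implications}: since $\sqcup A \notin A$, the element $a$ cannot be an upper bound of $A$ (otherwise $\sqcup A \preceq a$ and $a \preceq \sqcup A$ would put $\sqcup A = a \in A$), so there is $b' \in A$ with $\neg(b' \preceq a)$; directedness then supplies $b \in A$ with $a \preceq b$ and $b' \preceq b$, and $\neg(b \preceq a)$ follows by transitivity, giving $a \prec b$. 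Both routes are essentially the same computation read in opposite directions; yours trades the explicit witness $b'$ for the global statement that $a$ would dominate $A$, which is equally rigorous.
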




\begin{lem}
\label{countable covers}
If $P$ is a dcpo with a countable Debreu dense subset $D\subseteq P$, $x \in P$ is an element with a non-trivial directed set $A \subseteq P$, and
\begin{equation}
\label{def D_A}
D_A:=\{d \in D| \exists a,b \in A \text{ s.t. } a \preceq d \preceq b \}
\end{equation}
is a subset of $D$ with a numeration $D_A=(d_n)_{n\geq 0}$,
then $(d'_n)_{n\geq 0}$ is a non-trivial increasing sequence for $x$, where
\begin{equation}
\label{def d'_n}
\begin{split}
    d_0' &\coloneqq d_0,\\
    d_n' &\coloneqq d_{m_n} \text{ for all } n \geq 1,
    \end{split}
\end{equation}
and $m_n \geq 0$ fulfills $d_{n-1}' \preceq d_{m_n}$ and $d_n \preceq d_{m_n}$ for all  $n \geq 1$.
\end{lem}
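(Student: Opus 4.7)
The plan is to verify, in order, four things about the sequence $(d_n')_{n\geq 0}$: well-definedness of the indices $m_n$, monotonicity, the supremum identity $\sqcup_n d_n' = x$, and non-triviality ($x \notin \{d_n'\}$). Throughout I will repeatedly exploit the fact that, since $A$ is directed with $\sqcup A = x \notin A$, Lemma \ref{always next} lets me find, above any element of $A$, a strictly greater element of $A$; combined with Debreu density of $D$, this will produce fresh elements of $D_A$ whenever I need them.

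For well-definedness, I would argue as follows. Given $d_{n-1}', d_n \in D_A$, pick witnesses $b_1, b_2 \in A$ with $d_{n-1}' \preceq b_1$ and $d_n \preceq b_2$. By directedness of $A$, there is $b_3 \in A$ above both. By Lemma \ref{always next}, there is $b_4 \in A$ with $b_3 \prec b_4$, and then by Debreu density of $D$ there is $d \in D$ with $b_3 \preceq d \preceq b_4$. Because $b_3, b_4 \in A$, the element $d$ lies in $D_A$, so $d = d_{m_n}$ for some $m_n \geq 0$, and this $m_n$ satisfies the stated inequalities. Monotonicity is then immediate from the defining inequality $d_{n-1}' \preceq d_{m_n} = d_n'$.

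The main obstacle is verifying $\sqcup_n d_n' = x$, in particular the minimality of $x$ among upper bounds. Upper-boundness is easy: each $d_n'$ is in $D_A$, so $d_n' \preceq b$ for some $b \in A \preceq x$. For the minimality step I would pick an arbitrary upper bound $c$ of $(d_n')$ and show $a \preceq c$ for every $a \in A$, which yields $x = \sqcup A \preceq c$. The key observation is that by the defining property of $m_n$ one has $d_n \preceq d_{m_n} = d_n'$ (and $d_0 = d_0'$), so $d_n \preceq c$ for all $n$ as well. Now for $a \in A$, applying Lemma \ref{always next} produces $a' \in A$ with $a \prec a'$, and Debreu density gives some $d \in D$ with $a \preceq d \preceq a'$; this $d$ belongs to $D_A$, so $d = d_k$ for some $k$, and hence $a \preceq d_k \preceq c$, as required.

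Finally, non-triviality is the short coda: if $d_n' = x$ for some $n$, then choosing $b \in A$ with $d_n' \preceq b$ gives $x \preceq b$, while $\sqcup A = x$ gives $b \preceq x$, so $b = x \in A$, contradicting the non-triviality of $A$. Putting these four pieces together yields the lemma.
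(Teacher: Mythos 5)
Your proposal is correct and takes essentially the same route as the paper's proof: both produce the index $m_n$ (equivalently, directedness of $D_A$) by combining Lemma \ref{always next} with Debreu density of $D$, and both establish $\sqcup (d'_n)_{n\geq 0} = \sqcup A = x$ by the same witness-chasing, using $d_k \preceq d_{m_k}$ to pass an arbitrary upper bound of the sequence up to an upper bound of $A$. The only organizational difference is that you verify non-triviality explicitly, where the paper simply asserts $x \notin D_A$ at the outset; this is the same idea, spelled out in more detail.
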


\begin{proof}
Take $x \in P$ and $A \subseteq P$ a non-trivial directed set for $x$.
Notice, since $\sqcup A \notin A$, then for every $a \in A$ there is some  $b \in A$ such that $a \prec b$ by Lemma \ref{always next}. Consider, also, the set $D_A$ defined in \eqref{def D_A}.
Notice $D_A$ is countable and $x \notin D_A$. We notice, first, $D_A$ is directed. Given $d, d' \in D_A$, there exist $b, b' \in A$ such that $d \preceq b$ and $d' \preceq b'$ by definition. Since $A$ is directed, there exists $c \in A$ such that $b \preceq c$ and $b' \preceq c$. Also, by Lemma \ref{always next}, there exists some $e \in D$ such that $c \prec e$. Thus, there exists $d''\in D_A$ such that $c\preceq d''$ by Debreu separability and, by transitivity, we have $d,d' \preceq d''$.
We conclude $D_A$ is directed. To finish, we will show that the increasing sequence $D_A'=(d'_n)_{n\geq 0} \subseteq D_A$ from \eqref{def d'_n} is well-defined and fulfills $\sqcup D_A'=x$. To show $D_A'$ is well-defined it suffices to notice that we can consider a numeration $D_A=(d_n)_{n\geq 0}$ since $D_A \subseteq D$ and that
$m_n$ exists $\text{for all } n \geq 1$ since, as we showed, $D_A$ is directed. Since $D_A'$ is an increasing sequence by construction, we only need to show $\sqcup D_A'=\sqcup A$. Notice $d'_n \preceq \sqcup A$ by definition. Moreover, given any $a \in A$, there exists some $n \geq 0$ such that $a \preceq d'_n$ by Lemma \ref{always next}, Debreu separability and definition of $D_A'$. In particular, if there is some $z \in P$ such that $d'_n \preceq z$ $\text{for all } n \geq 0$, then $a  \preceq z$ $\text{for all } a \in A$ and, as a result, $\sqcup A \preceq z$. Thus, $\sqcup D_A' = \sqcup A$.
\end{proof}

Note, as a consequence of Lemma \ref{countable covers}, whenever a dcpo $P$ is Debreu separable, any directed set $A \subseteq P$ contains an increasing sequence $(a_n)_{n\geq 0} \subseteq A$ such that $\sqcup (a_n)_{n\geq 0}=\sqcup A$ (see Proposition \ref{exists chain inside} in the Appendix \ref{appen proofs}).

Using Lemmas \ref{always next} and \ref{countable covers}, we can now prove Theorem \ref{thm 1}, our first construction of countable weak bases using order density properties. Intuitively, Theorem \ref{thm 1} considers $Q$, a subset of some dcpo $P$, and takes all the elements in some basis that can be achieved non-trivially and profits from the sequences in Lemma \ref{countable covers} to achieve them via a countable Debreu dense subset. Lastly, it adds elements to $Q$ in order to assure it is a dcpo. 



\begin{teo}
\label{thm 1}
If $P$ is a dcpo with a countable Debreu dense set $D\subseteq P$ and a weak basis $B\subseteq P$, then
the dcpo $(Q,\preceq)$ fulfills $D \cup \mathcal{N}_B \subseteq Q$ and has $D$ as countable weak basis, where
\begin{equation*}
    Q \coloneqq D \cup \{x \in P\setminus D| \exists \text{ a directed set } A \subseteq D \text{ s.t. } \sqcup A=x\}
\end{equation*}
and $\preceq$ is the restriction to $Q$ of the partial order in $P$.
\end{teo}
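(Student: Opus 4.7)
The plan is to verify the three assertions bundled in the statement: (a) $(Q,\preceq)$ is a dcpo, (b) $D \cup \mathcal{N}_B \subseteq Q$, and (c) $D$ is a countable weak basis for $Q$. That $\preceq$ restricted to $Q$ is a partial order is immediate.

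Part (c) is the cheapest. For $x \in D$ the singleton $\{x\} \subseteq D$ is trivially directed with $\sqcup \{x\}=x$; for $x \in Q \setminus D$, the defining property of $Q$ directly produces a directed $A \subseteq D$ with $\sqcup A = x$. Countability of $D$ is assumed. The only subtlety is that these suprema are computed in $P$; I would remark that if $x = \sqcup A$ in $P$ with $A \subseteq Q$ and $x \in Q$, then $x$ is also the least upper bound in $Q$, since every $Q$-upper bound of $A$ is a $P$-upper bound.

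For part (b), $D \subseteq Q$ holds by construction. For $\mathcal{N}_B \subseteq Q$, I would take $x \in \mathcal{N}_B$ and, by definition, a directed $A \subseteq B$ with $\sqcup A = x$ and $x \notin A$. If $x \in D$, then $x \in Q$; otherwise $A \subseteq P$ is a non-trivial directed set for $x$, and since $P$ admits the countable Debreu dense subset $D$, Lemma \ref{countable covers} furnishes a non-trivial increasing sequence $(d'_n)_{n\geq 0} \subseteq D$ with supremum $x$. Being directed, this sequence shows $x \in Q$ by the defining clause.

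Part (a) is the main work, but the argument follows the same pattern. Given a directed $C \subseteq Q$, I would set $x \coloneqq \sqcup C$ (which exists in $P$ since $P$ is a dcpo) and show $x \in Q$. If $x \in C$, then trivially $x \in Q$. Otherwise $C$ is a non-trivial directed set for $x$ in $P$, so Lemma \ref{countable covers}, applied to $C$ and $D$, delivers a directed subset of $D$ with supremum $x$, whence $x \in Q$. The upper-bound bookkeeping from part (c) then confirms $x$ is also the supremum of $C$ inside $Q$. The step I expect to be the main obstacle is precisely this verification that $Q$ is closed under directed suprema taken in $P$, but the key observation that makes it work is that Lemma \ref{countable covers} only requires the non-trivial directed set to live in $P$, not in $D$, so it applies equally well to directed sets in $Q$ that contain elements of $Q \setminus D$.
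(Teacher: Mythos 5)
Your proof is correct, but it takes a genuinely different route from the paper's on the main step, the directed completeness of $Q$. The paper does \emph{not} apply Lemma \ref{countable covers} to the directed set in $Q$ itself; instead it runs an explicit two-stage construction: it first enlarges the given directed set $A \subseteq Q$ to $A' \coloneqq A \cup \{d \in D \mid \exists a,b \in A \text{ s.t. } a \preceq d \preceq b\}$, then replaces every $x \in A'\setminus D$ by some directed $B_x \subseteq D$ with $\sqcup B_x = x$ (available by the definition of $Q$), and checks by a four-case analysis that the resulting set $A'' \subseteq D$ is directed with $\sqcup A'' = \sqcup A' = \sqcup A$, which certifies $\sqcup A \in Q$. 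Your observation --- that Lemma \ref{countable covers} only requires the non-trivial directed set to live in $P$, so it applies verbatim to $C \subseteq Q \subseteq P$ once the trivial case $\sqcup C \in C$ is split off --- collapses this construction to a one-line citation, and it is sound: the lemma returns a non-trivial increasing sequence in $D$ with supremum $\sqcup C$, which is exactly the certificate the defining clause of $Q$ asks for; your explicit remark that a $P$-supremum of a subset of $Q$ lying in $Q$ is also the $Q$-supremum (every $Q$-upper bound is a $P$-upper bound) supplies bookkeeping the paper leaves implicit. As for what each approach buys: yours is shorter, reuses a lemma already proved, and shows in passing that every element of $Q$ is reached by an increasing \emph{chain} in $D$, not merely a directed set; the paper's hand-made argument, by contrast, uses only Debreu density (via Lemma \ref{always next} and interpolation) and never the countability of $D$ in the completeness step, so it would prove directed completeness of $Q$ even for uncountable Debreu dense $D$ --- extra generality the theorem as stated does not exploit, since the inclusion $\mathcal{N}_B \subseteq Q$ goes through Lemma \ref{countable covers}, hence through countability, in both your proof and the paper's.
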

\begin{proof}
We will show $(Q,\preceq)$ is a dcpo with a countable weak basis such that $D \cup \mathcal{N}_B \subseteq Q$.
Note $\mathcal{N}_B \subseteq Q\setminus D$ by Lemma \ref{countable covers}. To conclude the proof, we only need to show $Q$ is directed complete as, after this is established, it is clear by definition that $D \subseteq Q$ is a countable weak basis of $Q$. Take, thus, a directed set $A \subseteq Q$ with $\sqcup A \not \in A$, as the opposite is straightforward. Note $\sqcup A \in P$, since $P$ is a dcpo, and we intend to show $\sqcup A \in Q$. To begin with, consider
\begin{equation*}
    A' \coloneqq A \cup \{d \in D| \exists a,b \in A \text{ s.t. } a \preceq d \preceq b\} \subseteq Q,
\end{equation*}
which is straightforwardly a directed set.
Note $\sqcup A'$ exists in $P$ and, actually, $\sqcup A'=\sqcup A$, as we have $(1)$ $\text{for all } a' \in A'$ there exists some $a \in A$ such that $a'\preceq a$, which leads to $a' \preceq \sqcup A$ $\text{for all } a' \in A$, and $(2)$ if $a'\preceq y$ $\text{for all } a'\in A'$ then $a \preceq y$ $\text{for all } a \in A$ and we get $\sqcup A \preceq y$. Consider now
 \begin{equation*}
A'' \coloneqq \Big(A' \cap D\Big) \bigcup_{x \in A'\setminus D} B_x, 
 \end{equation*}
 where, $\text{for all } x \in A'\setminus D$, $B_x \subseteq D$ is a directed set, which exists by definition of $Q$, such that $\sqcup B_x=x$. We conclude by showing that $(1)$ $A''$ is directed and that $(2)$ $\sqcup A''=\sqcup A$, which imply, since $A'' \subseteq D$, that $\sqcup A = \sqcup A'' \in Q$. As a result, $Q$ is directed complete, as we intended to prove. To show $(1)$, we take $a,b \in A''$ and
 consider four cases: (a) $a,b \in B_x$ for some $x \in A'\setminus D$, (b) $a,b \in A' \cap D$, (c) $a \in B_x, b \in B_{x'}$ with $x,x' \in A'\setminus D$ $x \neq x'$ and (d) $a \in A' \cap D$ and $b \in B_x$ for some $x \in A'\setminus D$. In (a) there exists some $c \in B_x$ such that $a,b \preceq c$ since $B_x$ is a directed set. (b) holds as there exists some $y \in A$ such that $a,b \preceq y$ and some $y' \in A$ such that $y \prec y'$ by Lemma \ref{always next}. We obtain there exists some $c \in A' \cap D$ such that $a,b\preceq y \preceq c \preceq y'$. (c) holds as there exists some $y \in A$ such that $x,x'\preceq y$ and we can follow (b) to get some $c \in A' \cap D$ such that $a \preceq x \preceq c$ and $b \preceq x' \preceq c$. (d) holds similarly to (c). In order to finish, we only need to show $(2)$ holds. Since $\text{for all } a'' \in A''$ there exists some $a' \in A'$ such that $a ''\preceq a'$, we have $a ''\preceq \sqcup A'=\sqcup A$ $\text{for all } a'' \in A''$. Moreover, if we have $a''\preceq z$ $\text{for all } a''\in A''$, then we have, by definition of $A''$, $a' \preceq z$ $\text{for all } a'\in A' \cap D$ and, by definition of $B_{a'}$, $a'\preceq z$ $\text{for all } a'\in A'\setminus D$. Thus, $\sqcup A=\sqcup A' \preceq z$ and, hence, $\sqcup A''=\sqcup A'=\sqcup A$.
\end{proof}

\begin{rem}[Implication for computability]
    By of Theorem \ref{thm 1}, we can define computable elements (in the sense of Definition \ref{def: comput ele}) on $D \cup \mathcal{N}_B \subseteq P$, where $D\subseteq P$ is a countable Debreu dense subset and $\mathcal{N}_B$ are the non-trivial elements of some weak basis $B \subseteq P$ (which may be uncountable), whenever $D$ is effective.
\end{rem}

 Theorem \ref{thm 1} has the following straightforward strengthening.

\begin{coro}
\label{coro 1}
If $P$ is a dcpo with a countable Debreu dense subset $D \subseteq P$ and $B \subseteq P$ is a weak basis, then the set of trivial elements of $B$, $\mathcal{T}_B$, is countable if and only $D \cup \mathcal{T}_B$ is a countable weak basis for $P$.
\end{coro}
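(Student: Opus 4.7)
The plan is to decompose $P$ as $\mathcal{T}_B \cup \mathcal{N}_B$ and handle the two pieces independently, leaning on Theorem~\ref{thm 1} for the non-trivial part. To justify the decomposition, I would observe that since $B$ is a weak basis, any $x \in P$ is the supremum of some directed $A \subseteq B$; if such an $A$ can be chosen non-trivially then $x \in \mathcal{N}_B$, and otherwise every such covering contains $x$, forcing $x \in B \setminus \mathcal{N}_B = \mathcal{T}_B$.

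The reverse implication is essentially free: $\mathcal{T}_B$ is a subset of $D \cup \mathcal{T}_B$, so if the latter is countable then so is the former.

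For the forward implication, I would assume $\mathcal{T}_B$ is countable. Then $D \cup \mathcal{T}_B$ is countable as a union of two countable sets, and it only remains to verify that it is a weak basis of $P$. Fix $x \in P$ and split into cases. If $x \in \mathcal{T}_B$, then $\{x\}$ is itself a directed subset of $D \cup \mathcal{T}_B$ whose supremum is $x$. If instead $x \in \mathcal{N}_B$, Theorem~\ref{thm 1} places $x$ in the dcpo $Q$; unpacking the definition of $Q$ given there, either $x \in D$ (so $\{x\} \subseteq D$ works) or $x = \sqcup A$ in $P$ for some directed $A \subseteq D \subseteq D \cup \mathcal{T}_B$. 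In either case $x$ is the supremum of a directed subset of $D \cup \mathcal{T}_B$, so the weak basis condition is met.

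I do not expect any genuine obstacle: Corollary~\ref{coro 1} is almost purely a bookkeeping consequence of Theorem~\ref{thm 1}, which (through Lemma~\ref{countable covers}) already supplies all the analytic content. The only point that deserves care is the partition $P = \mathcal{T}_B \cup \mathcal{N}_B$, which rests on the dichotomy between trivial and non-trivial coverings of a given $x$ by the weak basis $B$.
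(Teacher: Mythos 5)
Your proposal is correct and takes essentially the same route as the paper: Corollary \ref{coro 1} is presented there as a straightforward strengthening of Theorem \ref{thm 1} resting on Lemma \ref{countable covers}, and your appeal to the inclusion $\mathcal{N}_B \subseteq Q$ is exactly that content, since that inclusion is itself established via Lemma \ref{countable covers}. Your handling of the partition $P = \mathcal{T}_B \cup \mathcal{N}_B$ (noting $\mathcal{T}_B \subseteq B$ because $B$ is a weak basis) and the trivial reverse implication are both sound.
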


From the proof of Corollary \ref{coro 1}, which relies on Lemma \ref{countable covers}, Debreu separability alone seems to be insufficient to build a countable weak basis. This is precisely the case, as we show in Proposition \ref{cont Deb upper sep but not w-cont} via a counterexample. In fact, our counterexample shows even strengthening the hypothesis to Debreu upper separability is insufficient.

\begin{prop}
\label{cont Deb upper sep but not w-cont}
There exist Debreu upper separable dcpos without countable weak bases.
\end{prop}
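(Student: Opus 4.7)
The strategy is to build a dcpo $P$ in which every element is trivial in the sense of Definition~\ref{trivial elements}, while simultaneously exhibiting a countable subset that is both Debreu dense and Debreu upper dense. Triviality of every element forces any weak basis $B$ to satisfy $P \subseteq B$ (each witnessing directed set must contain its own supremum), and I will arrange $|P| = 2^{\aleph_0}$, ruling out countable weak bases. Concretely, identify $\mathbb{N}$ with $\{0,1\}^{<\omega}$ and, for each $r \in \{0,1\}^\omega$, let $A_r \subseteq \mathbb{N}$ be the set of finite prefixes of $r$; the family $\mathcal{F} := \{A_r\}_{r \in \{0,1\}^\omega}$ is an uncountable almost disjoint family of infinite subsets of $\mathbb{N}$, in particular an antichain in $(\mathcal{P}(\mathbb{N}), \subseteq)$ with $A \setminus A'$ infinite for any distinct $A, A' \in \mathcal{F}$. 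Let $P := \mathbb{N} \cup \mathcal{F} \cup \{\top\}$ with $\preceq$ declared so that $\mathbb{N}$ and $\mathcal{F}$ are internally antichains, $n \preceq A$ iff $n \in A$, and $\top$ is the maximum; reflexivity, antisymmetry and transitivity are routine.

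A case split on a directed set $D \subseteq P$ then shows every such $D$ has a maximum. If $D$ contains $\top$, or two distinct elements of $\mathcal{F}$, or a pair $n \in \mathbb{N}$, $A \in \mathcal{F}$ with $n \notin A$, then the only available upper bound in $P$ for the relevant pair is $\top$, which must therefore belong to $D$. Otherwise $D \subseteq \{n\}$ for a single $n \in \mathbb{N}$, or $D \subseteq \{A\} \cup A$ for a single $A \in \mathcal{F}$, with maximum $n$ or $A$. In every case $\sqcup D$ equals the maximum of $D$ and lies in $D$, so $P$ is a dcpo with $\mathcal{N}_P = \emptyset$ and $\mathcal{T}_P = P$. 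Consequently any weak basis of $P$ equals $P$, which is uncountable.

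Finally, set $D := \mathbb{N} \cup \{\top\}$. Debreu density is immediate, since every strict pair is of the form $n \prec A$, $n \prec \top$ or $A \prec \top$ and is witnessed by $n$ or by $\top$ from $D$. For Debreu upper density the non-routine case is an incomparable pair $A \bowtie A'$ in $\mathcal{F}$: I need $d \in D$ with $d \preceq A'$ and $d \bowtie A$, and any $m \in A' \setminus A$ works, since $m \in A'$ gives $m \preceq A'$ and $m \notin A$ forces incomparability with $A$. The remaining pairs $n \bowtie n'$ and the two orientations of $n \bowtie A$ are dispatched by $d = n'$, any $m \in A$, or $d = n$, respectively. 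The main difficulty is choosing $\mathcal{F}$ so that a single countable $D$ both distinguishes all the uncountably many pairs $A \bowtie A'$ and stays within $\mathbb{N} \cup \{\top\}$; this is exactly what the almost-disjoint property guarantees, by ensuring each $A' \setminus A$ is nonempty.
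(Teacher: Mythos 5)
Your proposal is correct and takes essentially the same approach as the paper: your poset is isomorphic to the paper's own counterexample $\Sigma^* \cup \Sigma^\omega$ (finite strings and infinite branches, each level an internal antichain, with the prefix relation across levels), merely with an inessential top element $\top$ adjoined. Both arguments rest on the same two mechanisms: flattening the order so that every directed set contains its supremum (hence every element is trivial and any weak basis must contain the uncountable level), while the countable level of finite strings witnesses both Debreu density and Debreu upper density.
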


\begin{proof}
Take $P \coloneqq (\Sigma^* \cup \Sigma^\omega,\preceq)$, where $\Sigma$ is finite and
\begin{equation*}
    x \preceq y \iff 
    \begin{cases}
    x=y, \text{ or}\\
    x  \in \Sigma^*,\text{ } y \in \Sigma^\omega \text{ and } x \preceq_C y,
    \end{cases}
\end{equation*}
with $\preceq_C$ the partial order from the Cantor domain. (See Figure \ref{figure 2} for a representation with $\Sigma=\{0,1\}$.) As we directly see $P$ is a partial order, we begin by showing $P$ is directed complete. Consider a directed set $A \subseteq P$ with $|A|\geq 2$. (If $A=\{a\}$ for some $a \in P$, then $\sqcup A= a$ and we have finished.)
If $A \cap \Sigma^\omega = \emptyset$, notice $A$ is not directed, since given $x,y \in A \cap \Sigma^*$ $x \neq y$ there is no $z \in \Sigma^*$ such that $x,y \preceq z$. Thus, there exists some $x_A \in A \cap \Sigma^\omega$. Notice we actually have $A \cap \Sigma^\omega=\{x_A\}$, as given $y \in \Sigma^\omega$ $y \neq x_A$ there is no $z \in P$ such that $x_A,y\preceq z$. Analogously, we obtain $y \preceq x_A$ $\text{for all } y \in A$ and, thus,
$\sqcup A=x_A$. We conclude $P$ is directed complete. 
We notice now $\Sigma^*$ is a countable Debreu dense and Debreu upper dense subset of $P$. If $x \prec y$ for some $x,y \in P$, then $x \in \Sigma^*$ by definition and we conclude $\Sigma^*$ is Debreu dense. If $x \bowtie y$ with $x,y \in \Sigma^\omega$, then there exists some $d \in \Sigma^*$ such that $x \bowtie_c d$ and $d \preceq_c y$.
If $y \in \Sigma^\omega$ and $x \in \Sigma^*$ we consider some $d \in \Sigma^*$ such that $d \preceq_C y$ and get $x \bowtie d \preceq y$. If either $x \in \Sigma^\omega$ and $y \in \Sigma^*$ or $x,y \in \Sigma^*$, then we take $d=y$.
We obtain 
$P$ is Debreu upper separable. To finish, we will show 
any weak basis $B \subseteq P$ is uncountable.
By definition of weak basis, there exists some directed set $A_x \subseteq B$ such that $x = \sqcup A_x$ $\text{for all } x \in \Sigma^\omega$.
As discussed above, this implies $x \in A_x$.
Hence, $\Sigma^\omega \subseteq B$ and $B$ is uncountable.
\end{proof}

Note that, in the proof of \cite[Proposition 4 (iii)]{hack2022classification}, we introduced the counterexample we used in Proposition \ref{cont Deb upper sep but not w-cont}. Note, also, $\Sigma^\omega \subseteq \mathcal{T}_B$ for any weak basis $B$ of the dcpo in the proof of Proposition \ref{cont Deb upper sep but not w-cont}. Before we continue relating weak bases to order density properties, we define the minimal elements of a partial order. If $P$ is a partial order, then the \emph{set of minimal elements of $P$}, denoted by $\text{min}(P)$, is
\begin{equation*}
\text{min}(P) \coloneqq \{x \in P| \text{ there is no } y \in P \text{ such that } y \prec x\}.
\end{equation*} 
Notice, for the Cantor domain, $\text{min}(P)=\{\perp\}$. $\text{min}(P)$ is related to order density properties since, as we note in Lemma \ref{minP countable}, it is countable whenever countable Debreu upper dense subsets exist.

\begin{lem}
\label{minP countable}
If $P$ is a partial order with a countable Debreu upper dense subset $D\subseteq P$, then $\text{min}(P)$ is countable.
\end{lem}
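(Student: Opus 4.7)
The plan is to show that every element of $\text{min}(P)$ lies in $D$, except possibly a single bottom element. The two ingredients are the definition of minimality (nothing strictly below) and Debreu upper density (an element of $D$ below and incomparable to the right element of any incomparable pair).

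First I would observe a useful dichotomy for any $x \in \text{min}(P)$: either $x$ has some partner $y \in P$ with $x \bowtie y$, or else $x$ is comparable with every element of $P$. In the second subcase, $y \preceq x$ forces $y = x$ by minimality, so $x \preceq y$ for every $y \in P$, meaning $x$ is a bottom element $\bot$. Since a bottom is unique and below every minimal element, in the presence of a bottom we in fact get $\text{min}(P) = \{\bot\}$, which is trivially countable.

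In the first subcase, I apply Debreu upper density to the incomparable pair $(y, x)$ to obtain $d \in D$ with $y \bowtie d \preceq x$. Because $x$ is minimal, $d \prec x$ is impossible, so $d = x$, and hence $x \in D$. Thus every minimal element that is not a bottom lies in $D$.

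Combining the two subcases: if $P$ has a bottom, $\text{min}(P)$ is a singleton; otherwise every minimal element must have some incomparable partner (since otherwise that element would itself be a bottom), so $\text{min}(P) \subseteq D$. Either way $\text{min}(P)$ is countable. The only point that requires a moment of care is verifying that a minimal element comparable with everything is necessarily a bottom, but this is immediate from the definition of minimality; I do not anticipate any real obstacle.
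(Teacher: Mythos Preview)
Your proof is correct and follows essentially the same approach as the paper: both show that any minimal element with an incomparable partner must itself lie in $D$, by applying Debreu upper density and using minimality to force $d=x$. The only cosmetic difference is that the paper takes the incomparable partner from $\text{min}(P)$ itself (reducing to the case $|\text{min}(P)|\geq 2$), whereas you allow any $y\in P$ and separately dispose of the bottom case.
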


\begin{proof}
If $|\text{min}(P)|\leq 1$, we are done. If there exist $x,y \in \text{min}(P)$ $x \neq y$, then $x \bowtie y$, since $x \prec y$ ($y\prec x) $ contradicts the fact $y \in \text{min}(P)$ ($x \in \text{min}(P)$). Thus, there exists some $d \in D$ such that $x \bowtie d \preceq y$. By definition of $\text{min}(P)$, we have $d=y$ and, hence, $\text{min}(P) \subseteq D$ is countable.
\end{proof}
Clearly, we cannot eliminate the hypothesis in Lemma \ref{minP countable} as, for example, $\text{min}(P)$ is uncountable for any uncountable set with the trivial partial order.

\begin{figure}[!tb]
\centering
\begin{tikzpicture}
\node[rounded corners, draw,fill=blue!10, text height = 3cm, minimum width = 11cm,xshift=4cm,label={[anchor=west,left=.1cm,thick, font=\fontsize{15}{15}\selectfont, thick]180:\textbf{$\Sigma^\omega$}}] {};
\node[rounded corners, draw,fill=blue!10, text height = 3cm, minimum width = 11cm,xshift=4cm,yshift=-4cm,label={[anchor=west,left=.1cm, thick, font=\fontsize{15}{15}\selectfont, thick]180:\textbf{$\Sigma^*$}}] {};
    \node[main node] (1) {$010101..$};
    \node[main node] (2) [right = 2cm  of 1]  {$000000..$};
    \node[main node] (3) [right = 2cm  of 2]  {$010000..$};
    \node[main node] (4) [below = 2cm  of 1] {$0101$};
    \node[main node] (5) [right = 2cm  of 4] {$0$};
    \node[main node] (6) [right = 2cm  of 5] {$01$};

    \path[draw,thick,->]
    (4) edge node {} (1)
    (5) edge node {} (2)
    (5) edge node {} (1)
    (5) edge node {} (3)
    (6) edge node {} (3)
    (6) edge node {} (1)
    ;
\end{tikzpicture}
\caption{Representation of a dcpo, defined in Proposition \ref{cont Deb upper sep but not w-cont}, which is Debreu upper separable and has no countable weak basis. In particular, we show $\Sigma^\omega$ and $\Sigma^*$ for $\Sigma=\{0,1\}$ and how $010101..,000000..,010000.. \in \Sigma^\omega$ are related to $0101, 0, 01 \in \Sigma^*$. Notice an arrow from an element $w$ to an element $t$ represents $w \prec t$.}
\label{figure 2}
\end{figure}
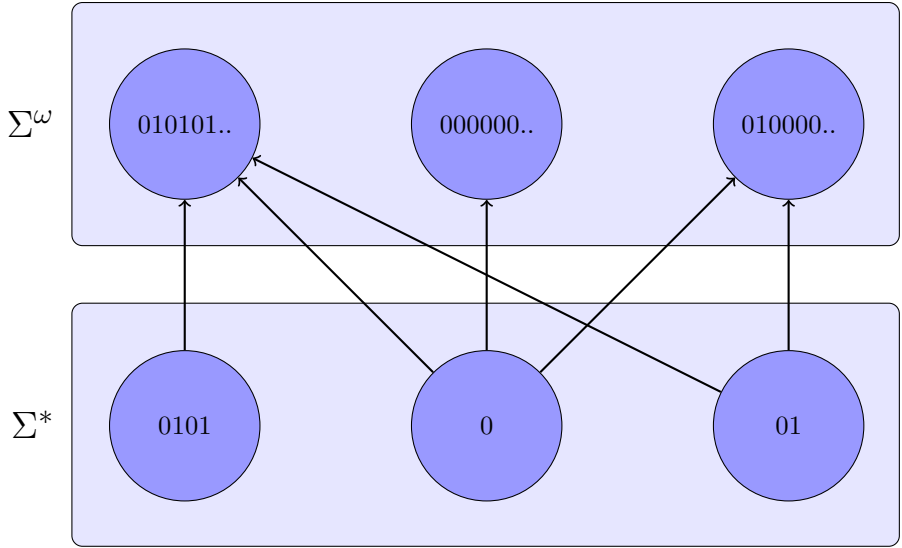

Since, by Proposition \ref{cont Deb upper sep but not w-cont}, Debreu upper separability is not enough for countable weak bases to exist, we consider stronger order density properties.

\begin{defi}[Order density properties II]
We say $D \subseteq P$ is \emph{order dense} if, for any pair $x,y \in P$ such that $x \prec y$, there exists some $d \in D$ such that $x \prec d \prec y$ \cite{ok2002utility}. We say $P$ is \emph{order separable} if it has a countable order dense subset \cite{mehta1986existence}.
\end{defi}

Notice that, although the Cantor domain $\Sigma^\infty$ is Debreu separable, it is not order separable since, if $s \in \Sigma^*$ and $\beta \in \Sigma$, then we have $s \prec s\beta$ and, $\text{for all } t \in \Sigma^*$ such that $t \prec s\beta$, $t \preceq s$ also holds. As we show in Proposition \ref{order sep has count weak basis}, order separability is sufficient to build a countable weak basis for $P\setminus \text{min}(P)$ the basic idea is similar to the one for Lemma \ref{countable covers} and, taking into account Lemma \ref{minP countable}, we can extend the result to $P$ by also assuming the existence of a countable Debreu upper dense subset.

\begin{prop}
\label{order sep has count weak basis}
If $P$ is a dcpo, then countable order dense subsets $D \subseteq P$ are countable weak bases for $P\setminus \text{min}(P)$.
Furthermore, if $P$ also has a countable Debreu upper dense subset, then $D \cup \text{min}(P)$ is a countable weak basis for $P$.
\end{prop}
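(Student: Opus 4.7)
The plan is to prove the first statement by an inductive construction that exploits countability of $D$ together with strict order density, and then to deduce the second from the first using Lemma \ref{minP countable}.

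For the first statement, I would fix $x \in P \setminus \text{min}(P)$, enumerate $D = (d_n)_{n \geq 0}$, and write $D_x^- := \{d \in D : d \prec x\}$. Starting from $B^{(0)} := \emptyset$, for each $n \geq 0$ I set $B^{(n+1)} := B^{(n)} \cup \{d_n\}$ when $d_n \in D_x^-$ and $B^{(n)} \cup \{d_n\}$ is directed, and $B^{(n+1)} := B^{(n)}$ otherwise; then $B_x := \bigcup_{n \geq 0} B^{(n)} \subseteq D$. Each $B^{(n)}$ is directed by construction, so the ascending union $B_x$ is directed as well. Moreover, $B_x$ is non-empty: since $x \notin \text{min}(P)$ there is some $y \prec x$, and order density yields $d \in D$ with $y \prec d \prec x$, so $D_x^- \neq \emptyset$; the first element of $D_x^-$ encountered in the enumeration is appended to a then-empty $B^{(n)}$ as a singleton.

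The main step is to establish $\sqcup B_x = x$. Since $x$ is an upper bound of $B_x$, the inequality $\sqcup B_x \preceq x$ is automatic. If $\sqcup B_x \prec x$, order density provides some $d_k \in D$ with $\sqcup B_x \prec d_k \prec x$; in particular, $d_k \in D_x^-$. For every $b \in B^{(k)} \subseteq B_x$ we then have $b \preceq \sqcup B_x \prec d_k$, so $d_k$ is a strict upper bound of $B^{(k)}$, making $B^{(k)} \cup \{d_k\}$ directed. The construction therefore forces $d_k \in B^{(k+1)} \subseteq B_x$, giving $d_k \preceq \sqcup B_x$, which contradicts $\sqcup B_x \prec d_k$. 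Hence $\sqcup B_x = x$, and $D$ is a weak basis for $P \setminus \text{min}(P)$.

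For the second statement, Lemma \ref{minP countable} makes $\text{min}(P)$ countable, so $D \cup \text{min}(P)$ is countable. For $x \in P \setminus \text{min}(P)$ the directed set $B_x \subseteq D \subseteq D \cup \text{min}(P)$ built above witnesses the weak-basis property; for $x \in \text{min}(P)$ the singleton $\{x\}$ suffices. The point I would most need to watch is that the naive candidate $\{d \in D : d \prec x\}$ need not itself be directed, so the filtering in the inductive construction is essential; strict order density (rather than mere Debreu density) is then what prevents the filtered set $B_x$ from having a supremum strictly below $x$.
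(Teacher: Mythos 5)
Your proof is correct, but it follows a genuinely different construction from the paper's. The paper proves the first statement via Lemma \ref{order dense chain}: after fixing some $x \prec y$, it recursively selects least-indexed elements of the enumeration of $D$ lying strictly between the previously selected element and $y$, producing an increasing \emph{chain} $(d'_n)_{n\geq 0}$, and it rules out $\sqcup D' \prec y$ by a maximal-index argument showing that an order-density interpolant would have been selected at the next recursive step. You instead filter the enumeration greedily, accepting $d_n$ exactly when $d_n \prec x$ and acceptance preserves directedness of the (finite) stage set $B^{(k)}$; your output is only a directed set rather than a chain, but that is all the weak-basis definition requires, and your supremum argument is arguably cleaner: any interpolant $d_k$ with $\sqcup B_x \prec d_k \prec x$ dominates, via $\sqcup B_x$, everything accepted before stage $k$, hence was automatically accepted, contradicting $\neg(d_k \preceq \sqcup B_x)$. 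This after-the-fact verification is legitimate because acceptance at stage $k$ depends only on $B^{(k)}$ and $d_k$, not on the later check. What the paper's heavier chain construction buys is reuse: the increasing sequence of Lemma \ref{order dense chain} (with $y$ not in the sequence) is invoked again in Lemma \ref{lem: charac K(P)} and in Propositions \ref{imme succ} and \ref{cont order sep and min coun implies w cont}, whereas your directed set suffices only for the proposition at hand. Two small points you should patch, both handled explicitly in the paper: first, for $D$ to literally be a weak basis \emph{of} the dcpo $P \setminus \text{min}(P)$ (and for your $B_x$ to avoid minimal elements, which your filter $d \prec x$ does not exclude), replace $D$ by $D \setminus \text{min}(P)$ at the outset; this preserves order density, since any density witness $d$ with $x \prec d \prec y$ is automatically non-minimal. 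Second, one should note that $P \setminus \text{min}(P)$ is itself a dcpo whose directed suprema agree with those computed in $P$ (the supremum of a nonempty directed set of non-minimal elements cannot be minimal). Your treatment of the second statement coincides with the paper's: Lemma \ref{minP countable} for countability of $\text{min}(P)$, plus singletons $\{x\}$ for minimal $x$.
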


\begin{proof}
We begin showing the first statement. Notice, if $P$ is a dcpo, then $P\setminus \text{min}(P)$ is a dcpo as well. Take $D \subseteq P$ a countable order dense subset of $P$, which we can choose w.l.o.g. such that $D \cap \text{min}(P) = \emptyset$. We will show $D$ is a countable weak basis for $P\setminus \text{min}(P)$. In particular, we will show 
the following lemma.
\begin{lem}
\label{order dense chain}
If $P$ is a dcpo, $D \subseteq P$ is a countable order dense subset and $x,y \in P$ such that $x \prec y$, then $D' =(d_n')_{n \geq 0} \subseteq D$ is an increasing chain such that $\sqcup D' =y$, where
\begin{equation}
\label{def d'_n II}
\begin{alignedat}{2}
    d_0' &\coloneqq d_{m_0} \text{ and } &&m_0 \coloneqq \text{min} \{n \geq0|x \prec d_n \prec y\},\\
    d_n' &\coloneqq d_{m_n} \text{ and } &&m_n \coloneqq \text{min}\{n \geq m_{n-1}|d_{n-1}' \prec d_n \prec y\} \text{ for all } n \geq 1.
\end{alignedat}
\end{equation}
\end{lem}

\begin{proof}
 Take some $y \in P\setminus \text{min}(P)$ and $D=(d_n)_{n \geq 0}$ a numeration of $D$. Since $y \not \in \text{min}(P)$, there exists some $x \in P$ such that $x \prec y$. Moreover, by definition of order separability, we have that $m_n$ exists for all $n \geq 0$ and, hence, $D' =(d_n')_{n \geq 0} \subseteq D$ in \eqref{def d'_n II} is well-defined.
 Since $D'=(d_n')_{n\geq0}$ is an increasing sequence by construction, it has a supremum $\sqcup D'$. Given that $y$ is an upper bound of $(d_n')_{n\geq0}$ by construction, we have $\sqcup D' \preceq y$. To finish the proof of the first statement, we assume $\sqcup D' \prec y$ and get a contradiction. If that was the case, there would be some $\overline{n} \geq0$ such that $\sqcup D' \prec d_{\overline{n}} \prec y$ by order separability. Consider, thus, $\overline{m} \coloneqq \text{max} \{ n < \overline{n}| d_n \in D'\}$. Since $d_{\overline{m}+1},..,d_{\overline{n}-1} \not \in D'$ by definition of $\overline{m}$ and $d_{\overline{m}} \prec d_{\overline{n}} \prec y$ by transitivity, we would have, assuming $d_n'=d_{\overline{m}}$ for some $n\geq0$ w.l.o.g., $d_{n+1}'=d_{\overline{n}}$, a contradiction. Thus, $D'$ is an increasing chain such that $ \sqcup D' = y$.
 \end{proof}
 Hence, by Lemma \ref{order dense chain}, $D$ is a countable weak basis for $P\setminus \text{min}(P)$. Regarding the second statement, notice we can take $A_x= \{x\}$ as directed set with $\sqcup A_x=x$ $\text{for all } x \in \text{min}(P)$ and, since $\text{min}(P)$ is countable by Lemma \ref{minP countable}, we have $D \cup \text{min}(P)$ is a countable weak basis for $P$.
\end{proof}

\begin{rem}[Implication for computability]
By Proposition \ref{order sep has count weak basis}, we can define computable elements (in the sense of Definition \ref{def: comput ele}) on a dcpo $P$, even if it is uncountable, whenever we have a countable order dense subset $D_1$ and a countable Debreu upper dense subset $D_2$, provided $D_1 \cup D_2$ is effective.
\end{rem}

Since order density of $P$ is enough to introduce computability on $P\setminus \text{min}(P)$ while, again by Proposition \ref{cont Deb upper sep but not w-cont}, Debreu separability is not, we take a stronger version of Debreu separability as hypothesis in Proposition \ref{imme succ}. In particular, we show there are countable weak bases for $P\setminus \text{min}(P)$ whenever countable Debreu dense subset satisfying a specific property exist. To express such a property, we recall some definitions in \cite{bridges2013representations}. If $x,y \in P$, then we say $y$ is an \emph{immediate successor} of $x$ and $x$ is an \emph{immediate predecessor} of $y$ if both $x \prec y$ and $(x, y) \coloneqq \{z \in P|x \prec z \prec y\} = \emptyset$ hold. In this scenario, $(x, y)$ is called a \emph{jump}.

\begin{prop}
\label{imme succ}
If $P$ is a dcpo
and $D \subseteq P$ is
a countable Debreu dense subset whose elements have a finite number of immediate successors, then
$(Q_1\cup D)\setminus \text{min}(P)$ is a countable weak basis for $P\setminus \text{min}(P)$, where
\begin{equation}
\label{def Q1}
Q_1 \coloneqq \left\{ y \in P| \exists x \in P \text{ s.t. } (x,y)=\emptyset\right\}.
\end{equation}
Furthermore, if $P$ also has a countable Debreu upper dense subset, then $Q_1 \cup D \cup \text{min}(P)$ is a countable weak basis for $P$.
\end{prop}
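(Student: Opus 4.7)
My plan is to prove the main statement and then obtain the ``furthermore'' by appending $\text{min}(P)$, which is countable under the extra hypothesis by Lemma \ref{minP countable}. For the main statement I treat countability and the weak basis property separately. Countability of $Q_{1}\cup D$ reduces to bounding $Q_{1}\setminus D$: given $y\in Q_{1}\setminus D$ with $(x,y)=\emptyset$ for some $x\prec y$, Debreu density applied to $x\prec y$ yields $d\in D$ with $x\preceq d\preceq y$; since $y\notin D$ and $(x,y)$ is empty, $d$ must equal $x$, so $x\in D$ and $y$ is one of the finitely many immediate successors of an element of $D$. Thus $Q_{1}\setminus D\subseteq\bigcup_{d\in D}\{y\in P:d\prec y\text{ and }(d,y)=\emptyset\}$ is a countable union of finite sets.

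For the weak basis property, fix $y\in P\setminus\text{min}(P)$. If $y\in Q_{1}\cup D$, the trivial directed set $\{y\}\subseteq(Q_{1}\cup D)\setminus\text{min}(P)$ works. The substantive case is $y\in P\setminus(Q_{1}\cup D\cup\text{min}(P))$, where $y$ has no immediate predecessor and $y\notin D$. Fixing an enumeration $D=(e_{n})_{n\geq 0}$, I would construct a strictly increasing chain $(d_{n})_{n\geq 0}\subseteq D$ strictly below $y$ by the greedy rule $d_{0}:=e_{k_{0}}$ with $k_{0}:=\min\{k:e_{k}\prec y\}$ and $d_{n+1}:=e_{k_{n+1}}$ with $k_{n+1}:=\min\{k:d_{n}\prec e_{k}\prec y\}$. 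Existence of $d_{0}$ uses any $x\prec y$ together with Debreu density on $x\prec y$ and $y\notin D$; existence of $d_{n+1}$ uses $y\notin Q_{1}$ to produce $z$ with $d_{n}\prec z\prec y$ and then Debreu density on $z\prec y$, again leveraging $y\notin D$ to keep the witness strictly below $y$.

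The main obstacle is verifying $\sqcup_{n}d_{n}=y$. Writing $s:=\sqcup_{n}d_{n}\preceq y$ and assuming $s\prec y$, Debreu density provides $d=e_{m}\in D$ with $s\preceq d\prec y$. If $d=d_{n_{0}}$ for some $n_{0}$, then $d_{n_{0}+1}\preceq s\preceq d=d_{n_{0}}$ contradicts $d_{n_{0}}\prec d_{n_{0}+1}$. Otherwise $d_{n}\prec d=e_{m}$ for every $n$, so $e_{m}$ is a valid candidate at every stage of the greedy rule; since $d_{n+1}\neq e_{m}$ this forces $k_{n+1}<m$ for every $n\geq 0$, contradicting the distinctness of the infinitely many indices $k_{n+1}$. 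To land inside $(Q_{1}\cup D)\setminus\text{min}(P)$, I would discard $d_{0}$ if it happens to lie in $\text{min}(P)$: the tail still has supremum $y$ and its elements are strictly above $d_{0}$, hence non-minimal. Finally, the ``furthermore'' is immediate: $\text{min}(P)$ is countable by Lemma \ref{minP countable}, each $m\in\text{min}(P)$ is covered by $\{m\}$, and $Q_{1}\cup D\cup\text{min}(P)$ is therefore a countable weak basis for $P$.
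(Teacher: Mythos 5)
Your proof is correct and follows essentially the same route as the paper: the countability of $Q_1$ via Debreu density forcing the witness $d$ to equal the immediate predecessor $x \in D$, the trivial directed set $\{y\}$ for elements of $Q_1 \cup D$, and for the remaining elements (your case $y \notin Q_1$ is exactly the paper's $Q_0$) a greedy minimal-index chain in $D$ emulating the construction and supremum argument of Lemma \ref{order dense chain}, with Lemma \ref{minP countable} yielding the ``furthermore''. Your explicit index-distinctness contradiction and the tail trick for discarding a minimal $d_0$ are just more detailed renderings of steps the paper leaves implicit (it instead chooses $D$ disjoint from $\text{min}(P)$ and interpolates inside $(x,y)$ to keep witnesses strictly below $y$).
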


\begin{proof}
We begin showing the first statement. Notice, if $P$ is a dcpo, then $P\setminus \text{min}(P)$ is also a dcpo. Take $D\subseteq P$ a countable Debreu dense subset of $P$ whose elements have a finite number of immediate successors and $y \in P\setminus \text{min}(P)$. Notice $y \in Q_0 \cup Q_1$, where
\begin{equation*} 
Q_0 \coloneqq \{ y \in P| \text{for all } x \in P \text{ s.t. } x \prec y \text{ } \exists z \in P \text{ s.t. } x \prec z \prec y\}
\end{equation*}
and $Q_1$ is defined as in \eqref{def Q1}.
If $y \in Q_0$, we take some $x \prec y$ and we can emulate the construction of $(d'_n)_{n \geq 0}$ in Lemma \ref{order dense chain}, \eqref{def d'_n II}, to construct an increasing sequence contained in $D$ whose supremum is $y$.
We just have to notice, by Debreu separability and definition of $Q_0$, whenever $x \prec y$ for $x,y \in P$, there exist $z \in P$ and $m\geq0$ such that $x \prec z \preceq d_m \prec y$.
Conversely, if $y \in Q_1$,
there exists some $x \in P$ such that $(x,y)=\emptyset$. If $y \not \in D$, then $x \in D$ by Debreu separability. If we denote by $(s_n)_{n\geq0}$ a numeration of the immediate successors of the elements in $D$, which can be constructed since each member of $D$ has a finite number of immediate successors and $D$ is countable by hypothesis, we have
$Q_1 \subseteq D\cup(s_n)_{n\geq0}$ and, thus,
$Q_1$ is countable.
We conclude $(Q_1\cup D)\setminus \text{min}(P)$ is a countable weak basis for $P\setminus \text{min}(P)$.
For the second statement, since $\text{min}(P)$ is countable by Lemma \ref{minP countable}, we conclude $Q_1 \cup D \cup \text{min}(P)$ is a countable weak basis for $P$.
\end{proof}

\begin{rem}[Implication for computability]
By Proposition \ref{imme succ}, we can define computable elements (in the sense of Definition \ref{def: comput ele}) on a dcpo $P$ with a countable Debreu dense subset $D_1$ and a countable Debreu upper dense subset $D_2$ whenever the elements in $D_1$ have a finite number of immediate successors and $D_1 \cup D_2$ is effective.
\end{rem}

Note the hypothesis in the first statement of Proposition \ref{imme succ} is weaker than order separability since, whenever a partial order is dense, immediate successors do not exist.\footnote{We say a partial order is \emph{dense} if, for all $x,y \in X$ such that $x \prec y$, there exists some $z \in X$ such that $x \prec z \prec y$ \cite{bridges2013representations}. That is, a partial order is \emph{dense} if it has an order dense subset.}
Notice, also, the Cantor domain satisfies the hypotheses in Proposition \ref{imme succ}, since $\Sigma^*$ is a countable Debreu dense subset of $\Sigma^\infty$ and each $s \in \Sigma^*$ has exactly $|\Sigma|$ immediate successors. The converse of Proposition \ref{imme succ} is, again by Proposition \ref{weak basis not debreu sep}, false.

To summarize, the main results in this section are Theorem \ref{thm 1} and Propositions \ref{order sep has count weak basis} and \ref{imme succ}. In the first one, we show how one can profit from Debreu density to introduce a dcpo that includes computability on the non-trivial elements of any weak basis $B$. In the second, we use the stronger property of order separability to extend computability to all non-minimal elements. In the last one, we show that we can achieve the same results as in Proposition \ref{order sep has count weak basis} by asking for the existence of a countable Debreu separable subset whose elements only have a finite number of immediate successors. Lastly, we extend computability to the whole dcpo in Propositions \ref{order sep has count weak basis} and \ref{imme succ} by also requiring the existence of a countable Debreu upper dense subset.

\section{Uniform computability via ordered sets}
\label{uniform compu}

Following \cite{hack2022computation}, in Section \ref{order in compu}, we considered an element $x$ in some dcpo $P$ with a countable weak basis $B\subseteq P$ to be computable if there exists some directed set $B_x \subseteq B$ such that $\sqcup B_x =x$ and whose associated subset of the natural numbers $\alpha^{-1}(B_x)$ is recursively enumerable.
We consider now a stronger approach, where we
associate to each $x \in P$ a unique directed set $B_x \subseteq B$ such that $\alpha^{-1}(B_x)$ is recursively enumerable if and only if 
$x$ is computable. In order to do so, $B_x$ should be fundamentally related to $x$, that is, the information in every $b \in B_x$ should be gathered by any process which computes $x$. The differences between the approach in Section \ref{order in compu} and the one here are discussed in \cite{hack2022computation}. Crucially, while the more general approach only enables to introduce computable elements, computable functions can also be defined in this stronger framework, which was introduced by Scott in \cite{scott1970outline}. In fact, the uniform approach to computability we discuss in this section is known as \emph{domain theory} \cite{scott1982lectures,stoltenberg1994mathematical,mislove1998topology,edalat1999domain,stoltenberg2001notes,gierz2012compendium,cartwright2016domain}. Note that, as in Section \ref{order in compu}, we only introduce the order structure used to define computability and do not address how to translate it to other spaces of interest (see \cite{hack2022computation} and the references therein).

 Before we continue, 
 we introduce the Scott topology, which will play a major role in the following.
 
 \begin{defi}[Scott topology {\cite{scott1970outline,abramsky1994domain}}]
 If $P$ is a dcpo, we say a set $O \subseteq P$ is \emph{open} in the \emph{Scott topology}
 if it is \emph{upper closed} (if $x \in O$ and there exists some $y \in P$ such that $x \preceq y$, then $y \in O$) and \emph{inaccessible by directed suprema} (if $A\subseteq P$ is a directed set such that $\sqcup A \in O$, then $A \cap O \neq \emptyset$). We denote by $\sigma(P)$ the Scott topology on $P$.
 \end{defi}
 
 Note that
 the Scott topology characterizes the partial order in $P$, that is,
\begin{equation}
\label{charac order by topo}
    x \preceq y \iff  x \in O \text{ implies } y \in O \text{ } \text{for all } O \in \sigma(P)
\end{equation}
\cite[Proposition 2.3.2]{abramsky1994domain}.
Note that, by \eqref{charac order by topo}, the Scott topology satisfies the $T_0$ topological separation axiom. (We say a topological space $(X,\tau)$ is a $T_0$ space or a \emph{Kolmogorov} space if, for every pair of distinct points $x,y \in X$, there exists an open set $O \in \tau$ such that either $x \in O$ and $y \not \in O$ or $y \in O$ and $x \not \in O$ hold \cite{kelley2017general}.)

We can think of the Scott topology as the family of properties on the data set $P$ which allow us to distinguish the elements in $P$ \cite{smyth1983power}. In particular, by definition, if some computational processes has a limit, then any property of the limit is verified in finite time and, since $\sigma(P)$ is $T_0$, these properties are enough to distinguish between the elements of $P$.

In order to attach
to each element in a dcpo a unique subset of $\mathbb{N}$ which is equivalent to it for all computability purposes, we recall the way-below relation.

\begin{defi}[Way-below(-above) relation {\cite{scott1972continuous,abramsky1994domain}}]
If $x,y \in P$, we say $x$ is \emph{way-below} $y$ or $y$ is \emph{way-above} $x$ and denote it by $x \ll y$ if, whenever $y \preceq \sqcup A$ for a directed set $A\subseteq P$, then there exists some $a \in A$ such that $x \preceq a$. The set of element way-below (way-above) some $x$ is denoted by $ \twoheaddownarrow x$ ($\twoheaduparrow x$).
\end{defi}

 We think of an element way-below another as containing \emph{essential information} about the latter, since any computational process producing the latter cannot avoid gathering the information in the former. For example, if $A_1 \subseteq \twoheaddownarrow x$ is a directed set such that $\sqcup A_1 =x$, then, if $A_2 \subseteq P$ is a directed set where $\sqcup A_2=x$, there exists $\text{for all } a_1 \in A_1$ some $a_2 \in A_2$ such that $a_1 \preceq a_2$.
We can regard $A_1$, thus, as a canonical computational process that yields $x$. (To be more precise, one can show that the computability, in the sense of Definition \ref{def: comp ele II}, of an element is equivalent to the computability of a specific subset of the basis. This is formalized in \cite[Proposition 2]{hack2022computation}.) The significance of the introduction of $\ll$ for computability is discussed in \cite{hack2022computation}. We simply note, for the moment, that $x \ll y$ implies $x \preceq y$ and that, if $x \preceq y$ and $y \ll z$, then $x \ll z$ $\text{for all } x,y,z\in P$.

We introduce now, in a canonical way, computable elements in a dcpo. To do so, we first define effective bases.

\begin{defi}[Basis {\cite{abramsky1994domain}}]
A subset $B \subseteq P$ is called a \emph{basis} if, for any $x \in P$, there exists a directed set $B_x \subseteq \twoheaddownarrow x \cap B$ such that $\sqcup B_x = x$.
\end{defi}

Note that bases are exactly like weak bases except for the fact they achieve any element $x \in P$ using elements of $P$ which contain essential information about $x$. Recall, if $B$ is a basis, then it is a weak basis and $(\twoheaduparrow b)_{b \in B}$ is a topological basis for $\sigma(P)$ \cite{hack2022computation,abramsky1994domain}. A dcpo is called \emph{continuous} or a \emph{domain} if bases exist. As in the case of weak bases, we are interested in dcpos with a countable basis or \emph{$\omega$-continuous} dcpos, since computability can be introduced via
Turing machines there. Note that $\Sigma^\infty$ is $\omega$-continuous, as $B=\Sigma^*$ is a countable basis.

\begin{defi}[Effective basis {\cite{edalat1999domain,stoltenberg2008computability}}]
We say a basis $B \subseteq P$ is \emph{effective} if there is a finite map which enumerates $B$, $B=(b_n)_{n\geq0}$, there is a bottom element $\perp \in B$ and
\begin{equation*}
\{\langle n,m \rangle| b_n \ll b_m\}
\end{equation*}
is recursively enumerable.
\end{defi}

Effectivity for bases has the same purpose as for weak bases, although it poses stronger requirements. A discussion in this regard can be found in \cite{hack2022computation}. 
We assume w.l.o.g. $b_0 = \perp$ in the following. Note that the relevance of the bottom element is discussed in \cite{hack2022computation}. We can now define computable elements.

\begin{defi}[Computable element {\cite{edalat1999domain}}]
\label{def: comp ele II}
If $P$ is a dcpo with an effective basis $B=(b_n)_{n\geq0}$, we say $x \in P$ is \emph{computable} provided 
\begin{equation*}
\{n \in \mathbb{N}|b_n \ll x\}
\end{equation*}
is recursively enumerable.
\end{defi}

The dependence of the set of computable elements on the model in this approach is addressed in \cite{hack2022computation}.

Before introducing computable functions, we need some extra terminology.
We call a map $f:P \rightarrow Q$ between dcpos $P,Q$ a \emph{monotone} if $x \preceq y$ implies $f(x) \preceq f(y)$. Furthermore, we call $f$ \emph{continuous} if it is monotone and, for any directed set $A \subseteq P$, we have $f(\sqcup A) = \sqcup f(A)$ \cite{scott1970outline,abramsky1994domain}. Note that this definition is equivalent to the usual topological definition of continuity (see \cite{kelley2017general}) applied to the Scott topology. In fact, dcpos constitute the objects of the category DCPO, whose morphisms are the functions which are continuous in the Scott topology \cite{lawson1998computation}. Note, also, if $B \subseteq P$ is a weak basis and $f$ is continuous, then
$f(P)$ is determined by $f(B)$. This is the case since, if $x \in P$, then there exists some directed set $B_x \subseteq B$ such that $\sqcup B_x=x$. Thus, by continuity of $f$,
$f(x)=f(\sqcup B_x)=\sqcup f(B_x)$. We can consider the monotonicity of $f$ in the definition of continuity as a mere technical requirement to make sure $\sqcup f(A)$ exists for any directed set $A \subseteq P$.

We introduce now computable functions.

\begin{defi}[Computable function {\cite{edalat1999domain}}]
\label{def: comp func}
We say a function $f:P \to Q$ between two dcpos $P$ and $Q$ with effective bases $B=(b_n)_{n\geq0}$ and $B'=(b'_n)_{n\geq0}$, respectively, is \emph{computable} if $f$ is continuous and the set
 \begin{equation}
 \label{compu funct def}
\{\langle n,m \rangle| b'_n \ll f(b_m)\}
 \end{equation}
is recursively enumerable.
\end{defi}


Intuitively, the idea that any formal definition of computable function is meant to capture is that of a map that sends
computable inputs to computable outputs \cite{braverman2005complexity,ko2012complexity}. Indeed, this is the case for this definition (see \cite[Theorem 9]{edalat1999domain} and \cite{hack2022computation}). 
Moreover, we can think of continuity in $\sigma(P)$ as a weak form of computability.
The dependence of the set of computable functions on the model in this approach is addressed in \cite{hack2022computation}.


To conclude, we can return to the examples from Section \ref{examples}. In particular, one can see $\Sigma^*$ is an effective basis for $\Sigma^\infty$ \cite{hack2022computation} and $B_\mathcal{I}$  is known to be an effective basis of $(\mathcal{I},\sqsubseteq)$ \cite{edalat1999domain}. Moreover, $B_n \coloneqq \Lambda^n \cap \mathbb{Q}^n$ is a countable basis for majorization for any $n \geq 2$ and the following lemma, which we prove in Appendix \ref{lemma 5 proof} (see also Appendix \ref{alternative}), holds.

\begin{lem}\label{example domains}
The following statements hold:
\begin{enumerate}[label={(\arabic*)}]
\item Although $(\mathcal{I}, \sqsubseteq)$ is $\omega$-continuous, any Debreu dense subset $Z \subseteq \mathcal{I}$ has the cardinality of the continuum $\mathfrak{c}$.
\item If $n=2$, then $\mathbb{Q}^n \cap \Lambda^n$ is a countable basis for $(\Lambda^n,\preceq_M)$ and $(\Lambda^n,\preceq_M)$ is Debreu upper separable.
\item If $n \geq 3$, then $\mathbb{Q}^n \cap \Lambda^n$ is a countable basis for $(\Lambda^n,\preceq_M)$ and any Debreu dense subset $Z \subseteq \Lambda^n$ has the cardinality of the continuum $\mathfrak{c}$.
\end{enumerate}
\end{lem}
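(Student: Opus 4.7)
The plan is to handle the three statements in turn. For (1), the $\omega$-continuity of $(\mathcal{I},\sqsubseteq)$ is immediate from the paragraph preceding the lemma, which recalls that $B_\mathcal{I}$ is an effective and hence countable basis. To force any Debreu dense $Z\subseteq\mathcal{I}$ to be uncountable, I would exhibit an $\mathbb{R}$-parametrised family of pairs whose intermediate region is pinned down by the parameter: for each $r\in\mathbb{R}$ take $x_r := [r,r+2]$ and $y_r := [r,r+1]$, and check by unrolling $\sqsubseteq$ that any $d=[a,b]\in\mathcal{I}$ with $x_r\sqsubseteq d\sqsubseteq y_r$ must satisfy $a=r$. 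Distinct values of $r$ therefore force distinct witnesses in $Z$, giving $|Z|\geq\mathfrak{c}$; combined with $|\mathcal{I}|=\mathfrak{c}$, we conclude $|Z|=\mathfrak{c}$.

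For (2), note that $(a,1-a)\mapsto a$ is an order isomorphism from $(\Lambda^2,\preceq_M)$ to $([1/2,1],\leq)$, so $\Lambda^2$ is a totally ordered dcpo in which $\mathbb{Q}^2\cap\Lambda^2$ corresponds to the dense rationals in $[1/2,1]$. The basis claim follows by a standard argument: for $x\neq\perp$, the set $B_x := \{q\in\mathbb{Q}^2\cap\Lambda^2 : q\prec_M x\}$ is directed, consists of elements way-below $x$, and has supremum $x$; for $x=\perp$, $B_\perp=\{\perp\}$ works. Debreu upper separability is then immediate, since the Debreu upper dense condition is vacuous in a totally ordered set and $\mathbb{Q}^2\cap\Lambda^2$ is already Debreu dense.

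For (3), I would promote the weak basis of Proposition \ref{majo eff weak basis} to a basis with the help of Lemma \ref{arbitrary q}. Taking $B_x := \{q\in\mathbb{Q}^n\cap\Lambda^n : s_k(q)<s_k(x) \text{ for all } k<n\}$ for $x\in\Lambda^n\setminus\{\perp\}$ (and $B_\perp=\{\perp\}$), Lemma \ref{arbitrary q} makes $B_x$ nonempty, a further application makes it directed (given $q,q'\in B_x$, pick $\delta<\min_k(s_k(x)-\max(s_k(q),s_k(q')))$ and apply the lemma to $x$ with tolerance $\delta$), and a third application yields $\sqcup B_x=x$. The main obstacle is the way-below claim: that $s_k(q)<s_k(x)$ for all $k<n$ implies $q\ll x$ under $\preceq_M$. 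Given a directed $A$ with $x\preceq_M\sqcup A$, one must locate a single $a\in A$ with $s_k(a)\geq s_k(q)$ for every $k<n$, which hinges on a sufficient connection between $s_k(\sqcup A)$ and $\sup_{a\in A}s_k(a)$ so that the strict inequalities $s_k(q)<s_k(x)\leq s_k(\sqcup A)$ can be converted, via directedness of $A$, into a common dominator of $q$ inside $A$. Finally, to force any Debreu dense $Z\subseteq\Lambda^n$ to be uncountable, I would use
\[
x_r := \bigl(r,\, (1-r)/2,\, (1-r)/2,\, 0,\, \dots,\, 0\bigr), \quad y_r := \bigl(r,\, 1-r,\, 0,\, \dots,\, 0\bigr), \qquad r\in[1/2,1),
\]
both in $\Lambda^n$, with $x_r\prec_M y_r$; since $s_1(x_r)=s_1(y_r)=r$ and $s_k(x_r)=s_k(y_r)=1$ for $k\geq 3$, every $d\in\Lambda^n$ between them has $d_1=r$ and $d_k=0$ for $k\geq 4$, so distinct $r$ produce distinct witnesses in $Z$, yielding $|Z|\geq\mathfrak{c}=|\Lambda^n|$.
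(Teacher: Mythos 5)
Your parts (1) and (2), and the cardinality half of (3), are correct, and in places more self-contained than the paper. For (1) you run essentially the paper's argument with the roles of the endpoints swapped: the paper pins the right endpoint via $[y_x,x]\sqsubset[x,x]$, you pin the left endpoint via $[r,r+2]\sqsubseteq d \sqsubseteq [r,r+1]$, and either way one gets an injection of $\mathbb{R}$ into $Z$. For the order-density claims in (2) and (3) the paper simply cites \cite[Lemma 5 (i) and (ii)]{hack2022representing}, whereas you prove them directly; both of your arguments check out (the isomorphism $(a,1-a)\mapsto a$ onto $([1/2,1],\leq)$ makes Debreu upper density vacuous for $n=2$, and for $n\geq 3$ any $d$ with $x_r\preceq_M d\preceq_M y_r$ has $s_1(d)=r$, so distinct parameters force distinct witnesses in $Z$). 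Your construction of $B_x=\{q\in\mathbb{Q}^n\cap\Lambda^n \mid s_k(q)<s_k(x) \text{ for all } k<n\}$, with nonemptiness, directedness, and $\sqcup B_x=x$ all extracted from Lemma \ref{arbitrary q}, is exactly the paper's construction (the paper leaves directedness implicit; you spell it out correctly).

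The genuine gap is the one you flagged yourself and then left open: the implication that $s_k(q)<s_k(x)$ for all $k<n$ forces $q\ll_M x$, which is the entire content of "$\mathbb{Q}^n\cap\Lambda^n$ is a \emph{basis} and not merely a weak basis" for $n\geq 3$. Lemma \ref{arbitrary q} cannot supply this: it produces rational approximants of $x$, but says nothing about an arbitrary directed set $A$ with $x\preceq_M\sqcup A$. The paper does not prove this step either; it imports the full characterization \eqref{way-below majo} from \cite[Lemma 5.1]{martin2008technique}, so citing that result is the shortest repair. Alternatively, the "connection between $s_k(\sqcup A)$ and $\sup_{a\in A}s_k(a)$" you gesture at can be established directly: setting $t_k := \sup_{a\in A}s_k(a)$ for $k<n$ and $t_n := 1$, directedness shows the increments of $(t_k)$ are nonnegative and nonincreasing (for any $\varepsilon>0$ pick $a,b\in A$ nearly realizing $t_{k-1},t_{k+1}$ and a common dominator $c\in A$, so that $2t_k\geq 2s_k(c)\geq s_{k-1}(c)+s_{k+1}(c)>t_{k-1}+t_{k+1}-2\varepsilon$), whence $(t_k)$ is the partial-sum vector of an element of $\Lambda^n$ that one checks is $\sqcup A$; then $s_k(q)<s_k(x)\leq t_k$ yields for each $k<n$ some $a_k\in A$ with $s_k(a_k)>s_k(q)$, and a common upper bound $a\in A$ of $a_1,\dots,a_{n-1}$ satisfies $s_k(a)\geq s_k(a_k)>s_k(q)$ for all $k<n$, i.e.\ $q\preceq_M a$, proving $q\ll_M x$. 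Without either this argument or the citation, the central claim of (3) is unproven; with it, your proposal coincides with the paper's proof.
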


Furthermore, one can slightly modify the proof of Proposition \ref{majo eff weak basis} in the Appendix \ref{proof prop 1} to conclude $B_n$ is an effective basis. The key property we use is \cite[Lemma 5.1]{martin2008technique}, where it was shown
\begin{equation}
\label{way-below majo}
x \ll_M y \iff x = \perp \text{ or } s_k(x)<s_k(y) \text{ } \text{for all } k<n
\end{equation}
$\text{for all } x,y \in \Lambda^n$. Notice, Lemma \ref{example domains} improves upon \cite[Theorem 1.3]{martin2006entropy},
establishing $\omega$-continuity instead of just continuity for
all $(\Lambda^n,\preceq_M)$
with $n \geq 2$.

\section{Order density and bases}
\label{density in uniform compu}

 In this section, we continue relating order density properties to computability, specifically, to the uniform approach introduced in Section \ref{uniform compu}. We begin, in Section \ref{cont Debreu separable}, mimicking the relations in Section \ref{density in compu}, this time for bases. After establishing the discrepancy between bases and order density in general, we introduce, in Section \ref{upper comp Debreu}, a property under which they coincide, namely, conditional connectedness. Importantly, the Cantor domain fulfills this property. Notice, for generality, we will assume bases do not necessarily have a bottom element in this section.

 We can summarize the relation between Sections \ref{density in compu} and \ref{density in uniform compu} as follows: First, we show, in Propositions \ref{cont order sep and min coun implies w cont} and \ref{imme succ 2}, respectively, that the results regarding the existence of countable weak bases in Propositions \ref{order sep has count weak basis} and \ref{imme succ} can be emulated for bases by restricting ourselves to \emph{continuous} dcpos. Then, we show, in Theorem \ref{thm 2}, that the positive and negative results regarding the relation between weak bases and order density properties (see Propositions \ref{weak basis implications}, \ref{weak basis not debreu sep} and \ref{cont Deb upper sep but not w-cont}, and Theorem \ref{thm 1}) can be improved upon to reach equivalence by additionally asking for conditional connectedness.

\subsection{Continuous Debreu separable dcpos}
\label{cont Debreu separable}

We consider here continuous dcpos and extend the relationship between order density and computability from Section \ref{density in compu}, focusing on connecting the former with countable bases. We begin introducing a definition and two lemmas that will be useful in the following. We say an element $x \in P$ is \emph{compact} if $x \ll x$ holds and denote by $\text{K}(P)$ the set of compact elements of $P$ \cite{abramsky1994domain}. Note that $\text{K}(P) \subseteq B$ for any basis $B \subseteq P$.

\begin{lem}
\label{compact implies contained if supremum}
All compact elements in a dcpo $P$ are trivial $\text{K}(P) \subseteq \mathcal T_P$. The converse, however, is not true.
\end{lem}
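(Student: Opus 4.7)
The plan has two independent parts. For the inclusion $\text{K}(P) \subseteq \mathcal{T}_P$, I would start from an arbitrary compact element $x \in P$ (so that $x \ll x$) and any directed set $A \subseteq P$ with $\sqcup A = x$. Applying the way-below relation to $x \preceq \sqcup A$ yields some $a \in A$ with $x \preceq a$, while $a \in A$ also gives $a \preceq \sqcup A = x$; antisymmetry of $\preceq$ then forces $a = x$, so $x \in A$. Since this holds for every directed $A$ with supremum $x$, the element $x$ satisfies the trivial property, i.e., $x \in \mathcal{T}_P$.

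For the failure of the converse, the plan is to exhibit a concrete dcpo with a trivial element that is not compact. My candidate is $P \coloneqq \mathbb{N} \cup \{a,\top\}$, equipped with the order that extends the usual linear order on $\mathbb{N}$ by declaring $n \prec \top$ for each $n \in \mathbb{N}$, $a \prec \top$, and $a$ incomparable to every $n \in \mathbb{N}$. A brief case analysis on directed sets would verify that $P$ is directed complete: any non-empty subset of $\mathbb{N}$ is a chain, and such a chain either has a maximum (its supremum) or is cofinal in $\mathbb{N}$ (supremum $\top$, which is then the unique upper bound in $P$); any directed set containing both $a$ and some $n \in \mathbb{N}$ must contain $\top$ to supply a common upper bound; and sets containing $\top$ trivially have $\top$ as supremum.

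The two required properties of $a$ then fall out immediately. On the one hand, the only element below $a$ is $a$ itself, so $\{a\}$ is the unique directed set with supremum $a$, whence $a \in \mathcal{T}_P$. On the other hand, taking $A \coloneqq \mathbb{N}$ one has $a \preceq \top = \sqcup A$, yet no element of $A$ lies above $a$ since $a$ is incomparable to every natural number; hence $a \ll a$ fails and $a \notin \text{K}(P)$. I do not anticipate a genuine obstacle: the first inclusion is a one-line application of the way-below relation, and the counterexample is tailored so that each verification reduces to reading off the order relations.
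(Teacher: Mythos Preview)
Your proof of the inclusion $\text{K}(P)\subseteq\mathcal{T}_P$ is identical to the paper's: apply $x\ll x$ to the directed set $A$ with $\sqcup A=x$, obtain $a\in A$ with $x\preceq a\preceq x$, and conclude $x=a\in A$ by antisymmetry.

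For the failure of the converse, both arguments rely on the same mechanism: an element sitting \emph{beside} an ascending chain, below only the chain's supremum and incomparable to every chain member, so that it is trivially its own unique approximant yet fails to be compact. The paper realizes this with $([0,1],\preceq)$ where $0$ is incomparable to every point of $(0,1)$ and $0\preceq 1$, while you realize it with $\mathbb{N}\cup\{a,\top\}$ where $a$ is incomparable to every natural number and $a\prec\top$. Your version is countable and perhaps slightly cleaner to verify as a dcpo; the paper's version stays within a familiar interval. Either way the idea is the same, and your verification is complete and correct.
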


\begin{proof}
Take $x \in \text{K}(P)$ and a directed set $A$ such that $\sqcup A=x$. By definition of compact element, there exists some $a \in A$ such that $x \preceq a$. As $a \preceq \sqcup A$ by definition, then, by antisymmetry, we get $x =\sqcup A = a  \in A$. To show the converse is false, take the dcpo $([0,1],\preceq)$, where $x \preceq y$ if $x \leq y$ $\text{for all } x,y \in (0,1]$, $0 \preceq 0$ and $0 \preceq 1$. Note that $0 \bowtie y$ $\text{for all } y \in (0,1)$. While the only directed set $A$ with $\sqcup A=0$
is $A=\{0\}$, we also have $A' = (0,1)$ is directed with $\sqcup A'=1$ and $0 \bowtie a$ $\text{for all } a \in A'$. Thus, $\neg(0 \ll 1)$ and $0 \not \in \text{K}(P)$ as, otherwise, we would have $0 \ll 0 \preceq 1$, hence $0 \ll 1$.
\end{proof}

\begin{lem}
\label{second count = w-cont}
If $P$ is a continuous dcpo, then $\sigma(P)$ is second countable if and only if $P$ is $\omega$-continuous. However, there exist dcpos without bases where $\sigma(P)$ is second countable.
\end{lem}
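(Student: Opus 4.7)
The forward direction of the equivalence is immediate from the fact recalled in Section \ref{uniform compu}: for a continuous dcpo $P$ with basis $B$, the family $(\twoheaduparrow b)_{b\in B}$ is a topological basis for $\sigma(P)$, so a countable order-theoretic basis yields a countable topological basis.

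For the nontrivial converse, assume $P$ is continuous with a (possibly uncountable) basis $B$ and that $\{U_n\}_{n\geq 0}$ is a countable topological basis of $\sigma(P)$. The strategy is to carve out a countable $B' \subseteq B$ that remains a basis. For each pair $(n,m) \in \mathbb{N}^2$ for which there exists $b \in B$ with $b \in U_m$ and $U_n \subseteq \twoheaduparrow b$, pick one such $b$ and call it $b_{n,m}$; set $B' \coloneqq \{b_{n,m}\}$. The key cofinality step to verify is: for every $x \in P$ and every $c \ll x$, some element of $B' \cap \twoheaddownarrow x$ dominates $c$. I plan to apply interpolation (valid in continuous dcpos) twice to produce $c \ll d_1 \ll d_2 \ll x$, then use the topological basis to select $n,m$ with $x \in U_n \subseteq \twoheaduparrow d_2$ and $d_1 \in U_m \subseteq \twoheaduparrow c$, and invoke the basis property of $B$ applied to $d_2$ with $d_1 \ll d_2$ to obtain $b \in B$ with $d_1 \preceq b \ll d_2$. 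Such $b$ witnesses the pair $(n,m)$: upper closure of $U_m$ gives $b \in U_m$, and for $z \in U_n$, $b \preceq d_2 \ll z$ yields $b \ll z$ by the transitivity rule recalled in Section \ref{uniform compu}, so $U_n \subseteq \twoheaduparrow b$. Hence $b_{n,m} \in B'$ satisfies $c \ll b_{n,m} \ll x$. Combining this with the standard fact that $\twoheaddownarrow x$ is directed in continuous dcpos makes $B' \cap \twoheaddownarrow x$ a directed subset of $\twoheaddownarrow x \cap B'$ with supremum $x$, so $B'$ is a countable basis, and $P$ is $\omega$-continuous.

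For the second statement, I would exhibit $P \coloneqq \{a_i^k : i\in\{1,2\},\, k\in\mathbb{N}\} \cup \{\top\}$, where $a_i^k \prec a_i^{k+1}$ (two chains), the two chains are mutually incomparable, and $a_i^k \prec \top$ for all $i,k$. A direct case split on directed subsets (those meeting both chains must contain $\top$) confirms $P$ is directed complete. To see $P$ has no basis, observe $\twoheaddownarrow \top = \emptyset$: for any candidate $a_i^m$, the unbounded chain $\{a_j^k : k\geq 0\}$ with $j \neq i$ is directed with supremum $\top$ but meets no upward-shift of $a_i^m$. Upper closure together with inaccessibility against each such unbounded chain forces every non-empty Scott open to meet both chains in a non-empty tail, so the Scott opens are exactly $\emptyset$ and $U_{k_1,k_2} \coloneqq \{a_1^m : m \geq k_1\} \cup \{a_2^m : m \geq k_2\} \cup \{\top\}$ for $(k_1,k_2) \in \mathbb{N}^2$, a countable collection. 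Thus $\sigma(P)$ is countable and a fortiori second countable, although $P$ admits no basis.

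The main obstacle is the witness step in the converse of the equivalence, where the countable topological basis, the (possibly uncountable) order-theoretic basis, and double interpolation have to be combined to locate $b \in B$ lying simultaneously inside $U_m$ and below everything in $U_n$ in the way-below sense; once that witness exists, the remaining verifications are straightforward unwindings of the definitions.
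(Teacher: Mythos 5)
Your proposal is correct, but it is genuinely more self-contained than the paper on both halves. For the equivalence, the paper proves nothing at all: it disposes of the first statement by citing Lawson (Proposition 3.4 of \emph{The round ideal completion via sober compactifications}, the paper's reference for this fact). Your argument --- interpolating twice to get $c \ll d_1 \ll d_2 \ll x$, extracting $b \in B$ with $d_1 \preceq b \ll d_2$ from a basis set converging to $d_2$, and observing that the two defining properties of the pair $(n,m)$ (membership in $U_m \subseteq \twoheaduparrow c$ and $U_n \subseteq \twoheaduparrow b$) automatically transfer from the witness $b$ to the chosen representative $b_{n,m}$ --- is exactly the standard proof hiding behind that citation, and every ingredient you invoke ($\twoheaduparrow d \in \sigma(P)$, the interpolation property, the rule $x \preceq y \ll z \Rightarrow x \ll z$, directedness of $\twoheaddownarrow x$ with $\sqcup \twoheaddownarrow x = x$) is legitimately available in a continuous dcpo. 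What your route buys is independence from the external reference; what the paper's route buys is brevity. For the second statement the two counterexamples differ but exploit the same mechanism: the paper takes $[0,1]$ with the two continuum chains $[0,\tfrac12]$ and $[\tfrac12,1]$ (the latter reversed) glued at the top $\tfrac12$, cites the companion paper for the absence of a basis, and exhibits the countable topological basis of rational-endpoint sets $\{(p,q), [0,q),(p,1]\}$; your two $\omega$-chains below $\top$ realize the same phenomenon ($\twoheaddownarrow$ of the top is empty because the opposite chain is directed with supremum $\top$ yet never dominates the candidate) in a countable package, with the bonus that $\sigma(P) = \{\emptyset\} \cup \{U_{k_1,k_2}\}_{(k_1,k_2)\in\mathbb{N}^2}$ can be enumerated outright, making second countability trivial. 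One cosmetic patch: in arguing $\twoheaddownarrow \top = \emptyset$ you quantify only over candidates $a_i^m$, but you must also exclude $\top \ll \top$; the very same chains do this (either chain is directed with supremum $\top$ and contains no element above $\top$), so nothing of substance is missing.
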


\begin{proof}
The first statement is already known, see \cite[Proposition 3.4]{lawson1998computation}. For the second statement, take $P \coloneqq \big(\big[0,1\big],\preceq\big)$, where, for all $x,y \in [0,1]$,
\begin{equation*}
    x \preceq y \iff 
    \begin{cases}
    x \leq y &\text{ if } x,y \in \big[0,\frac{1}{2}\big],\\
    y \leq x & \text{ if } x,y \in \big[\frac{1}{2},1\big].
    \end{cases}
\end{equation*}
It is easy to see $P$ has no basis \cite{hack2022computation}. To conclude, we note that $\big\{(p,q), [0,q),(p,1]\big|0 \leq p < \frac{1}{2}<q \leq 1, p,q \in \mathbb{Q}\big\}$ is a countable basis for $\sigma(P)$. Take, thus, $x \in O \in \sigma(P)$ and assume w.l.o.g. $x < \frac{1}{2}$. If $x >0$, since $O \in \sigma(P)$ and $x \in O$, then there exists some $y<x$ such that $y \in O$, otherwise $D_x \coloneqq \{y \in P|y<x\}$ fulfills $\sqcup D_x=x$ and $D_x \cap O = \emptyset$, a contradiction. Arguing analogously, we have there exists some $z>\frac{1}{2}$ such that $z \in O$. Since $O$ is upper closed, $x \in (p_x,q_x) \subseteq O$, where $p_x,q_x \in \mathbb{Q}$, $y \leq p_x \leq x$ and $\frac{1}{2} \leq q_x \leq z$. If $x=0$, we can follow argue analogously and conclude $x \in [0,q) \subseteq O$ for some $q \in \mathbb{Q}$, $ \frac{1}{2}<q$.
\end{proof}

Regarding the Cantor domain, Note that $( \twoheaduparrow x)_{x \in \Sigma^*}$ is a countable topological basis for its Scott topology. Lemma \ref{second count = w-cont} establishes that, in order to relate order density with countable bases, we can relate the former to countability axioms on $\sigma(P)$. 
As a starting link, we relate, in Proposition \ref{count Debreu separable},
continuous Debreu separable dcpos with
these axioms. In particular, 
we show Debreu separability of $P$ implies $\sigma(P)$ is first countable for continuous dcpos. Furthermore, whenever the set of compact elements is countable, we show $P$ is $\omega$-continuous, that is, its Scott topology is second countable.

\begin{prop}
\label{count Debreu separable}
Take a dcpo $P$ and a countable Debreu dense subset $D \subseteq P$. If $P$ is continuous, then $\sigma(P)$ is first countable. Moreover, if $\text{K}(P)$ is countable, then $D \cup \text{K}(P)$ is a countable basis for $P$.
\end{prop}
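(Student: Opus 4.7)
The plan is to exploit continuity of $P$ together with Debreu density of $D$ to prove a single key step, from which both assertions follow quickly. The key step I aim to establish is: \emph{if $b \ll x$ with $b \neq x$, then there exists $d \in D$ with $b \preceq d$ and $d \ll x$.}

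To establish this key step, I would first argue that $\twoheaddownarrow x$ must contain some $c$ with $b \prec c$. Suppose not. Since $P$ is continuous, $\twoheaddownarrow x$ is directed with $\sqcup \twoheaddownarrow x = x$, so every $c' \in \twoheaddownarrow x$ admits a common upper bound $c'' \in \twoheaddownarrow x$ with $b$; our assumption rules out $b \prec c''$, forcing $c'' = b$, hence $c' \preceq b$. Then $b$ would be an upper bound of $\twoheaddownarrow x$, yielding $x = \sqcup \twoheaddownarrow x \preceq b$ and, by antisymmetry combined with $b \preceq x$, the contradiction $b = x$. Once the strict successor $c$ is obtained, Debreu density applied to $b \prec c$ yields $d \in D$ with $b \preceq d \preceq c$, and the implication $d \preceq c \ll x \Rightarrow d \ll x$ recalled in Section \ref{uniform compu} completes the step.

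For first countability, I would set $F_x := (D \cap \twoheaddownarrow x) \cup (\{x\} \cap \text{K}(P))$, which is countable since $D$ is. Every $f \in F_x$ satisfies $f \ll x$, so $\twoheaduparrow f$ is a Scott-open neighbourhood of $x$. Conversely, given any Scott-open $U \ni x$, continuity provides some $b \ll x$ with $\twoheaduparrow b \subseteq U$; if $b = x$ then $x \in \text{K}(P)$ and $f := x \in F_x$ satisfies $\twoheaduparrow f \subseteq U$, while if $b \neq x$ the key step yields $d \in F_x$ with $b \preceq d \ll x$, giving $x \in \twoheaduparrow d \subseteq \twoheaduparrow b \subseteq U$. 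Thus $\{\twoheaduparrow f : f \in F_x\}$ is a countable local base at $x$.

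For the second statement, assume $\text{K}(P)$ is countable, so $D \cup \text{K}(P)$ is countable. For $x \in \text{K}(P)$ the directed set $\{x\} \subseteq \twoheaddownarrow x \cap \text{K}(P)$ has supremum $x$. For $x \notin \text{K}(P)$, I would take $B_x := D \cap \twoheaddownarrow x$: given $d_1, d_2 \in B_x$, directedness of $\twoheaddownarrow x$ produces $c \in \twoheaddownarrow x$ with $d_1, d_2 \preceq c$, and since $c \neq x$ (as $x$ is not compact) the key step yields $d_3 \in B_x$ with $c \preceq d_3$, so $B_x$ is directed; the same cofinality of $B_x$ in $\twoheaddownarrow x$ forces $\sqcup B_x = \sqcup \twoheaddownarrow x = x$. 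The main subtlety throughout is the key step, where continuity is crucial to transfer directedness of $\twoheaddownarrow x$ into a strict way-below successor of $b$; once this is in place, everything else is routine.
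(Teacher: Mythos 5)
Your proposal is correct, and it takes a genuinely different route from the paper. The paper handles a non-compact $x$ by taking a directed set $B_x \subseteq \twoheaddownarrow x \cap B$ from a basis and feeding it into Lemma \ref{countable covers}, whose enumeration of the countable $D$ extracts an increasing sequence $(d'_n)_{n \geq 0} \subseteq D$ with $\sqcup (d'_n)_{n\geq0} = x$ and $d'_n \ll x$; the sets $\twoheaduparrow d'_n$ then form the local base, and the second statement is obtained by pooling $(\twoheaduparrow d)_{d \in D} \cup (\twoheaduparrow x)_{x \in \text{K}(P)}$ into a countable topological basis of $\sigma(P)$ and invoking Lemma \ref{second count = w-cont} to convert second countability into $\omega$-continuity. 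You instead prove a single order-theoretic interpolation step --- if $b \ll x$ and $b \neq x$, there is $d \in D$ with $b \preceq d \ll x$ --- using only directedness of $\twoheaddownarrow x$ (with $\sqcup \twoheaddownarrow x = x$, which is the standard consequence of continuity) and Debreu density, and then verify directly that $D \cap \twoheaddownarrow x$ is a directed set cofinal in $\twoheaddownarrow x$, hence with supremum $x$. Your argument for producing the strict successor $c$ of $b$ inside $\twoheaddownarrow x$ is sound ($b \preceq c''$ with $b \neq c''$ forces $b \prec c''$ in a partial order, so the negation makes $b$ an upper bound of $\twoheaddownarrow x$ and hence $b = x$), and the rest is routine as you say. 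Two things your route buys: it is self-contained, avoiding both Lemma \ref{countable covers} and the cited result behind Lemma \ref{second count = w-cont}, and it exhibits the domain-theoretic basis explicitly rather than passing through second countability of $\sigma(P)$; moreover, your key step never uses countability of $D$, so it actually shows that for \emph{any} Debreu dense $D$ the set $D \cup \text{K}(P)$ is a basis, countability entering only as cardinality bookkeeping --- a mild sharpening the paper's proof does not make visible. What the paper's route buys in exchange is a countable \emph{chain} $(d'_n)_{n\geq0}$ converging to $x$ (reused elsewhere, e.g.\ via \eqref{count basis} in Proposition \ref{cont order sep and min coun implies w cont}) and the second-countability statement itself, which the paper exploits later.
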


\begin{proof}
We begin with the first statement. Take $x \in P$, a basis $B\subseteq P$ and a Debreu dense subset $D\subseteq P$. We will show there exists a countable family $(U_n^x)_{n\geq0} \subseteq \sigma(P)$ such that, if $x \in O \in \sigma(P)$, then there exists some $n_0 \geq 0$ such that $x \in U_{n_0}^x \subseteq O$. If $x \in \text{K}(P)$, we take $(U_n^x)_{n\geq0} \coloneqq \{\twoheaduparrow x\} \subseteq \sigma(P)$ and notice, if $O \in \sigma(P)$, then we have $\twoheaduparrow x \subseteq O$ whenever $x \in O$, as $\twoheaduparrow x = \uparrow x \subseteq O$ by definition. If $x \not \in \text{K}(P)$, then there exists some directed set $B_x \subseteq \twoheaddownarrow x \cap B$ such that $\sqcup B_x=x \not \in B_x$ by definition of basis. By Lemma \ref{countable covers}, there exists an increasing sequence $(d'_n)_{n\geq 0} \subseteq D$ (defined by \eqref{def d'_n} and \eqref{def D_A}) such that $\sqcup (d'_n)_{n\geq 0}=x \not \in (d'_n)_{n\geq 0}$. Since, by construction,
there exists some $b_n \in B_x$ such that $d_n \preceq b_n \ll x$, we have $d'_n \ll x$ $\text{for all } n \geq 0$. Thus, if $x \not \in \text{K}(P)$, we take
$U_n^x \coloneqq \twoheaduparrow d'_n$ $\text{for all } n\geq0$ since, given some $O \in \sigma(P)$ such that $x \in O$, then, by definition, there exists some $n_0 \geq0$ such that $d'_{n_0} \in O$ because $\sqcup (d'_n)_{n\geq0} =x$. In particular, $x \in \twoheaduparrow d'_{n_0} \subseteq O$.
Regarding the second statement, we notice, following the first part, $\text{for all } x \in P$, $O \in \sigma(P)$ such that $x \in O$ there exists some $n_0\geq0$ such that $x \in U_{n_0} \subseteq O$, where
\begin{equation}
 \label{count basis}   
(U_n)_{n\geq0} \coloneqq (\twoheaduparrow d)_{d \in D} \cup ( \twoheaduparrow x)_{x \in \text{K}(P)}
\end{equation}
 since
$\text{K}(P)$ is countable by assumption.
We conclude $\sigma(P)$ is second countable and, by Lemma \ref{second count = w-cont}, $P$ is $\omega$-continuous. In particular, $D \cup \text{K}(P)$ is a countable basis. 
\end{proof}

\begin{rem}[Implication for computability]
By Proposition \ref{count Debreu separable}, we can define computable elements (in the sense of Definition \ref{def: comp ele II}) on a continuous dcpo $P$ with a countable Debreu dense subset $D$ and countable compact elements $\text{K}(P)$ whenever $D \cup \text{K}(P)$ is effective. Moreover, if that is the case for two dcpos $P_0,P_1$, then we can define computable functions (in the sense of Definition \ref{def: comp func}) between them $f:P_0 \to P_1$.
\end{rem}


Note that there exist Debreu separable dcpos where $\sigma(P)$ is not first countable (take, for example, the dcpo in \cite[Proposition 5]{hack2022computation}) and continuous Debreu separable dcpos where $\text{K}(P)$ is uncountable (like $P \coloneqq (A \cup B,\preceq)$ with $A \coloneqq \mathbb{R}$, $B \coloneqq (0,1]$ and $x \preceq y$ if and only if $x \leq y$ with $x,y \in B$ or $x \in A$ and $y \in B$, where $\mathbb{Q} \cap B$ is a Debreu dense subset and $\text{K}(P)=A$ is uncountable). Strengthening the order density assumption in Proposition \ref{count Debreu separable}, we can 
improve upon Proposition \ref{order sep has count weak basis} and construct countable \emph{bases} instead of \emph{weak bases}, as we show in Proposition \ref{cont order sep and min coun implies w cont}.

\begin{prop}
\label{cont order sep and min coun implies w cont}
If $P$ is a continuous dcpo with a countable order dense subset $D\subseteq P$, then $D$ is a countable basis for $P\setminus \text{min}(P)$. Furthermore, if $P$ also has a countable Debreu upper dense subset, then $D \cup \text{min}(P)$ is a countable basis for $P$.
\end{prop}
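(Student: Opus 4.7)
The plan is to mimic the proof of Proposition \ref{order sep has count weak basis}, while using the extra structure of a continuous dcpo to upgrade the conclusion from weak basis to basis. Concretely, for each $y \in P \setminus \text{min}(P)$ I need to produce a directed set $D_y \subseteq \twoheaddownarrow y \cap D$ with $\sqcup D_y = y$; the only new obstacle relative to Proposition \ref{order sep has count weak basis} is the containment in $\twoheaddownarrow y$, since order density alone gives chains below $y$ but not necessarily way-below $y$. I would split into two cases according to whether $y$ is compact.

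If $y$ is not compact, continuity provides a directed set $B_y \subseteq \twoheaddownarrow y \cap B$ (for some basis $B$) with $\sqcup B_y = y$ and $y \notin B_y$, so $B_y$ is non-trivial. Because any order dense subset is Debreu dense, Lemma \ref{countable covers} applied with $A = B_y$ delivers an increasing chain $(d_n') \subseteq D$ with $\sqcup (d_n')_{n \geq 0} = y$ such that each $d_n'$ satisfies $d_n' \preceq b_n$ for some $b_n \in B_y$. Since $b_n \ll y$ and $d_n' \preceq b_n$ implies $d_n' \ll y$, the chain $(d_n')$ is the desired directed subset of $\twoheaddownarrow y \cap D$. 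If $y$ is compact, pick any $x \prec y$ (which exists by non-minimality of $y$) and apply Lemma \ref{order dense chain} to produce an increasing chain $D' \subseteq D$ with $\sqcup D' = y$; since $y \ll y$ and $D'$ is directed with $y \preceq \sqcup D'$, there must be $d \in D'$ with $y \preceq d$, forcing $d = y$ by antisymmetry. Hence $y \in D$ and $\{y\}$ is a directed subset of $\twoheaddownarrow y \cap D$ with supremum $y$.

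For the second statement, Lemma \ref{minP countable} ensures $\text{min}(P)$ is countable, so $D \cup \text{min}(P)$ remains countable. It only remains to handle $y \in \text{min}(P)$, and here I would observe that in a continuous dcpo every minimal element is compact: a directed $B_y \subseteq \twoheaddownarrow y \cap B$ with $\sqcup B_y = y$ must consist entirely of elements $\preceq y$, and minimality of $y$ collapses these to $y$ itself, so $B_y = \{y\}$ and $y \ll y$. Consequently $\{y\} \subseteq \twoheaddownarrow y \cap (D \cup \text{min}(P))$ witnesses the basis property for each $y \in \text{min}(P)$, and combining with the first statement yields that $D \cup \text{min}(P)$ is a countable basis for $P$. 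The main subtlety in the whole argument is the non-compact case, where $\prec$ does not imply $\ll$ in general; continuity resolves this by letting us transport the way-below relation from the basis $B_y$ down to chain elements in $D$ via the sandwich $d_n' \preceq b_n \ll y$.
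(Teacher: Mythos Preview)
Your argument is essentially the paper's, packaged slightly differently: the paper first isolates the fact $K(P)=\text{min}(P)$ as a separate lemma and then invokes Proposition \ref{count Debreu separable}, whereas you split each $y\in P\setminus\text{min}(P)$ into compact and non-compact cases directly. The non-compact case is handled exactly as in the paper (the sandwich $d_n'\preceq b_n\ll y$ is precisely the mechanism used in the proof of Proposition \ref{count Debreu separable}), and the second statement via Lemma \ref{minP countable} and $\text{min}(P)\subseteq K(P)$ is likewise the same.

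There is, however, a genuine slip in your compact case. The chain $D'=(d_n')_{n\ge0}$ produced by Lemma \ref{order dense chain} satisfies $d_n'\prec y$ \emph{strictly} for every $n$, so $y\notin D'$. When you invoke $y\ll y$ to obtain $d\in D'$ with $y\preceq d$, the conclusion is not ``$d=y$, hence $y\in D$'' but rather a contradiction: $y\preceq d$ is incompatible with $d\prec y$. The correct reading of your own argument is therefore that no non-minimal element can be compact, i.e.\ the compact case is vacuous. This is exactly the content of the paper's Lemma \ref{lem: charac K(P)}, so once you replace ``hence $y\in D$'' by ``contradiction, so this case does not occur,'' your proof is complete and coincides with the paper's.
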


\begin{proof}
We begin with the first statement. In order to do so, we start by showing the following lemma.
\begin{lem}
\label{lem: charac K(P)}
If $P$ is a continuous dcpo with a countable order dense subset $D\subseteq P$, then
\begin{equation*}
    \text{K}(P) = \text{min}(P).
\end{equation*}
\end{lem}

\begin{proof}
$(\subseteq)$ If $y \in P\setminus \text{min}(P)$, then we can construct an increasing chain $(d_n')_{n\geq0} \subseteq D$ (defined as in Lemma \ref{order dense chain}, \eqref{def d'_n II}) with $D\subseteq P$ a countable order dense subset
chosen w.l.o.g. with the property $D \cap \text{min}(P)=\emptyset$, such that $\sqcup (d_n')_{n\geq0}=y$ and $y \not \in (d_n')_{n\geq0}$. Thus, $y \not \in \text{K}(P)$ by Lemma \ref{compact implies contained if supremum} and, hence, $\text{K}(P) \subseteq \text{min}(P)$. $(\supseteq)$ This is the case since $P$ is continuous and, hence, we must have, for all $x \in \text{min}(P)$, $B_x \coloneqq \{x \} \subseteq \twoheaddownarrow x \cap B$ for some basis $B \subseteq P$. That is, $x \in \text{K}(P)$. 
\end{proof}

 We finish noticing $Q \coloneqq P\setminus \text{min}(P)$ is a dcpo where $K(Q)=\emptyset$.
Thus, by \eqref{count basis}, 
$D \subseteq Q$ is a countable basis for $Q$. For the second statement, we notice, if $P$ has a countable Debreu upper dense subset, then $\text{min}(P)$ is countable by Lemma \ref{minP countable} and $\text{K}(P)$ is also countable, as $\text{K}(P) = \text{min}(P)$ by Lemma \ref{lem: charac K(P)}. Thus, by Proposition \ref{count Debreu separable}, $P$ has a countable basis. In fact, we have that $D \cup \text{min}(P)$ is a countable basis for $P$.
\end{proof}

\begin{rem}[Implication for computability]
By Proposition \ref{cont order sep and min coun implies w cont}, we can define computable elements and functions (in the sense of Definitions \ref{def: comp ele II} and \ref{def: comp func}) on a continuous dcpo $P$ with a countable order dense subset $D$ and a countable Debreu upper dense subset whenever $D \cup \text{min}(P)$ is effective.
\end{rem}

As we did with Proposition \ref{imme succ} for Proposition \ref{order sep has count weak basis}, we complement Proposition \ref{cont order sep and min coun implies w cont} with Proposition \ref{imme succ 2}, where we substitute order separability by a weaker property, namely, the existence of a countable Debreu dense subset where each element has a finite number of immediate successors. Notice, unlike Proposition \ref{cont order sep and min coun implies w cont}, Proposition \ref{imme succ 2} holds for the Cantor domain.

\begin{prop}
\label{imme succ 2}
If $P$ is a continuous dcpo with a countable Debreu dense subset $D \subseteq P$ whose elements have a finite number of immediate successors, then $(D \cup \text{K}(P)) \setminus \text{min}(P)$ is a countable basis for $P\setminus \text{min}(P)$.
Furthermore, if $P$ also has a countable Debreu upper dense subset, then $D \cup \text{K}(P)$ is a countable basis for $P$.
\end{prop}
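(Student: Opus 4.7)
The plan is to adapt the structure of the proof of Proposition \ref{imme succ} to the basis setting, combining its $Q_0/Q_1$ dichotomy with continuity and the interaction between compactness and the way-below relation. The main obstacle will be showing countability of $\text{K}(P)\setminus \text{min}(P)$, for which I would argue that no element of $Q_0\setminus (\text{min}(P)\cup D)$ can be compact.

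For the basis property itself, I would split on whether each $y \in P\setminus \text{min}(P)$ is compact. If $y \in \text{K}(P)$, then $y \in \text{K}(P)\setminus \text{min}(P)$ and $\{y\}$ is a directed subset of $\twoheaddownarrow y$ with supremum $y$. If $y \notin \text{K}(P)$, I would fix any basis $B \subseteq P$ (which exists by continuity) and consider $A := \twoheaddownarrow y \cap B$; this $A$ is directed with $\sqcup A = y$ and is non-trivial, since $y \notin \text{K}(P)$ forces $y \notin A$. Feeding $A$ into Lemma \ref{countable covers} yields an increasing sequence $(d_n') \subseteq D$ with $\sqcup (d_n') = y$, and since every $d_n'$ lies below some $b \in A \subseteq \twoheaddownarrow y$, the standard fact that $d \preceq b \ll y$ implies $d \ll y$ gives $d_n'\ll y$. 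After passing to a strictly increasing subsequence, only $d_0'$ can be in $\text{min}(P)$, and discarding it if necessary leaves a directed subset of $(D \cup \text{K}(P))\setminus \text{min}(P)$ of the required form.

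The critical step is countability. Since $D$ is countable, it suffices to show $\text{K}(P)\setminus \text{min}(P) \subseteq D \cup Q_1$. Take $y \in \text{K}(P)\setminus \text{min}(P)$ with $y \notin D$; if $y$ were in $Q_0$, I would use the $Q_0$ property together with Debreu density to emulate the construction in \eqref{def d'_n II} and produce a strictly increasing chain $(d_n')\subseteq D$ with all $d_n' \prec y$ (the assumption $y \notin D$ prevents the Debreu-dense witnesses from coinciding with $y$) and $\sqcup (d_n') = y$. Compactness of $y$ would then force $y \preceq d_n'$ for some $n$, contradicting $d_n' \prec y$. Hence $y \in Q_1$. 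The argument from the proof of Proposition \ref{imme succ} (using the finite-immediate-successors hypothesis) bounds $Q_1 \setminus D$ by a countable set of immediate successors, so $(D\cup \text{K}(P))\setminus \text{min}(P)$ is countable, completing the first statement.

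For the furthermore statement, the additional countable Debreu upper dense subset makes $\text{min}(P)$ countable by Lemma \ref{minP countable}, so $D \cup \text{K}(P)$ is countable. Every $y \in \text{min}(P)$ is compact (the only directed set with supremum $y$ is $\{y\}$), so $\{y\} \subseteq \text{K}(P)$ is a valid directed subset way-below $y$; combining this with the first statement yields that $D \cup \text{K}(P)$ is a countable basis for $P$.
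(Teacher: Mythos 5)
Your proof is correct and follows essentially the same route as the paper's: the same $Q_0$/$Q_1$ dichotomy inherited from Proposition \ref{imme succ}, the same emulation of Lemma \ref{order dense chain} combined with Lemma \ref{compact implies contained if supremum} to obtain $\text{K}(P)\setminus\text{min}(P) \subseteq D \cup Q_1$, the same finite-immediate-successors count for $Q_1$, and Lemma \ref{minP countable} for the furthermore part. The one presentational difference is that the paper delegates the basis property to the proof of Proposition \ref{count Debreu separable} (countable topological basis, second countability, then Lemma \ref{second count = w-cont}), whereas you verify the order-theoretic basis property directly via Lemma \ref{countable covers} together with the fact that $d \preceq b \ll y$ implies $d \ll y$; since that is precisely the computation inside Proposition \ref{count Debreu separable}'s proof, the mathematical content coincides, and your explicit handling of the $\text{min}(P)$-exclusion (passing to a strictly increasing subsequence and discarding $d_0'$) makes explicit a detail the paper leaves implicit.

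Two small repairs are needed. First, in the furthermore part, your parenthetical justification that minimal elements are compact --- ``the only directed set with supremum $y$ is $\{y\}$'' --- does not by itself yield $y \ll y$ in a general dcpo: in the example of Lemma \ref{compact implies contained if supremum}, $0$ is minimal and its only directed set with supremum $0$ is $\{0\}$, yet $0 \notin \text{K}(P)$. You must invoke continuity, as in Lemma \ref{lem: charac K(P)}: a directed set $B_y \subseteq \twoheaddownarrow y \cap B$ with $\sqcup B_y = y$ is forced to equal $\{y\}$ by minimality, which gives $y \ll y$. Second, the claim that $A \coloneqq \twoheaddownarrow y \cap B$ is itself directed is a standard interpolation fact not established in the paper's framework; it is cleaner to take $A \coloneqq B_y$ directly from the definition of basis, noting $y \notin B_y$ because $y \notin \text{K}(P)$. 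Both fixes are one-liners and do not affect the structure of your argument.
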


\begin{proof}
The first statement can be shown by emulating Lemma \ref{order dense chain} (as in the proof of Proposition \ref{imme succ}) to obtain, after applying Lemma \ref{compact implies contained if supremum} as in Lemma \ref{lem: charac K(P)}, that $\text{K}(P) \subseteq \text{min}(P) \cup Q_1$, with $Q_1$ defined as in \eqref{def Q1}, that is,
\begin{equation*}
    Q_1 = \{ y \in P| \exists x \in P \text{ s.t. } (x,y)=\emptyset\}.
\end{equation*} 
Since $Q_1$ is countable, we follow the proof of Proposition \ref{count Debreu separable} and conclude $P\setminus \text{min}(P)$ has a countable basis. For the second statement, we rely again on Lemma \ref{minP countable} and obtain that $\text{min}(P)$ is countable. We conclude that $\text{K}(P)$ is also countable and, thus, that $D \cup \text{K}(P)$ is a countable basis for $P$.
\end{proof}

\begin{rem}[Implication for computability]
By Proposition \ref{imme succ 2}, we can define computable elements and functions (in the sense of Definitions \ref{def: comp ele II} and \ref{def: comp func}) on a continuous dcpo $P$ with a countable Debreu dense subset $D$ whose elements have a finite number of immediate successors and a countable Debreu upper dense subset whenever $D \cup \text{K}(P)$ is effective.
\end{rem}

As we show in Proposition \ref{strong prop 4}, we cannot improve upon Proposition \ref{imme succ 2} eliminating the assumption about the finiteness of the immediate successors of $D$. In particular, there are continuous Debreu upper separable dcpos where $\text{K}(P)$, and, hence, any basis, is uncountable. Actually, the result remains false if we add the assumption that $x \preceq y$ implies $x \ll y$ $\text{for all } x,y \in P$. We will return to this assumption in Section \ref{upper comp Debreu}.

\begin{prop}
\label{strong prop 4}
There exist continuous Debreu upper separable dcpos $P$ where $x \preceq y$ implies $x \ll y$ $\text{for all } x,y \in P$, but for which $\text{K}(P)$ is uncountable and, thus, no countable bases exist.
\end{prop}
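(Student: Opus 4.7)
The plan is to exhibit an explicit two-level counterexample. Fix a countable set $D$, viewed with the trivial (antichain) order, and fix an almost-disjoint family $\{S_r\}_{r\in\mathbb{R}}$ of infinite subsets of $D$, i.e., $|S_r\cap S_{r'}|<\aleph_0$ for $r\neq r'$; such a family of cardinality $\mathfrak{c}$ is standard. Define $P:=D\cup\{y_r:r\in\mathbb{R}\}$ and let the strict order on $P$ be exactly $\{(d,y_r):d\in S_r\}$, so that $D$ is an antichain of minimal elements, $\{y_r\}_{r\in\mathbb{R}}$ is an antichain of maximal elements, and $d\prec y_r$ iff $d\in S_r$.

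The main step is to show that every directed $A\subseteq P$ contains its supremum. If $|A|\geq 2$, $A$ cannot contain two distinct tops $y_r,y_{r'}$, since these are incomparable and nothing in $P$ lies above them; it cannot be contained in $D$ alone, since distinct elements of the antichain $D$ have no common upper bound in $D$. Hence $A\subseteq \{y_r\}\cup S_r$ for some unique $r$, and $\sqcup A=y_r\in A$. Consequently, every element of $P$ is compact, $x\preceq y\Rightarrow x\ll y$ holds on all of $P$, and $P$ is trivially a continuous dcpo with basis $B=P$. In particular $\text{K}(P)=P$ is uncountable.

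Next I would verify that $D$ itself is a countable Debreu dense and Debreu upper dense subset of $P$. The only strict pairs are $d\prec y_r$ with $d\in S_r$, and such a pair is witnessed by $d$ itself. For incomparable pairs the only subtle case is $y_r\bowtie y_{r'}$ with $r\neq r'$: almost-disjointness provides some $d\in S_{r'}\setminus S_r$, giving $y_r\bowtie d\preceq y_{r'}$; the remaining cases ($d\bowtie d'$ inside $D$, or $d\bowtie y_r$ with $d\in D\setminus S_r$, together with their $\preceq$-flips) are handled by taking the element on the $\preceq$-side or any member of the relevant $S_r$. This gives Debreu upper separability.

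Since every basis of a continuous dcpo must contain $\text{K}(P)$, and here $\text{K}(P)=P$ is uncountable, no countable basis exists, which concludes the argument. The main care-point is the dcpo verification: the pairwise incomparability of the tops together with the antichain structure on $D$ must prevent any directed set from spanning two different tops, and this is precisely the structural property that forces every element to be compact while still allowing the countable $D$ to witness Debreu upper separability through the almost-disjointness of the $S_r$.
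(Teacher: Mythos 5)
Your proposal is correct and is essentially the paper's own argument in abstract form: the paper reuses the flattened Cantor domain $\Sigma^*\cup\Sigma^\omega$ from Proposition \ref{cont Deb upper sep but not w-cont}, which is precisely your construction with $D=\Sigma^*$ and $S_w$ the set of finite prefixes of $w\in\Sigma^\omega$ --- an almost-disjoint family, since distinct infinite words share only finitely many common prefixes. Both proofs then proceed identically: the two-antichain structure forces every directed set to contain its supremum, so $\text{K}(P)=P$ is uncountable and $x \preceq y$ implies $x \ll y$, while the countable bottom antichain witnesses Debreu upper separability exactly through the almost-disjointness you make explicit.
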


\begin{proof}
Take the counterexample from the proof of Proposition \ref{cont Deb upper sep but not w-cont}, $P$. We will show every element in $P$ is compact, $\text{K}(P)=P$. If $x \in \Sigma^\omega$ and $x \preceq \sqcup A$ for some directed set $A$, then $x=\sqcup A$, as there is no other element above $x$. Thus, $x \in A$ since, as we showed in Proposition \ref{cont Deb upper sep but not w-cont}, $A$ would not be directed in the opposite case. Hence, $x \in \text{K}(P)$ and $\Sigma^\omega \subseteq \text{K}(P)$. If $x \in \Sigma^*$ and $x \preceq \sqcup A$ for some directed set $A$, then either $\sqcup A=x$ or $\sqcup A \in \Sigma^\omega$. If $\sqcup A=x$, then $A=\{x\}$ as there is no other element below $x$. If $\sqcup A \in \Sigma^\omega$, then $\sqcup A \in A$, as we argued above, and we have $x \in \text{K}(P)$. Thus, $\Sigma^*\subseteq \text{K}(P)$. We conclude $\text{K}(P)=P$. To finish, we only need some remarks. $\text{K}(P)=P$ implies $P$ is continuous, as we can take the directed set $A =\{x\} \subseteq \twoheaddownarrow x$ $\text{for all } x \in P$. Moreover, since $\text{K}(P) \subseteq B$ for any basis $B$ and $\text{K}(P)$ is uncountable, $P$ is not $\omega$-continuous. Notice, also, if $x \preceq y$, then we have $x \in \text{K}(P)$, which means $x \ll x \preceq y$ and $x \ll y$ $\text{for all } x,y \in P$.
\end{proof}

Since we tried to construct countable bases in Proposition \ref{cont order sep and min coun implies w cont} and Proposition \ref{imme succ 2} using order density properties, we intend now to do the converse. Notice, by Proposition \ref{weak basis implications}, $\omega$-continuous dcpos have countable Debreu upper dense subsets. However, there are $\omega$-continuous dcpos which are not Debreu separable. To see this, we can take the counterexample in Proposition \ref{weak basis not debreu sep} (as the countable weak basis defined there is actually a basis). Alternatively, we can use either majorization for any $n\geq3$ or the interval domain since, as we show in Lemma \ref{example domains} in the Appendix \ref{lemma 5 proof}, both are $\omega$-continuous and any of their Debreu dense subsets has cardinality $\mathfrak{c}$. We summarize this paragraph in the following statement. 

\begin{prop}
\label{w-cont implies Deb upper dense}
If $P$ is a dcpo with a basis $B \subseteq P$, then $B$ is a Debreu upper dense subset of $P$. However, there are $\omega$-continuous dcpos which are not Debreu separable. 
\end{prop}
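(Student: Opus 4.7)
The plan is to split the proof along the two statements and, in each case, reduce it to a previously established result in the paper so that essentially no new work is needed.

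For the first statement, I would observe that any basis $B \subseteq P$ is in particular a weak basis: indeed, by the definition of basis, for every $x \in P$ there is a directed set $B_x \subseteq \twoheaddownarrow x \cap B \subseteq B$ with $\sqcup B_x = x$, which is precisely the defining requirement of a weak basis. Having noted this, Proposition \ref{weak basis implications} applies verbatim and yields that $B$ is a Debreu upper dense subset of $P$. So the first part is immediate once the weak basis/basis inclusion is pointed out.

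For the second statement, the strategy is to exhibit an $\omega$-continuous dcpo that fails to be Debreu separable. The paper offers (at least) three ready-made candidates, and I would simply invoke one of them. The cleanest option is to point to Lemma \ref{example domains}: both the interval domain $(\mathcal{I},\sqsubseteq)$ and majorization $(\Lambda^n,\preceq_M)$ for any $n\geq 3$ are $\omega$-continuous (with countable bases $B_{\mathcal{I}}$ and $\mathbb{Q}^n\cap \Lambda^n$, respectively), yet every Debreu dense subset of either has cardinality $\mathfrak{c}$, hence neither is Debreu separable. Alternatively, one could note that the counterexample constructed in the proof of Proposition \ref{weak basis not debreu sep} actually provides a countable \emph{basis} (not merely a weak basis), since for its elements the approximating directed sets lie in $\twoheaddownarrow x \cap B$; then that dcpo is $\omega$-continuous and, as shown in Proposition \ref{weak basis not debreu sep}, admits no countable Debreu dense subset.

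I do not expect any genuine obstacle here: the first half is one line once the inclusion \emph{basis $\Rightarrow$ weak basis} is invoked, and the second half is a citation of Lemma \ref{example domains}. The only minor point to be careful about is making explicit the logical chain in the second half, namely that $\omega$-continuity is witnessed by the stated countable basis while Debreu non-separability is witnessed by the cardinality lower bound on Debreu dense subsets proved in Lemma \ref{example domains}; these two ingredients together give the required counterexample.
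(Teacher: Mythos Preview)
Your proposal is correct and matches the paper's own argument essentially verbatim: the paper also derives the first claim from Proposition~\ref{weak basis implications} (after noting that a basis is a weak basis) and, for the second claim, invokes exactly the same counterexamples you list, namely the dcpo from Proposition~\ref{weak basis not debreu sep} (whose countable weak basis is in fact a basis) and, alternatively, $(\mathcal{I},\sqsubseteq)$ or $(\Lambda^n,\preceq_M)$ for $n\geq 3$ via Lemma~\ref{example domains}.
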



\subsection{Conditionally connected Debreu upper separable dcpos}
\label{upper comp Debreu}

Although Debreu upper separability and $w$-continuity are not equivalent for general continuous dcpos (cf. Proposition \ref{strong prop 4} and \ref{w-cont implies Deb upper dense}), they do coincide for the Cantor domain $\Sigma^\infty$. In this section, we use one of its properties, \emph{conditional connectedness}, to show, in Theorem \ref{thm 2}, Debreu upper separability and $w$-continuity are equivalent for any conditionally connected dcpo. We begin by defining conditional connectedness. 

\begin{defi}[Conditionally connected partial order]
\label{cond connect}
A partial order $P$ is conditionally connected if, for any pair $x,y \in P$ for which there exists some $z \in P$ such that $x \preceq z$ and $y \preceq z$, we have $x$ and $y$ are comparable, i.e. $\neg(x \bowtie y)$.
\end{defi}

Note that conditional connectedness amounts to comparable elements conforming equivalent classes. We could have named partial orders fulfilling Definition \ref{cond connect} \emph{upward} conditionally connected to distinguish them from \emph{downward} conditionally connected, which has been used in order-theoretical approaches to thermodynamics \cite{roberts1968axiomatic,landsberg1970main,giles2016mathematical} and is referred to as \emph{conditionally connected}. Note that it is straightforward to characterize conditionally connected partial orders as those where every directed set is a chain (see Lemma \ref{upper comp charact} in the Appendix \ref{appen proofs}).

An example of a conditionally connected partial order is the Cantor domain since, if $x,y,z \in \Sigma^{\infty}$ are three strings such that $x$ and $y$ are prefixes of $z$, then it is clear that either $x$ is a prefix of $y$ or vice versa. A negative example comes from majorization
$\text{for all } n\geq 3$, as one can pick, for example, the pair $x,y \in \Lambda^n$ where  $x=(0.6,0.2,0.2,0,..,0)$ and $y=(0.5,0.4,0.1,0,..,0)$, and, although $x,y \preceq (1,0,..,0) \in \Lambda^n$ holds, we have $x \bowtie y$.
Note that conditional connectedness weakens a common property in order theory and its applications, namely, totality \cite{debreu1954representation,lieb1999physics,bridges2013representations}.\footnote{A partial order $P$ is said to be \emph{total} if any pair $x,y \in P$ is comparable \cite{bridges2013representations}.}

Even though conditionally connected dcpos may be more suitable to be interpreted in terms of information, many relevant order-theoretical models of uniform computability are not conditionally connected, like majorization, and have no conditionally connected straightforward variation.
Take for example the interval domain \eqref{interval domain}. It is not conditionally connected since, for any $x \in \mathbb{R}$ and $0<\varepsilon_i,\varepsilon'_i$ $i=1,2$ with $\varepsilon_1> \varepsilon'_1$ and $\varepsilon_2 <\varepsilon'_2$, we have $a \coloneqq [x-\varepsilon_1,x+\varepsilon_2] \sqsubset x$ and  $b \coloneqq [x-\varepsilon_1',x+\varepsilon_2'] \sqsubset x$ although $a \bowtie b$. However, in case $\varepsilon_1+\varepsilon_2 < \varepsilon'_1+\varepsilon'_2$, one would say $a$ contains more information than $b$ about \emph{any} $y \in a \cap b$, which is not
reflected by $\sqsubseteq$ although it is meant to represent information. The natural modification of the interval domain which takes care of this is, however, not a dcpo. We would like to define the binary relation $(\mathcal{I},\sqsubseteq_2)$, where $[a,b] \sqsubseteq_2 [c,d] \iff [a,b] \cap [c,d] \neq \emptyset$ and $\ell([a,b]) \geq \ell([c,d])$ with $\ell([a,b]) \coloneqq b-a$ being the length of the interval. Notice in order to turn $\sqsubseteq_2$ into a partial order we need to identify all intervals which share some intersection and have the same length. One can easily see, however, transitivity does not hold, hence, $\sqsubseteq_2$ is not even a partial order.


In the remainder of this section, we study the properties of conditionally connected dcpos to end up showing, in Theorem \ref{thm 2}, the equivalence of Debreu upper separability and $w$-continuity for them. From Proposition \ref{strong prop 4}, we know Debreu upper separability is insufficient to assure $\text{K}(P)$ is countable in a continuous dcpo, which is necessary in order to hope for a countable basis $B \subseteq P$, since $\text{K}(P) \subseteq B$. Nevertheless, as we will see in Proposition \ref{K(P) countable}, $\text{K}(P)$ is countable for conditionally connected Debreu separable dcpos. In order to arrive to that result, we first characterize both the way-below relation (Proposition \ref{way-below upper comp}) and $\text{K}(P)$ (Proposition \ref{charact compact}) for conditionally connected dcpos. 

\begin{prop}[Conditionally connected characterization of $\ll$]
\label{way-below upper comp}
If $P$ is a conditionally connected dcpo, then we have, for all $ x, y \in P$,
 \begin{equation*}
 x \ll y \iff
     \begin{cases}
      x \prec y \text{, or}\\
    x = y \text{ and there is no directed set } A \subseteq P\setminus \{x\}\\
    \text{ such that } \sqcup A = x. 
     \end{cases}
 \end{equation*}
\end{prop}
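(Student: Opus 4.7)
The plan is to prove both implications using the key fact, noted just before the proposition, that in a conditionally connected partial order every directed set is a chain (the referenced Lemma \ref{upper comp charact}). The concrete consequence I will use repeatedly is that whenever two elements $u,v \in P$ share a common upper bound, conditional connectedness forces $\neg(u \bowtie v)$, i.e. $u \preceq v$ or $v \preceq u$. Since the definition of $\ll$ always yields $x \preceq y$, the right-hand side of the biconditional is naturally a case split on whether $x \prec y$ or $x = y$.

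For the ``$\Longrightarrow$'' direction, assume $x \ll y$. Then $x \preceq y$, so either $x \prec y$ (first case, nothing to prove) or $x = y$. In the latter case, suppose for contradiction there is a directed set $A \subseteq P\setminus\{x\}$ with $\sqcup A = x$. Then $y = x \preceq \sqcup A$, so by $x \ll y$ some $a \in A$ satisfies $x \preceq a$; but also $a \preceq \sqcup A = x$, giving $a = x$ by antisymmetry, contradicting $A \subseteq P\setminus\{x\}$.

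For the ``$\Longleftarrow$'' direction I split on the two cases and in each case take a directed set $A$ with $y \preceq \sqcup A$ and produce $a \in A$ with $x \preceq a$. First suppose $x \prec y$. For every $a \in A$, both $a$ and $y$ lie below $\sqcup A$, so conditional connectedness makes them comparable; if $y \preceq a$ for some $a$, transitivity yields $x \preceq a$ and we are done. Otherwise $a \preceq y$ for every $a$, which forces $\sqcup A \preceq y$ and hence $y = \sqcup A$. Now compare each $a \in A$ with $x$ (both below $y$): either some $a$ satisfies $x \preceq a$ (done), or $a \prec x$ for every $a$, which makes $x$ an upper bound of $A$, giving $y = \sqcup A \preceq x$ and contradicting $x \prec y$. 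The case $x = y$ without a nontrivial directed set reaching $x$ is handled analogously: from $x \preceq \sqcup A$ and conditional connectedness, each $a \in A$ is comparable to $x$; if no $a$ satisfies $x \preceq a$ then $a \prec x$ for every $a$, so $A \subseteq P\setminus\{x\}$ and $\sqcup A \preceq x \preceq \sqcup A$, i.e.\ $\sqcup A = x$, contradicting the hypothesis.

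The main conceptual step is realizing that conditional connectedness turns the usual obstruction to $\ll$ into a dichotomy that can be eliminated by comparing $\sqcup A$ with either $y$ or $x$; no delicate construction of a directed set is needed. The only mildly subtle point is handling the boundary $y = \sqcup A$ in the first sub-case of the reverse direction, where one must reuse conditional connectedness a second time to compare the members of $A$ with $x$ rather than with $y$.
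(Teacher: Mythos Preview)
Your proof is correct and follows essentially the same approach as the paper's, with both directions hinging on conditional connectedness forcing comparability of any two elements sharing a common upper bound. The paper's reverse argument for $x \prec y$ is marginally more direct---it compares each $a \in A$ with $x$ immediately (both lie below $\sqcup A$) and concludes either $\exists\, a$ with $x \preceq a$ or $x = y = \sqcup A$, avoiding your intermediate detour through comparing $a$ with $y$ first---but the substance is identical.
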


\begin{proof}
Take $x,y \in P$. If $x \ll y$ then by definition we have $x \preceq y$ and either $x \neq y$, implying $x \prec y$, or $x \in \text{K}(P)$ and there is no directed set $A \subseteq P\setminus \{x\}$  such that $\sqcup A = x$  by Lemma \ref{compact implies contained if supremum}. For the converse, we first show, if $P$ is a conditionally connected dcpo,
$x\preceq y$ and $A$ is a directed set such that $y \preceq \sqcup A$, then either there exists $a \in A$ such that $x \preceq a$ or $x=y=\sqcup A$. Consider, thus, a directed set $A$ such that $y \preceq \sqcup A$ and $x \preceq y$. We have $x \preceq \sqcup A$ by transitivity and $\neg(x \bowtie a)$ $\text{for all } a \in A$ by conditional connectedness. If there exists some $a \in A$ such that $x \preceq a$, then we have finished. Otherwise, we have $a \prec x$ $\text{for all } a \in A$, thus, $\sqcup A \preceq x$ by definition of supremum and $x=y= \sqcup A$ by antisymmetry. We can now use this property to finish the proof. If $x \prec y$, then, given any directed set $A$ such that $y \preceq \sqcup A$, we get there exists some $a \in A$ such that $x \preceq a$ by the property we proved above, since we have $x \neq y$. As a consequence, $x \ll y$ by definition of $\ll$. Consider now some $x \in P$ such that there is no directed set $A \subseteq P\setminus \{x\}$ fulfilling $\sqcup A = x$. Given some directed set $A$ such that $x \preceq \sqcup A$, we get $\neg(x \bowtie a)$ $\text{for all } a \in A$ by conditional connectedness. If there exists $a \in A$ such that $x \preceq a$, then $x \ll x$. If the contrary holds, then $a \prec x$ $\text{for all } a \in A$ and $\sqcup A \preceq x$ by definition of supremum. Thus, $x = \sqcup A$ by antisymmetry and $x \in A$ by assumption, contradicting the fact $a \prec x$ $\text{for all } a \in A$.
\end{proof}

Note that the characterization of compact elements in Proposition \ref{way-below upper comp} also holds under the hypothesis that $P$ is continuous, which is, as we show in Proposition \ref{upper comp is conti}, weaker than conditional connectedness. However, $x \prec y$ does not necessarily imply $x \ll y$ in case $P$ is just continuous. (To convince ourselves about this, we can take, for example, majorization and compare \eqref{majorization},
\begin{equation*}
    x \preceq_M y \iff (\text{for all } i<n)\text{ } s_i(x) \leq s_i(y),
\end{equation*}
with \eqref{way-below majo},
\begin{equation*}
x \ll_M y \iff x = \perp \text{ or } s_k(x)<s_k(y) \text{ } \text{for all } k<n.)
\end{equation*}
Notice, also, there is no non-essential information in conditionally connected dcpos, that is, whenever a process converges to some $x\in P$, it must eventually gather the information of any element which contains information about $x$, since $y \prec x$ implies $y \ll x$ $\text{for all } x,y \in P$. Using Proposition \ref{way-below upper comp}, we can characterize the compact elements $\text{K}(P)$ for conditionally connected dcpos. We do so in Proposition \ref{charact compact}, for which we need another definition. 
We say $x \in P$ is \emph{isolated} if there exists some $v_x \in P$ such that $v_x \prec x$ and, $\text{for all } y \in P$, we have $y \preceq v_x$ provided $y \prec x$. We denote the set of isolated elements by $\text{I}(P)$ and note $I(\Sigma^\infty)= \Sigma^*\setminus \{\perp\}$.

\begin{prop}[Conditionally connected characterization of $\text{K}(P)$]
\label{charact compact}
If $P$ is a conditionally connected dcpo, then
\begin{equation*} 
\text{K}(P)= \text{I}(P) \cup \text{min}(P).
\end{equation*}
\end{prop}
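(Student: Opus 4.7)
The plan is to use the characterization of $\text{K}(P)$ in terms of triviality (Lemma \ref{compact implies contained if supremum} together with Proposition \ref{way-below upper comp}): for a conditionally connected dcpo, $x \in \text{K}(P)$ if and only if $x$ is not the supremum of any directed set $A \subseteq P \setminus \{x\}$. With this reformulation, the two inclusions become manageable.

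For the inclusion $\text{I}(P) \cup \text{min}(P) \subseteq \text{K}(P)$, I would argue as follows. If $x \in \text{min}(P)$, then for any directed set $A$ with $\sqcup A = x$ and any $a \in A$ we have $a \preceq x$, forcing $a = x$; thus the only directed set attaining $x$ as supremum is $\{x\}$ itself, so $x$ is trivial and hence compact. If $x \in \text{I}(P)$ with witness $v_x \prec x$, suppose for contradiction that $A \subseteq P \setminus \{x\}$ is directed with $\sqcup A = x$. Every $a \in A$ satisfies $a \preceq x$ and $a \neq x$, hence $a \prec x$, so by the defining property of $v_x$ we get $a \preceq v_x$ for all $a \in A$. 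Then $v_x$ is an upper bound of $A$, giving $x = \sqcup A \preceq v_x$, which contradicts $v_x \prec x$.

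For the reverse inclusion $\text{K}(P) \subseteq \text{I}(P) \cup \text{min}(P)$, take $x \in \text{K}(P) \setminus \text{min}(P)$ and set
\[
D_x \coloneqq \{y \in P \mid y \prec x\}.
\]
This is nonempty because $x \notin \text{min}(P)$. The key step, where conditional connectedness enters, is showing $D_x$ is directed: for any $a, b \in D_x$ we have $a, b \preceq x$, so by conditional connectedness $a$ and $b$ are comparable, and the larger of the two lies in $D_x$ and dominates both. Since $P$ is a dcpo, $v_x \coloneqq \sqcup D_x$ exists and satisfies $v_x \preceq x$. If $v_x = x$, then $D_x$ is a directed set with $\sqcup D_x = x$ and $x \notin D_x$, contradicting the triviality of the compact element $x$ (Lemma \ref{compact implies contained if supremum}). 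Hence $v_x \prec x$, and since every $y \prec x$ lies in $D_x$ by definition, $y \preceq v_x$; this shows $x \in \text{I}(P)$.

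The main obstacle is the directedness of $D_x$ — this is exactly where conditional connectedness is essential, since without it two elements below $x$ could be incomparable and $D_x$ would not be directed. The rest of the argument is then a clean application of the compactness-as-triviality characterization from Proposition \ref{way-below upper comp} and Lemma \ref{compact implies contained if supremum}.
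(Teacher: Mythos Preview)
Your proof is correct and follows essentially the same approach as the paper's. Both arguments hinge on the equivalence (from Proposition~\ref{way-below upper comp}) between compactness and triviality in the conditionally connected setting, and both use the set of strict predecessors $\{y \in P \mid y \prec x\}$, observe it is a chain by conditional connectedness, and analyze its supremum; the only cosmetic difference is that the paper proves the inclusion $\text{K}(P)\subseteq \text{I}(P)\cup\text{min}(P)$ by contrapositive whereas you argue it directly.
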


\begin{proof}
$(\supseteq)$ $\text{min}(P) \subseteq \text{K}(P)$ holds by the characterization of $\text{K}(P)$ in Proposition \ref{way-below upper comp} since, if $x \in \text{min}(P)$, the only directed set $A$ such that $\sqcup A=x$ is $A=\{x\}$. Take $x \in \text{I}(P)$ and assume $x \not \in \text{K}(P)$. Then, there exists a directed set $A \subseteq P\setminus \{x\}$ such that $\sqcup A=x$ by Proposition \ref{way-below upper comp}. Since $a \preceq v_x$ $\text{for all } a \in A$ by definition of isolated element, we would have $x= \sqcup A \preceq v_x$ by definition of supremum, a contradiction. Thus, $\text{I}(P) \subseteq \text{K}(P)$. $(\subseteq)$ Take $x \not \in \text{I}(P) \cup \text{min}(P)$ and define $A_x:= \{z \in P| z \prec x\}$. Since $x \notin \text{min}(P)$, we have $A_x \neq \emptyset$ and, by conditional connectedness, $A_x$ is a chain. In particular, $\sqcup A_x $ exists and, since $a \prec x$ $\text{for all } a \in A_x$, $\sqcup A_x \preceq x$ by definition of supremum. If $\sqcup A_x = x$, then $x \notin \text{K}(P)$ by Proposition \ref{way-below upper comp}, since $x \not \in A_x$. Conversely, $\sqcup A_x \prec x$ would contradict the fact $x \not \in \text{I}(P)$ as, by definition of $\sqcup A_x$, any $y \in P$ such that $y \prec x$ would fulfill $y \preceq \sqcup A_x$. Thus, we could take $v_x \coloneqq \sqcup A_x$ and conclude $x \in \text{I}(P)$, a contradiction. In summary, $x \in \text{K}(P)$ implies $x \in \text{I}(P) \cup \text{min}(P)$.
\end{proof}

Notice, although $\text{I}(P) \cup \text{min}(P) \subseteq \text{K}(P)$, $\text{K}(P) \not \subseteq \text{I}(P) \cup \text{min}(P)$ if we weaken the hypothesis from \emph{conditionally connected} to \emph{continuous} in
Proposition \ref{charact compact} (take, for example, the dcpo in Proposition \ref{strong prop 4}). As we already know from Lemma \ref{minP countable}, $\text{min}(P)$ is countable whenever countable Debreu upper dense subsets exist. Moreover, for conditionally connected dcpos, the set of isolated elements $\text{I}(P)$ is countable as well, if Debreu upper separability holds. Because of the characterization of the compact elements in Proposition \ref{charact compact}, these two facts result in Proposition \ref{K(P) countable}, where we show conditionally connected dcpos which are Debreu upper separable have a countable number of compact elements.

\begin{prop}
\label{K(P) countable}
If $P$ is a conditionally connected and Debreu upper separable dcpo, then $\text{K}(P)$ is countable.
\end{prop}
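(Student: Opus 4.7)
The plan is to combine the characterization $\text{K}(P)=\text{I}(P)\cup\text{min}(P)$ from Proposition \ref{charact compact} with Lemma \ref{minP countable}, which already ensures $\text{min}(P)$ is countable under a countable Debreu upper dense subset. The only remaining task is to show that $\text{I}(P)$ is countable, and for this I would exploit the interplay between the emptiness of the intervals $(v_x,x)$ and the countable Debreu upper separating subset $D\subseteq P$.

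First I would fix a countable $D$ that is simultaneously Debreu dense and Debreu upper dense. For each $x\in\text{I}(P)$, the pair $v_x\prec x$ satisfies $(v_x,x)=\emptyset$, so Debreu density forces any witness $d\in D$ with $v_x\preceq d\preceq x$ to lie in $\{v_x,x\}$. Hence every isolated element satisfies $x\in D$ or $v_x\in D$, and it suffices to bound the cardinality of $\text{I}(P)\setminus D$ by considering the assignment $\phi\colon\text{I}(P)\setminus D\to D$ given by $\phi(x)\coloneqq v_x$.

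The hard part — and really the only nontrivial step — is proving that $\phi$ is injective. Given $v\in D$, I would argue that $\phi^{-1}(v)$ has at most one element by the following chain of reductions. If $x,y\in\phi^{-1}(v)$ are distinct, then neither $x\prec y$ nor $y\prec x$ can hold, since that would place one of them inside $(v,y)=\emptyset$ or $(v,x)=\emptyset$; therefore $x\bowtie y$. Debreu upper density yields some $d\in D$ with $x\bowtie d\preceq y$. Now conditional connectedness, applied to the pair $d,v$ which share the upper bound $y$, forces $d$ and $v$ to be comparable. I would then eliminate all three possibilities: $v\prec d$ would put $d\in(v,y)$, which is empty unless $d=y$, contradicting $y\notin D$; $d\preceq v$ gives $d\preceq v\prec x$, violating $x\bowtie d$; and $d=v$ again yields $d\prec x$, violating $x\bowtie d$. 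Hence $\phi$ is injective.

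Combining these pieces, $\text{I}(P)=(\text{I}(P)\cap D)\cup(\text{I}(P)\setminus D)$ is countable as a union of a subset of $D$ and the image of an injection into $D$; together with Lemma \ref{minP countable} and Proposition \ref{charact compact}, this gives $\text{K}(P)=\text{I}(P)\cup\text{min}(P)$ countable. I expect the bookkeeping of the three incomparability cases above to be the main subtlety, since it is precisely where both conditional connectedness and Debreu upper density (as opposed to mere Debreu density) are used in an essential way.
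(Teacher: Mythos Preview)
Your proposal is correct and follows essentially the same route as the paper: reduce to $\text{K}(P)=\text{I}(P)\cup\text{min}(P)$ via Proposition~\ref{charact compact}, cite Lemma~\ref{minP countable} for $\text{min}(P)$, and then prove $\text{I}(P)$ is countable by showing the map $x\mapsto v_x$ from $\text{I}(P)\setminus D$ into $D$ is injective.

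The one noteworthy difference is in the injectivity step. You invoke conditional connectedness a second time, arguing that $d$ and $v$ must be comparable because they share the upper bound $y$, and then run a three-case analysis. The paper avoids this detour: once $d\prec y$ (which follows from $y\notin D$), the very definition of $v_y$ as the element with $z\prec y\Rightarrow z\preceq v_y$ already gives $d\preceq v_y=v\prec x$, contradicting $x\bowtie d$ in one line. So your explicit appeal to conditional connectedness here is correct but unnecessary; the isolated-element property alone does the work. Conditional connectedness is only genuinely needed for the characterization in Proposition~\ref{charact compact}.
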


\begin{proof}
Since $\text{K}(P)=\text{min}(P)\cup \text{I}(P)$ by Proposition \ref{charact compact} and $\text{min}(P)$ is countable whenever there exists some countable Debreu upper dense set by Lemma \ref{minP countable}, we only need to show $\text{I}(P)$ is countable to get the result. Take $D \subseteq P$ a countable Debreu dense subset and $D':= \text{I}(P)\setminus D$, where $\text{I}(P)$ the set of isolated points. If $x \in D'$, then there exists some $d \in D$ such that $v_x \preceq d \preceq x$, where $v_x \in P$ has the property that $\text{for all } y \in P$ such that $y \prec x$ we have $y \preceq v_x$. By definition of $v_x$ and since $x \not \in D$, $v_x =d$ holds. We define now a map
  \begin{alignat*}{3}
    f: \text{ } &D' &&\rightarrow &&
    \text{ }D\\
    &x &&\mapsto &&\text{ }v_x.
\end{alignat*}
If we show $f$ is injective, then we get $D'$ is countable and, thus, $\text{I}(P)$ also, since we have $\text{I}(P) \subseteq D' \cup D$. Take, thus, $x,y \in D'$ such that $f(x) = f(y)$ and assume $x \neq y$. Notice $x \prec y$ contradicts the definition of $v_y$, since it would mean $v_y = v_x \prec x \prec y$. Equivalently, we can discard $y \prec x$ by definition of $v_x$. If $x \bowtie y$, then, by Debreu upper density, there exists some $d \in D$ such that $x \bowtie d \preceq y$. Since $y \not \in D$, we have $x \bowtie d \prec y$. Thus, $d \preceq v_y = v_x \prec x$ which contradicts, by transitivity, the fact that $x \bowtie d$. Since $x \neq y$ leads to contradiction, we conclude $f(x) = f(y)$ implies $x = y$ $\text{for all } x,y \in D'$ and, hence, $f$ is injective.
\end{proof}

To conclude $\text{I}(P)$ is countable in Proposition \ref{K(P) countable}, it is sufficient for $P$ to be a partial order instead of a dcpo. Notice, also, we do not need the elements of $D$ in its proof to have a finite number of immediate successors as in Proposition \ref{imme succ 2}. In fact, there exist conditionally connected dcpos where any such a $D$ has elements with an infinite number of immediate successors
(take, for example, $P$ the dcpo defined analogously to \eqref{Cantor domain} but using the natural numbers as alphabet $\Sigma \coloneqq \mathbb{N}$ and note that any such $D\subseteq P$ will have elements with a countably infinite number of immediate successors).
Actually, Proposition \ref{K(P) countable} shows any conditionally connected and Debreu upper separable partial order has a countable number of jumps, since the set of jumps is equinumerous to the set of isolated elements for conditionally connected dcpos (one can see that the map
 \begin{alignat*}{3}
    \phi: \text{ } &\text{J}(P) &&\rightarrow &&
    \text{ }\text{I}(P)\\
    &(x,y) &&\mapsto &&\text{  }y
\end{alignat*}
is bijective, where $\text{J}(P)$ denotes the set of jumps of $P$). We improve, thus, on the classical result that the number of jumps is countable for any Debreu separable \emph{total} order \cite{debreu1954representation} (see also \cite[Proposition 1.4.4]{bridges2013representations}), as total partial orders are conditionally connected and Debreu separability coincides with Debreu upper separability for total orders. Notice, if we eliminate conditional connectedness, the number of jumps could be uncountable even if Debreu upper separability holds (take, for example, the dcpo in the proof of Proposition \ref{cont Deb upper sep but not w-cont}). In fact, there are continuous Debreu upper separable dcpos where $\text{K}(P)$ is uncountable (see Proposition \ref{strong prop 4}) and conditionally connected dcpos where $\text{K}(P)$ is uncountable (for example, any uncountable set with the trivial partial order).

Before proving Theorem \ref{thm 2}, we show, in Proposition \ref{upper comp is conti}, conditionally connected dcpos are continuous. This fact plus the countability of $\text{K}(P)$ from Proposition \ref{K(P) countable} and some results from Sections \ref{density in compu} and \ref{cont Debreu separable} will allow us to derive Theorem \ref{thm 2}.

\begin{prop}
\label{upper comp is conti}
If $P$ is a conditionally connected dcpo, then $P$ is a basis for $P$.
\end{prop}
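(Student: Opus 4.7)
The plan is to show that for every $x \in P$, there exists a directed set $B_x \subseteq \twoheaddownarrow x$ (with $B_x \subseteq P$, which is automatic since we are taking the whole space as candidate basis) such that $\sqcup B_x = x$. The whole strategy is to leverage the characterization of $\ll$ provided by Proposition \ref{way-below upper comp}, which already encodes the essential consequences of conditional connectedness, and then split into two cases depending on whether $x$ is trivial in the sense of Definition \ref{trivial elements}.

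First I would fix $x \in P$ and consider the case in which there is no directed set $A \subseteq P \setminus \{x\}$ with $\sqcup A = x$, i.e., $x$ is trivial. By Proposition \ref{way-below upper comp}, this forces $x \ll x$, so $x \in \twoheaddownarrow x$, and taking $B_x \coloneqq \{x\}$ we obtain a (trivially) directed subset of $\twoheaddownarrow x$ whose supremum is $x$. In the complementary case, there exists a directed set $A \subseteq P \setminus \{x\}$ such that $\sqcup A = x$. For every $a \in A$ we have $a \preceq \sqcup A = x$ by the definition of supremum and $a \neq x$ since $A \subseteq P \setminus \{x\}$, so $a \prec x$. Applying Proposition \ref{way-below upper comp} once more, $a \prec x$ implies $a \ll x$, hence $A \subseteq \twoheaddownarrow x$. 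Since $A$ is already directed and has $x$ as supremum, taking $B_x \coloneqq A$ completes the argument.

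The two cases together show that $P$ itself fulfils the definition of a basis. I do not anticipate a real obstacle here: Proposition \ref{way-below upper comp} is doing the heavy lifting, and the only verification that is not entirely immediate is the observation that in the non-trivial case the directed set witnessing $\sqcup A = x$ automatically lies below $x$ in the way-below order. That observation is exactly the $\prec \Rightarrow \ll$ direction of Proposition \ref{way-below upper comp}, which is the characteristic feature that conditional connectedness buys us and which already distinguishes this setting from the merely continuous case (as illustrated by the reminder just after Proposition \ref{way-below upper comp}).
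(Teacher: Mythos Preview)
Your proof is correct and essentially identical to the paper's. The only cosmetic difference is that the paper splits according to whether $x \in \text{K}(P)$ while you split according to whether $x$ is trivial; by the very characterization in Proposition~\ref{way-below upper comp} these two conditions coincide, so the arguments are the same.
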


\begin{proof}
We will show $B \coloneqq P$ is a basis whenever $P$ is conditionally connected. We need to show there exists a directed set $B_x \subseteq \twoheaddownarrow{x} \cap P$ such that $x = \sqcup B_x$ $\text{for all } x \in P$.
If $x \in \text{K}(P)$, we can take $B_x \coloneqq \{x\}$. If $x \not \in \text{K}(P)$, then, by Proposition \ref{way-below upper comp}, there exists a directed set $B_x \subseteq P\setminus \{x\}$ such that $\sqcup B_x = x$.
Thus, $b \prec x$ and, again by Proposition \ref{way-below upper comp}, we have $b \ll x$ $\text{for all } b \in B_x$.
\end{proof}

Notice, if $P$ is conditionally connected, then, by the characterization of $\text{K}(P)$ in Proposition \ref{charact compact} plus the fact it has a basis $B\subseteq P$ (Proposition \ref{upper comp is conti}), any $x \in P\setminus (\text{I}(P) \cup \text{min}(P))$ is a non-trivial element of $B$. If we also assume $P$ has a countable Debreu dense subset $D \subseteq  P$, we can then find an increasing sequence $(x_n)_{n\geq0} \subseteq D$ such that $x = \sqcup (x_n)_{n\geq0}$ and $x \not \in (x_n)_{n\geq0}$ (see Lemma \ref{countable covers}) and we can easily see $x_n \ll x$ $\text{for all } n \geq0$. Thus, if we assume $P$ is a conditionally connected Debreu upper separable dcpo, we can construct a countable basis using $\text{K}(P)$ (countable by Proposition \ref{K(P) countable}) and a countable Debreu dense subset. We show this and its converse in Theorem \ref{thm 2}. 

\begin{teo}
\label{thm 2}
If $P$ is a conditionally connected dcpo, then the following statements are equivalent:
\begin{enumerate}[label={(\arabic*)}]
\item $P$ is Debreu upper separable.
\item $P$ is $\omega$-continuous.
\item $P$ is Debreu separable and $\text{K}(P)$ is countable.
\end{enumerate}
\end{teo}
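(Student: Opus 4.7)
The plan is to prove the cycle $(1) \Rightarrow (2) \Rightarrow (3) \Rightarrow (1)$, leaning throughout on the structural results already established for conditionally connected dcpos.

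For $(1) \Rightarrow (2)$: under conditional connectedness Proposition \ref{upper comp is conti} already gives that $P$ is continuous, and Proposition \ref{K(P) countable} turns Debreu upper separability into countability of $\text{K}(P)$. Debreu upper separability also supplies a countable Debreu dense subset $D$, so the three hypotheses of Proposition \ref{count Debreu separable} are in force and $D \cup \text{K}(P)$ is a countable basis for $P$. For $(2) \Rightarrow (3)$: fix a countable basis $B$. Countability of $\text{K}(P)$ follows from the standard inclusion $\text{K}(P) \subseteq B$: if $x \ll x$ and $B_x \subseteq \twoheaddownarrow x \cap B$ is directed with $\sqcup B_x = x$, then some $b \in B_x$ satisfies $x \preceq b$ by definition of $\ll$, while $b \in \twoheaddownarrow x$ forces $b \preceq x$, so $x = b \in B$. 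For Debreu separability I would show $B$ is itself Debreu dense: given $x \prec y$, take a directed $B_y \subseteq \twoheaddownarrow y \cap B$ with $\sqcup B_y = y$; each $b \in B_y$ shares the upper bound $y$ with $x$, so conditional connectedness makes $b$ and $x$ comparable. If every such $b$ satisfied $b \preceq x$, then $y = \sqcup B_y \preceq x$ would contradict $x \prec y$; hence some $b \in B$ fulfils $x \preceq b \preceq y$.

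For $(3) \Rightarrow (1)$: let $D$ be a countable Debreu dense subset; I claim $D \cup \text{K}(P)$ is Debreu upper dense (it is evidently countable and Debreu dense). Take $x \bowtie y$. If $y \in \text{K}(P)$, set $d \coloneqq y$. Otherwise Proposition \ref{charact compact} gives $y \notin \text{I}(P) \cup \text{min}(P)$, and the argument inside its proof shows the chain $A_y \coloneqq \{z \in P \mid z \prec y\}$ satisfies $\sqcup A_y = y \notin A_y$. Lemma \ref{countable covers} applied to $A_y$ then provides an increasing sequence $(d'_n)_{n \geq 0} \subseteq D$ with $\sqcup (d'_n)_{n\geq 0} = y$ and $y \notin (d'_n)_{n\geq 0}$. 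If some $d'_n$ satisfied $x \preceq d'_n$, we would get $x \preceq y$, contradicting $x \bowtie y$; and if every $d'_n$ satisfied $d'_n \preceq x$, then $y = \sqcup (d'_n)_{n\geq 0} \preceq x$, again contradicting $x \bowtie y$. So some $d'_n$ is incomparable with $x$ and witnesses $x \bowtie d'_n \preceq y$ with $d'_n \in D$.

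The main obstacle is this last implication, where Debreu upper density has to be extracted from bare Debreu density. Conditional connectedness enters only indirectly, via Proposition \ref{charact compact}, to tell us which $y$ require the Lemma \ref{countable covers} construction and which already lie in $\text{K}(P)$; once the chain structure of $A_y$ is in hand, the only alternative to finding an incomparable $d'_n$ is a uniform comparison of the whole sequence with $x$, which collapses to a direct comparison of $x$ and $y$ and contradicts the hypothesis.
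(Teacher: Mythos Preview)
Your proof is correct and follows essentially the same route as the paper's: the same cycle $(1)\Rightarrow(2)\Rightarrow(3)\Rightarrow(1)$, the same appeal to Propositions \ref{upper comp is conti}, \ref{K(P) countable}, and \ref{count Debreu separable} for $(1)\Rightarrow(2)$, the same use of conditional connectedness to upgrade the weak-basis argument of Proposition \ref{weak basis implications} to Debreu density for $(2)\Rightarrow(3)$, and the same $D\cup\text{K}(P)$ witness for Debreu upper density in $(3)\Rightarrow(1)$. The only cosmetic difference is that where the paper invokes Proposition \ref{way-below upper comp} to extract a non-trivial directed set for $y\notin\text{K}(P)$, you instead quote Proposition \ref{charact compact} and build $A_y=\{z\mid z\prec y\}$ explicitly; the content is the same.
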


\begin{proof}
We begin showing $(1)$ implies $(2)$. Given that $\text{K}(P)$ is countable by Proposition \ref{K(P) countable}, we can apply Proposition \ref{count Debreu separable}, since conditionally connected dcpos are continuous by Proposition \ref{upper comp is conti}, and get the result. We proceed now to show $(2)$ implies $(3)$. If $B \subseteq P$ is a countable basis, then $\text{K}(P)\subseteq B$ is also countable.
Take $x,y \in P$ such that $x \prec y$. We can follow Proposition \ref{weak basis implications} to get some $b_0 \in B$ such that $b_0 \preceq y$ and $\neg(b_0 \preceq x)$. Since we have both $b_0 \preceq y$ and $x \prec y$, then $\neg(x \bowtie b_0)$ holds by conditional connectedness.
As a result, we have $x \prec b_0 \preceq y$. Thus, $B$ is a countable Debreu dense subset of $P$.   
We finish showing $(3)$ implies $(1)$. We will show $D' \coloneqq D \cup \text{K}(P)$ is a countable Debreu upper dense subset, where $D\subseteq P$ is a countable Debreu dense subset. Given any pair $x,y \in P$ such that $x \bowtie y$, we need to show there exists some $d \in D'$ such that $x \bowtie d \preceq y$. If $y \in \text{K}(P)$, then we take $d \coloneqq y$ and have finished. If $y \not \in \text{K}(P)$, then there exists a directed set $A \subseteq P$ such that $\sqcup A = y$ and $y\not \in A$ by Proposition \ref{way-below upper comp}. By Lemma \ref{countable covers}, thus, there is a directed set $D_y \subseteq D$ such that $\sqcup D_y = y$ and $y \not \in D_y$. Analogously to the proof of
Proposition \ref{weak basis implications}, we can conclude there exists some $d \in D_y$ such that $d \prec y$ and $\neg(d \preceq x)$. Since $x \prec d$ would imply $x \prec y$ by transitivity, which is a contradiction, we obtain $x \bowtie d \prec y$.
\end{proof}

\begin{rem}[Implication for computability]
The proof of Theorem \ref{thm 2} shows we can introduce uniform computability (for both elements and functions) in conditionally connected dcpos which have a countable subset $D$ which is both Debreu dense and Debreu upper dense provided the countable basis $D \cup \text{I}(P) \cup \text{min}(P)$ is effective.
\end{rem}

We conclude from Theorem \ref{thm 2} the coincidence between Debreu upper separability and $w$-continuity is not a specific fact of the Cantor domain but it holds for any conditionally connected dcpo. Note that we cannot substitute \emph{conditional connectedness} by the weaker pair of hypotheses which includes both continuity of $P$ and the fact $x \preceq y$ implies $x \ll y$ $\text{for all } x,y \in P$. In that scenario, $(1)$ does not imply $(2)$ in Theorem \ref{thm 2} (see Proposition \ref{strong prop 4}). This is relevant, since the second hypothesis in the pair is a rather strong property of conditionally connected dcpos.

In Corollary \ref{upper separable},
we refine Theorem \ref{thm 2},
characterizing a stronger order density property for conditionally connected dcpos. We say a partial order is \emph{upper separable} if there exists a countable subset $D \subseteq P$ which is order dense and such that, for any pair $x,y \in P$ where $x \bowtie y$, there exists some $d \in D$ such that $x \bowtie d \prec y$ \cite{ok2002utility}. Note that the Cantor domain is not upper separable, as it is not order separable. Moreover, if $s \in \Sigma^*$ and $\beta, \gamma \in \Sigma$ $\beta \neq \gamma$, then we have $s\beta \bowtie s\gamma$, although, $\text{for all } t \in \Sigma^*$ such that $t \prec s\beta$, we have $t \prec s\gamma$.

\begin{coro}
\label{upper separable}
If $P$ is a conditionally connected dcpo, then the following statements are equivalent:
\begin{enumerate}[label={(\arabic*)}]
    \item $P$ is upper separable
    \item $P$ is $\omega$-continuous and either $\text{K}(P)=\{\perp\}$ or $\text{K}(P)=\emptyset$ holds.
\end{enumerate}
\end{coro}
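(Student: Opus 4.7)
The plan is to prove the two implications separately, leaning heavily on Theorem \ref{thm 2} and the characterisations of $\ll$ and $\text{K}(P)$ in Propositions \ref{way-below upper comp} and \ref{charact compact}.

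For $(1) \Rightarrow (2)$, I would first observe that upper separability is strictly stronger than Debreu upper separability, since order density implies Debreu density and $x \bowtie d \prec y$ implies $x \bowtie d \preceq y$; thus Theorem \ref{thm 2} already delivers $\omega$-continuity, and it remains only to pin down $\text{K}(P)$. By Proposition \ref{charact compact}, $\text{K}(P) = \text{I}(P) \cup \text{min}(P)$, so I would handle each piece. For $\text{I}(P) = \emptyset$: if $x \in \text{I}(P)$ with witness $v_x \prec x$, order density of $D$ produces $d \in D$ with $v_x \prec d \prec x$; but $d \prec x$ forces $d \preceq v_x$ by the defining property of $v_x$, contradicting $v_x \prec d$. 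For $|\text{min}(P)| \leq 1$: two distinct minimal elements $m_1, m_2$ would be incomparable, and the upper condition of $D$ would provide some $d$ with $d \prec m_2$, contradicting minimality of $m_2$. Finally, if $\text{min}(P) = \{m\}$, I would show $m = \perp$ by contradiction: if $m \not\preceq y$ for some $y$, then $y \prec m$ is ruled out by minimality, so $m \bowtie y$; applying upper separability to the pair $(y, m)$ yields $d \in D$ with $y \bowtie d \prec m$, once more contradicting minimality of $m$.

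For $(2) \Rightarrow (1)$, I would take a countable basis $B$ of $P$ and propose $D' \coloneqq B \setminus \text{K}(P)$, which is still countable. The key observation used in both verifications is that any $y$ appearing as the upper element of the relevant pair is automatically non-compact: if $x \prec y$ or $x \bowtie y$, then $y \neq \perp$, so $y \notin \text{K}(P) \subseteq \{\perp\}$, and hence $y \notin B_y$ for a basis representation $B_y \subseteq \twoheaddownarrow y \cap B$. As in the proof of Theorem \ref{thm 2} $(2) \Rightarrow (3)$, there must exist $b_0 \in B_y$ with $\neg(b_0 \preceq x)$, for otherwise $\sqcup B_y = y \preceq x$. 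For order density with $x \prec y$, conditional connectedness applied to $x, b_0 \preceq y$ gives $x \prec b_0$, and $b_0 \neq y$ combined with $b_0 \preceq y$ gives $b_0 \prec y$. For the upper condition with $x \bowtie y$, $x \preceq b_0$ would force $x \prec y$, excluded by $x \bowtie y$, so $x \bowtie b_0 \prec y$. In either case $b_0 \in D'$, since $b_0 = \perp$ would contradict $\neg(b_0 \preceq x)$.

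The main obstacle is conceptual rather than technical: one has to identify the right witness set in $(2) \Rightarrow (1)$ as the basis with $\perp$ removed (when present), and to recognise that the strictness in the density conditions comes for free because the upper element $y$ is never compact. Once this is seen, both directions reduce to short adaptations of machinery already developed for Theorem \ref{thm 2}.
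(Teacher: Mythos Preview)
Your proof is correct and follows the same overall approach as the paper's: invoke Theorem \ref{thm 2} for $\omega$-continuity, then pin down $\text{K}(P)$, and in the converse direction extract the witnessing elements from a countable basis exactly as in the $(2)\Rightarrow(3)$ and $(3)\Rightarrow(1)$ steps of Theorem \ref{thm 2}. The only minor variation is that you obtain $\text{K}(P)=\text{min}(P)$ via Proposition \ref{charact compact} by eliminating $\text{I}(P)$ directly from order density, whereas the paper cites Lemma \ref{lem: charac K(P)}; the $(2)\Rightarrow(1)$ arguments coincide, your explicit removal of $\text{K}(P)$ from the witness set being harmless but unnecessary since the $b_0$ produced already satisfies $\neg(b_0\preceq x)$ and hence $b_0\neq\perp$.
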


\begin{proof}
We begin by showing $(1)$ implies $(2)$. Take $D \subseteq P$ a countable upper dense subset. By Theorem \ref{thm 2}, we have $P$ is $\omega$-continuous and, by Lemma \ref{lem: charac K(P)}, we obtain that $\text{K}(P) = \text{min}(P)$. We finish by noticing, if we have $x,y \in \text{min}(P)$ $x \not = y$, then $x \bowtie y$ by definition and there exists $d \in D$ such that $x \bowtie d \prec y$ by upper separability, contradicting the fact $y \in \text{min}(P)$. Thus, we have $|\text{min}(P)| \leq 1$. Assume there exists some $x \in \text{min}(P)$ and take $y \in P\setminus \text{min}(P)$. Notice $y \prec x$ contradicts the fact $x \in \text{min}(P)$ and so does, by upper separability, $y \bowtie x$. Thus, we have $x \prec y$ and $x=\perp$. Hence, we either have 
$\text{K}(P)=\{\perp\}$, or $\text{K}(P)=\emptyset$. We finish showing $(2)$ implies $(1)$. Take $B \subseteq P$ a countable basis and $x,y \in P$ such that $x \prec y$. We can follow the argument in Theorem \ref{thm 2} ($(2)$ implies $(3)$) to get some $b_0 \in B$ such that $x \prec b_0 \ll y$. Since $y \neq \perp$, we have $y \not \in \text{K}(P)$ and, thus, $x \prec b_0 \prec y$. Take now $x,y \in P$ such that $x \bowtie y$. Since we have a countable basis, we fulfill the hypotheses in Theorem \ref{thm 2} $(3)$ and we can follow the argument there (when proving $(3)$ implies $(1)$) to conclude there exists some $b_0 \in B$ such that $x \bowtie b_0 \prec y$.
\end{proof}


Notice, a dcpo to which Corollary \ref{upper separable} applies can be found in the proof of Proposition \ref{not w-algebraic}. To finish this section, we notice the Cantor domain is not only $\omega$-continuous but $\omega$-algebraic. A dcpo $P$ is \emph{algebraic} if it has a basis $B\subseteq P$ consisting of compact elements $B = \text{K}(P)$ and \emph{$\omega$-algebraic} if such a basis is countable \cite{abramsky1994domain}. The Cantor domain is actually $\omega$-algebraic, as $B \coloneqq \{\perp\} \cup \Sigma^*$ is a countable basis and $B=\text{min}(P) \cup \text{I}(P) = \text{K}(P)$. Nevertheless, we cannot substitute \emph{$\omega$-continuous} by \emph{$\omega$-algebraic} in Theorem \ref{thm 2}, as we show in Proposition \ref{not w-algebraic}.

\begin{prop}
\label{not w-algebraic}
There exist conditionally connected Debreu upper separable dcpos which are not $\omega$-algebraic.
\end{prop}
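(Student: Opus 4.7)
The plan is to take $P \coloneqq ([0,1],\leq)$, the real unit interval with its standard order, as the counterexample. First I would verify that $P$ is a dcpo: totality makes every directed set a chain, and every chain in $[0,1]$ has a supremum in $[0,1]$ by monotone convergence in $\mathbb{R}$. Conditional connectedness is immediate from totality, since incomparable pairs do not exist.

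Next, I would establish Debreu upper separability via $D \coloneqq \mathbb{Q} \cap [0,1]$: the set $D$ is countable, Debreu dense by density of the rationals in $\mathbb{R}$, and Debreu upper dense vacuously (no pair $x \bowtie y$ exists in a totally ordered set). By Theorem \ref{thm 2}, $P$ is therefore $\omega$-continuous; $D$ itself serves as a countable basis, with $B_x = D \cap [0,x)$ for $x > 0$ and $B_0 = \{0\}$.

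It remains to show $P$ is not $\omega$-algebraic. Using Proposition \ref{charact compact}, which applies by conditional connectedness, I would compute $\text{K}(P) = \text{I}(P) \cup \text{min}(P)$. Clearly $\text{min}(P) = \{0\}$, while $\text{I}(P) = \emptyset$: for any $x \in (0,1]$ and any candidate $v < x$, the midpoint $(v+x)/2$ lies strictly between $v$ and $x$, so no such $v$ can satisfy the defining condition for isolation. Hence $\text{K}(P) = \{0\}$. Since an algebraic basis must be contained in $\text{K}(P)$, the only candidate is $\{0\}$, but it fails to be a basis because $\sqcup\{0\} = 0 \neq x$ for any $x > 0$. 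Thus no algebraic basis exists, let alone a countable one, and $P$ is not $\omega$-algebraic. The construction is elementary and presents no real obstacle; the only step needing care is the identification of $\text{K}(P)$, which is handled directly by Proposition \ref{charact compact}.
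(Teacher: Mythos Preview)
Your proposal is correct and takes essentially the same approach as the paper: both use $P=([0,1],\leq)$ with $D=\mathbb{Q}\cap[0,1]$ as the witnessing countable dense subset and observe that $\text{K}(P)=\{0\}$, so $P$ cannot be $\omega$-algebraic. Your version is somewhat more detailed, explicitly invoking Proposition~\ref{charact compact} to compute $\text{K}(P)$ via $\text{I}(P)=\emptyset$ and $\text{min}(P)=\{0\}$, whereas the paper simply asserts $\text{K}(P)=\{0\}$ directly.
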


\begin{proof}
We can consider $P$ the interval $[0,1]$ with the usual order $\leq$. Clearly, it is conditionally connected (it is total) and Debreu upper separable (because of $\mathbb{Q} \cap [0,1]$). However, while $B \coloneqq \mathbb{Q} \cap [0,1]$ is a countable basis, we have $\text{K}(P)=\{0\}$. Thus, $P$ is not $\omega$-algebraic.
\end{proof}


To summarize, the main results in this section are Propositions \ref{count Debreu separable}, \ref{cont order sep and min coun implies w cont} and \ref{imme succ 2}, and Theorem \ref{thm 2}. In the first one, we show how one can profit from Debreu density and the countability of $\text{K}(P)$ to introduce computability on a continuous dcpo. In the second, we avoid the requirement on $\text{K}(P)$ and use the stronger property of order separability to introduce computability on all non-minimal elements. In the third one, we show that we can achieve the same results as in Proposition \ref{cont order sep and min coun implies w cont} by asking for the existence of a countable Debreu separable subset whose elements only have a finite number of immediate successors. Moreover, we extend computability to the whole dcpo in Propositions \ref{cont order sep and min coun implies w cont} and \ref{imme succ 2} by also requiring the existence of a countable Debreu upper dense subset. Lastly, in Theorem \ref{thm 2}, we use conditional connectedness to show the equivalence between having a countable basis and being Debreu upper separable.

\section{Relating order density with completeness and continuity}
\label{complete and continuous}

We have focused, in Sections \ref{density in compu} and \ref{density in uniform compu}, on the relationship between order density properties and both countable weak bases and bases. However, they are related to other properties of these approaches to computability. 
In this section, we relate them to both order completeness and to a weak form of computable functions, namely, continuity in the Scott topology.

\subsection{Completeness}

Regarding completeness, we show, in Proposition \ref{equi D-sep partial orders}, Debreu separable partial orders where increasing sequences have a supremum are directed complete. Note that this implication was obtained in \cite[Proposition 2.5.1]{martin2000foundation} under a different hypothesis , namely, second countability of the Scott topology. Since second countability of $\sigma(P)$ is equivalent to $\omega$-continuity for continuous dcpos $P$ by Lemma \ref{second count = w-cont}, there are Debreu separable partial orders whose Scott topology in not second countable (see Proposition \ref{strong prop 4}) and vice versa (see Proposition \ref{w-cont implies Deb upper dense}).

\begin{prop}
\label{equi D-sep partial orders}
If $P$ is a Debreu separable partial order, then the following conditions are equivalent:
\begin{enumerate}[label={(\arabic*)}]
\item Every directed set has a supremum.
\item Every increasing sequence has a supremum.
\end{enumerate}
\end{prop}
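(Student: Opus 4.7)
The plan is straightforward: $(1) \Rightarrow (2)$ is immediate since any increasing sequence is a directed set, so the content is entirely in $(2) \Rightarrow (1)$. To prove the latter, I would take an arbitrary directed set $A \subseteq P$ and produce $\sqcup A$ by constructing an increasing sequence inside $A$ (or inside a countable Debreu dense set in its interior) whose supremum, guaranteed by hypothesis $(2)$, serves as $\sqcup A$.

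I would split into two cases. If $A$ has a maximum element $m$, then trivially $\sqcup A = m$. If $A$ has no maximum, the standard argument behind Lemma \ref{always next} still applies: for every $a \in A$ there is $c \in A$ with $a \prec c$ (pick $b \in A$ with $\neg(b \preceq a)$, use directedness to find $c \in A$ with $a,b \preceq c$, then $a \prec c$). Crucially this step does not require $\sqcup A$ to exist in advance; only the absence of a maximum in $A$ is used.

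Then, fixing a countable Debreu dense $D \subseteq P$, I would mimic the proof of Lemma \ref{countable covers} by setting
\begin{equation*}
D_A := \{d \in D \mid \exists\, a,b \in A \text{ s.t. } a \preceq d \preceq b\}.
\end{equation*}
The set $D_A$ is directed: given $d, d' \in D_A$ with witnesses $b, b' \in A$, directedness of $A$ yields $c \in A$ with $b, b' \preceq c$, the ``no maximum'' step yields $e \in A$ with $c \prec e$, and Debreu density provides $d'' \in D$ with $c \preceq d'' \preceq e$, so $d'' \in D_A$ and $d, d' \preceq d''$. Enumerating $D_A = (d_n)_{n \geq 0}$ and applying the sequential construction \eqref{def d'_n} from Lemma \ref{countable covers} gives a well-defined increasing sequence $(d_n')_{n \geq 0} \subseteq D_A$. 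By hypothesis $(2)$ this sequence has a supremum $s \in P$.

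The final step is to verify $s = \sqcup A$. For every $a \in A$, pick $a' \in A$ with $a \prec a'$, then a $d \in D$ with $a \preceq d \preceq a'$; so $d \in D_A$, and the construction of $(d_n')$ forces $d \preceq d_{n_0}'$ for some $n_0$, giving $a \preceq s$. Conversely, any upper bound $z$ of $A$ automatically dominates every element of $D_A$ (as each $d \in D_A$ lies below some $b \in A$), hence dominates $s$. The main obstacle, which is really only a bookkeeping one, is ensuring that the ``no maximum'' condition on $A$ replaces the ``$\sqcup A \notin A$'' condition used in Lemmas \ref{always next} and \ref{countable covers}, so that the entire argument goes through without presupposing the existence of $\sqcup A$.
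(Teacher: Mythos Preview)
Your proposal is correct and follows essentially the same route as the paper's proof: handle the case where $A$ has a maximum, otherwise note that the argument of Lemma~\ref{always next} goes through using only the absence of a maximum, build the sequence $(d_n')_{n\ge 0}\subseteq D$ via the construction of Lemma~\ref{countable covers}, invoke hypothesis $(2)$ to obtain its supremum, and verify it is $\sqcup A$. Your explicit remark that the ``no maximum'' hypothesis replaces the ``$\sqcup A\notin A$'' assumption so that nothing presupposes the existence of $\sqcup A$ is exactly the point the paper is making when it writes ``we can argue as in Lemma~\ref{always next}''.
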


\begin{proof}
Since $(1)$ implies $(2)$ by definition, we only show the converse is true. Take $A\subseteq P$ a directed set and $D\subseteq P$ a countable Debreu dense subset. If there exists $a' \in A$ such that $a \preceq a'$ $\text{for all } a \in A$, then $\sqcup A=a'$ and we have finished. In the opposite case, we can argue as in Lemma \ref{always next} and obtain that, for every $a \in A$, there exists some $b \in A$ such that $a \prec b$. We can, thus, construct an increasing sequence $(d'_n)_{n\geq0} \subseteq D$ as in Lemma \ref{countable covers} for which $\sqcup (d'_n)_{n\geq0}$ exists by hypothesis. Since $\text{for all } a \in A$ there exists some $n\geq0$ such that $a \preceq d'_n$ by construction of $(d'_n)_{n\geq0}$, we have, by transitivity, that $a \preceq \sqcup (d'_n)_{n\geq0}$ $\text{for all } a \in A$. If we assume there exists some $x \in P$ such that $\text{for all } a \in A$ we have $a \preceq x$, then, as $\text{for all } n \geq0$ there exists some $a_n \in A$ such that $d'_n \preceq a_n$ by construction of $(d'_n)_{n\geq0}$, we have $d'_n \preceq x$ $\text{for all } n \geq0$ by transitivity. Thus, by definition of supremum, $\sqcup (d'_n)_{n\geq0} \preceq x$. We conclude that $\sqcup A = \sqcup (d'_n)_{n\geq0}$. In particular, $\sqcup A$ exists.
\end{proof}

Notice, whenever a partial order is Debreu separable, we can associate to each directed set an increasing sequence (defined by \eqref{def D_A} and \eqref{def d'_n}), which is intimately related to it in the sense that, by Proposition~\ref{equi D-sep partial orders}, the directed set has some element as supremum if and only if the increasing sequence has the same supremum.


\subsection{Continuity}

Debreu separability is also closely related to the Scott topology. In particular, it is related to computable maps between dcpos as well. More precisely, to the wider set of continuous functions in the Scott topology. As we show in Theorem \ref{thm 3},
 continuous functions between dcpos $f:P \to Q$ coincide with sequentially continuous functions whenever $P$ is Debreu separable. In fact, under the same hypothesis, they also coincide with monotones which preserve suprema of increasing sequences. Note that we say a monotone function $f:P \to Q$ \emph{preserves suprema of increasing sequences} if, given any increasing sequence $(x_n)_{n\geq0}$, we have $\sqcup (f(x_n))_{n\geq0} = f(\sqcup (x_n)_{n\geq0})$.

\begin{teo}
\label{thm 3}
If $P$ is a Debreu separable dcpo, $Q$ is a dcpo and $f:P \rightarrow Q$ is a map, then the following are equivalent:
\begin{enumerate}[label={(\arabic*)}]
\item $f$ is sequentially continuous.
\item $f$ is monotone and preserves suprema of increasing sequences.
\item $f$ is continuous.
\end{enumerate}
\end{teo}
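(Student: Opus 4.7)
The plan is to prove the implications in the cycle $(3) \Rightarrow (1) \Rightarrow (2) \Rightarrow (3)$, with Debreu separability entering only in the last step.

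First, $(3) \Rightarrow (1)$ is the standard topological fact that any map which is continuous in a topological sense is sequentially continuous; this requires no countability assumption on $P$ and I would just quote it (see \cite{kelley2017general}).

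For $(1) \Rightarrow (2)$, I would handle monotonicity and preservation of suprema of increasing sequences separately. For monotonicity, I would argue by contrapositive: if $x \preceq y$ but $\neg(f(x) \preceq f(y))$, then the constant sequence $x_n \coloneqq y$ converges to $x$ in $\sigma(P)$ (any Scott-open $O$ containing $x$ is upper closed, hence contains $y = x_n$), while $(f(x_n)) = (f(y))$ fails to converge to $f(x)$ because, by the characterization \eqref{charac order by topo}, there is some $O \in \sigma(Q)$ with $f(x) \in O$ and $f(y) \notin O$. For preservation of suprema of increasing sequences $(x_n)_{n\geq 0}$, I would note that $(x_n)$ converges to $\sqcup(x_n)$ in $\sigma(P)$ (for any Scott-open $O$ containing $\sqcup(x_n)$, inaccessibility by directed suprema gives some $x_{n_0} \in O$, and upper-closedness extends this to all $n \geq n_0$), so by sequential continuity $(f(x_n))$ converges to $f(\sqcup(x_n))$. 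Monotonicity makes $(f(x_n))$ an increasing sequence, so $\sqcup(f(x_n))$ exists and is also a limit of $(f(x_n))$ in $\sigma(Q)$; the two limits coincide because $\sigma(Q)$ is $T_0$ and, by \eqref{charac order by topo}, the limits of a sequence in a $T_0$ space are mutually related in both directions, hence equal by antisymmetry.

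For $(2) \Rightarrow (3)$, which is the main obstacle since this is where Debreu separability is essential, I would take a directed set $A \subseteq P$ and show $f(\sqcup A) = \sqcup f(A)$. If $\sqcup A \in A$, monotonicity of $f$ makes $f(\sqcup A)$ the largest element of $f(A)$ and the identity is immediate. Otherwise, Lemma \ref{always next} applies and, using a countable Debreu dense subset $D \subseteq P$, Lemma \ref{countable covers} produces an increasing sequence $(d_n')_{n \geq 0} \subseteq D$ with $\sqcup(d_n')_{n\geq 0} = \sqcup A$. By hypothesis $(2)$, $f(\sqcup A) = f(\sqcup(d_n')) = \sqcup(f(d_n'))$. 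It remains to identify $\sqcup(f(d_n'))$ with $\sqcup f(A)$, which I would do by cross-domination: for each $a \in A$ the construction of $(d_n')$ in Lemma \ref{countable covers} provides some $n$ with $a \preceq d_n'$, so monotonicity gives $f(a) \preceq f(d_n') \preceq \sqcup(f(d_n'))$, hence $\sqcup f(A) \preceq \sqcup(f(d_n'))$; conversely, for each $n$ there is some $a \in A$ with $d_n' \preceq a$, yielding $f(d_n') \preceq f(a) \preceq \sqcup f(A)$ and thus $\sqcup(f(d_n')) \preceq \sqcup f(A)$. Antisymmetry closes the loop and gives $f(\sqcup A) = \sqcup f(A)$, establishing continuity of $f$.
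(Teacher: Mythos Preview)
Your overall structure matches the paper's proof exactly (same cycle $(3)\Rightarrow(1)\Rightarrow(2)\Rightarrow(3)$, same use of the constant sequence for monotonicity, same invocation of Lemma~\ref{countable covers} and cross-domination for $(2)\Rightarrow(3)$). There is, however, one genuine slip in your argument for $(1)\Rightarrow(2)$.

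You write that the two candidate values $\sqcup(f(x_n))$ and $f(\sqcup(x_n))$ coincide ``because $\sigma(Q)$ is $T_0$ and, by \eqref{charac order by topo}, the limits of a sequence in a $T_0$ space are mutually related in both directions.'' That claim is false: in the two-point dcpo $\{0,1\}$ with $0\preceq 1$, the constant sequence $(1,1,\dots)$ converges in the Scott topology to both $0$ and $1$, yet $\neg(1\preceq 0)$. The $T_0$ axiom does not force distinct limits to be mutually below one another. What actually closes the argument (and what the paper does) is to use the specific structure available here: setting $x\coloneqq\sqcup(f(x_n))$ and $y\coloneqq f(\sqcup(x_n))$, monotonicity gives $f(x_n)\preceq y$ for all $n$, hence $x\preceq y$; conversely, for any $O\in\sigma(Q)$ with $y\in O$, sequential continuity forces some $f(x_{n_0})\in O$, and since $f(x_{n_0})\preceq x$ and $O$ is upper closed, $x\in O$, whence $y\preceq x$ by \eqref{charac order by topo}. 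Antisymmetry then yields $x=y$. Once you replace the incorrect general claim by this concrete argument, your proof is complete and agrees with the paper's.
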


\begin{proof}
We begin showing $(1)$ implies $(2)$. We first prove $f$ is monotone. We will do so by contrapositive, that is, we will show, if $f$ is not monotone, then it is not sequentially continuous. If $f$ is not monotone, there exist $x,y \in P$ such that $x \preceq y$ and $\neg(f(x) \preceq f(y))$. Consider, then, the sequence $(x_n)_{n\geq0}$, where $x_n \coloneqq y$ $\text{for all } n \geq 0$. Note that $(x_n)_{n\geq0}$ converges to $x$ since, given some $O \in \sigma(P)$ such that $x \in O$, we have $x_n = y \in O$ $\text{for all } n \geq 0$ since $x \preceq y=x_n$. However, since we have $\neg(f(x) \preceq f(y))$, then, as we know from \eqref{charac order by topo},
\begin{equation*}
    x \preceq y \iff  x \in O \text{ implies } y \in O \text{ } \text{for all } O \in \sigma(P)
\end{equation*}
and there exists some $O \in \sigma(Q)$ such that $f(x) \in O$ and $f(y) \not \in O$. Hence, $f(x_n)=f(y) \not \in O$ and $(f(x_n))_{n\geq0}$ does not converge to $f(x)$. Thus, $f$ is not sequentially continuous and, by contrapositive, any sequentially continuous $f$ is monotone. We show now $f$ preserves suprema of increasing sequences. Take an increasing sequence $(x_n)_{n \geq 0}$ and notice it converges to $\sqcup (x_n)_{n \geq 0}$ in $\sigma(P)$. This is the case since, given $O \in \sigma(P)$ such that $\sqcup (x_n)_{n \geq 0} \in O$, then, by definition of $\sigma(P)$, there exists some $n_0 \geq0$ such that $x_{n_0} \in O$ and, since $(x_n)_{n \geq 0}$ is increasing and $O$ is upper closed, $x_n \in O$ $ \text{for all } n \geq n_0$. Since $f$ is monotone, we have $f(x_n) \preceq f(\sqcup (x_n)_{n \geq 0})$ $\text{for all } n \geq0$ and $(f(x_n))_{n \geq 0}$ is an increasing sequence, which implies $\sqcup (f(x_n))_{n \geq 0}$ exists. Define, for simplicity, $x:=\sqcup (f(x_n))_{n \geq 0}$ and $y:=f(\sqcup (x_n)_{n \geq 0})$. By definition of supremum, we have $x \preceq y$. Consider now $O \in \sigma(P)$ such that $y \in O$. Since $(x_n)_{n \geq 0}$ converges to $\sqcup (x_n)_{n \geq 0}$ in $\sigma(P)$, we have $(f(x_n))_{n \geq 0}$ converges to $y$ in $\sigma(Q)$ by sequential continuity. Thus, there exists some $n_0 \geq0$ such that $f(x_n) \in O$ $\text{for all } n \geq n_0$. In particular, since $f(x_n) \preceq x$ $\text{for all } n \geq0$ and $O$ is upper closed, we have $x \in O$. Hence, as we have shown, $y \in O$ implies $x \in O$ $\text{for all } O \in \sigma(P)$ and, by  \eqref{charac order by topo}, $y \preceq x$.
We obtain, by antisymmetry,
\begin{equation*}
    \sqcup (f(x_n))_{n \geq 0}=x=y=f(\sqcup (x_n)_{n \geq 0}).
\end{equation*}
In summary, $f$ preserves suprema of increasing sequences.

We show now $(2)$ implies $(3)$. To do so, it is sufficient to show,
under the hypotheses in $(2)$, that $f(\sqcup A)=\sqcup f(A)$ for any directed set $A\subseteq P$. Assume first $\sqcup A \in A$. Since $f$ is monotone, we have $f(a) \preceq f(\sqcup A)$ $\text{for all } a \in A$. If there exists some $z \in Q$ such that $f(a) \preceq z$ $\text{for all } a \in A$, then $f(\sqcup A) \preceq z$. Thus, $\sqcup f(A) = f(\sqcup A)$. Assume now $\sqcup A \notin A$ and take, as in Lemma \ref{countable covers}, an increasing sequence $(d'_n)_{n\geq0}$, such that $\sqcup (d'_n)_{n\geq0} = \sqcup A$, where $\sqcup A \not \in (d'_n)_{n\geq0}$. Notice, by monotonicity of $f$, $(f(d'_n))_{n \geq 0}$ is an increasing sequence, which means $\sqcup (f(d'_n))_{n \geq 0}$ exists and, since $f$ preserves suprema of increasing sequences by hypothesis, we have
\begin{equation*}
    f(\sqcup A) = f(\sqcup (d'_n)_{n\geq0}) = \sqcup  (f(d'_n))_{ n \geq 0}.
\end{equation*}
Thus, if we show $\sqcup (f(d'_n))_{n \geq 0} = \sqcup f(A)$, then the proof is finished. Define, for simplicity, $z_0 \coloneqq \sqcup (f(d'_n))_{n \geq 0}$ and $z_1 \coloneqq \sqcup f(A)$. Since $\text{for all } a \in A$ there exists some $n \geq0$ such that $a \preceq d'_{n_0}$ and $f$ is monotone, we have $f(a) \preceq f(d'_n) \preceq z_0$. Thus, by transitivity, $f(a) \preceq z_0$ $\text{for all } a \in A$ and, by definition of supremum, $z_1 \preceq z_0$. 
Conversely, since $\text{for all } n \geq0$ there exists some $a \in A$ such that $d'_n\preceq a$ and $f$ is monotone, we have $f(d'_n)\preceq f(a) \preceq z_1$ $\text{for all } n \geq0$. Thus, by transitivity, $f(d'_n) \preceq z_1$ $\text{for all } n \geq0$ and, by definition of supremum, $z_0 \preceq z_1$.
By antisymmetry, we conclude $z_0=z_1$. Hence,
\begin{equation*}
    f(\sqcup A) = f (\sqcup d'_n) = \sqcup (f(d'_n))_{ n \geq 0} = \sqcup f(A),
\end{equation*}
that is, $f$ preserves suprema of directed sets.

Lastly, the fact that $(3)$ implies $(1)$ is a well-known topological fact (see \cite{kelley2017general}) for which Debreu separability of $P$ is not needed.
\end{proof}

\begin{rem}[Computability interpretation]
Theorem \ref{thm 3} points towards the fact that, provided $P$ is Debreu separable, a function between dcpos $f:P \to Q$ is computable (in the sense that of Definition \ref{def: comp func}) if and only if it sends computable elements (in particular, sequences of elements converging towards another one) to computable elements. (Note this is not exactly so, since computable function also require an \emph{effectivity} property by \eqref{compu funct def}.)
\end{rem}

Note that Theorem \ref{thm 3} applies to the Cantor domain $\Sigma^\infty$. More importantly, it is interesting to consider the relation between Theorem \ref{thm 3} and the topological countability axioms, like first and second countability. In particular,
the equivalence in Theorem \ref{thm 3} between $(1)$ and $(3)$ also holds under the stronger hypotheses in the first statement of Proposition \ref{count Debreu separable}, since any function $f:X \to Y$ between topological spaces $X$ and $Y$ is sequentially continuous if and only if it is continuous whenever $X$ is a first countable topological spaces \cite{munkres2019topology}.
However, there exist second countable spaces (thus first countable) which are not Debreu separable (see Proposition \ref{strong prop 4}).
Moreover, there are Debreu separable dcpos which are not first countable (see \cite[Proposition 5]{hack2022computation}) and also some which are not topologically separable (like $P \coloneqq (\mathbb{R},=)$, where, if $B \subseteq P$ is a topologically dense subset of $P$, then $B =\mathbb{R}$, since $\{x\} \in \sigma(P)$ for all $x \in P$).
Lastly, Note that the equivalence in Theorem \ref{thm 3} holds for any computable function between dcpos since, whenever they are defined, countable bases exist and, by Lemma \ref{second count = w-cont}, $\sigma(P)$ is second countable.



To clarify the relation between Debreu separable dcpos and the topological countability axioms, we need couple more definitions. In order to achieve them, we use the following notation: Given a sequence $(x_n)_{n \geq 0} \subseteq X$ and a topological space $(X,\tau)$, we denote by $x_n \to x$ the fact that $(x_n)_{n \geq 0}$ converges to $x \in X$ according to $\tau$.
\begin{defi}[Sequentially open sets and sequential spaces]
    If $(X,\tau)$ is a topological space, $O \subseteq X$ is a \emph{sequentially open set} if, for every sequence $(x_n)_{n \geq 0} \subseteq X$ such that $x_n \to x \in O$, there exists some $n_0$ such that $x_n \in O$ for all $n \geq n_0$. Moreover, we say $(X,\tau)$ is a \emph{sequential space} if every sequentially open set $O \subseteq X$ is an open set $O \in \tau$. 
\end{defi}
Intuitively, sequential spaces are those for which convergence is completely determined by sequences (as oposed to the more general scenario, where they are characterized by nets \cite{kelley2017general}.
The equivalence between $(1)$ and $(3)$ in Theorem \ref{thm 3} points towards the fact that the Scott topology of Debreu separable dcpos is completely determined by sequences, since continuity is. This is indeed the case, as we show directly, that is, without using $(2)$, in Proposition \ref{charac Scott open}.

\begin{prop}
\label{charac Scott open}
If $P$ is a Debreu separable dcpo, then $P$ is a sequential space with respect to its Scott topology $\sigma(P)$.
\end{prop}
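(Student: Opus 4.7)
The goal is to show that every sequentially open $O \subseteq P$ is open in $\sigma(P)$, which means proving that $O$ is both upper closed and inaccessible by directed suprema. My plan is to treat these two conditions in order, using upper closedness (once established) as an ingredient for inaccessibility.

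For upper closedness, the plan is to take $x \in O$ and $y \in P$ with $x \preceq y$, and to consider the constant sequence $x_n \coloneqq y$ for all $n \geq 0$. I would argue, exactly as in the proof of Theorem~\ref{thm 3}, that $x_n \to x$ in $\sigma(P)$: indeed, any $U \in \sigma(P)$ containing $x$ must contain $y$ by the characterization~\eqref{charac order by topo} of $\preceq$ via $\sigma(P)$, so every tail of the sequence sits in $U$. Sequential openness of $O$ then forces $y = x_{n_0} \in O$ for some $n_0$.

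For inaccessibility, I would take a directed $A \subseteq P$ with $\sqcup A \in O$ and split on whether $\sqcup A \in A$. The case $\sqcup A \in A$ is immediate. If $\sqcup A \notin A$, this is where Debreu separability enters: by Lemma~\ref{countable covers} applied to the countable Debreu dense subset $D \subseteq P$, the set $D_A = \{d \in D \mid \exists a,b \in A,\ a \preceq d \preceq b\}$ is nonempty (Lemma~\ref{always next} plus Debreu density) and supports an increasing sequence $(d'_n)_{n \geq 0} \subseteq D_A$ with $\sqcup (d'_n)_{n \geq 0} = \sqcup A$. I would then show that this increasing sequence Scott-converges to $\sqcup A$: if $U \in \sigma(P)$ contains $\sqcup A$, inaccessibility of $U$ yields some $d'_{n_0} \in U$, and the increasing character of the sequence together with upper closedness of $U$ propagates this to every later index. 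Since $\sqcup A \in O$, sequential openness produces some $d'_{n_0} \in O$; because $d'_{n_0} \in D_A$, there exists $b \in A$ with $d'_{n_0} \preceq b$, and upper closedness of $O$ (established in the previous paragraph) gives $b \in A \cap O$.

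The main subtlety is that the approximating sequence $(d'_n)$ lives in $D$ rather than in $A$, so an element of $O$ along the sequence does not directly witness inaccessibility for $A$. This is exactly what the $D_A$ construction from Lemma~\ref{countable covers} is designed to handle, as every $d'_n$ is sandwiched between two elements of $A$; coupling this with the already-proved upper closedness of $O$ converts a hit in $D_A \cap O$ into a hit in $A \cap O$. No other step seems to require more than routine use of the Scott topology and the definitions, so this interaction between the two Scott-open conditions is the essential point of the argument.
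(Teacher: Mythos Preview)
Your proposal is correct and matches the paper's own proof essentially step for step: first establish upper closedness of $O$ via the constant sequence $y,y,\dots$ converging to $x$, then handle inaccessibility by invoking Lemma~\ref{countable covers} to obtain the increasing sequence $(d'_n)\subseteq D_A$ with supremum $\sqcup A$, use sequential openness to land some $d'_{n_0}$ in $O$, and finally pass to an element of $A$ above $d'_{n_0}$ via upper closedness. The subtlety you flag---that the approximating sequence lives in $D$ rather than $A$, so one needs the sandwich property of $D_A$ together with the already-proved upper closedness of $O$---is exactly the point the paper's argument turns on as well.
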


\begin{proof}
To prove the result, we only need to show that, if a set $O\subseteq P$ is sequentially open, then it is upper closed and, for all directed sets $A \subseteq P$ such that $\sqcup A \in O$, we have $A \cap O \neq \emptyset$. Take, thus, some $x \in O$ and some $y \in P$ such that $x \preceq y$. We Note that $(x_n)_{n\geq0}$, where $x_n \coloneqq y$ $\text{for all } n\geq0$, converges to $x$ in $\sigma(P)$. Thus, by hypothesis, there exists some $n_0$ such that $x_n = y \in O$ for all $n \geq n_0$. Hence, $O$ is upper closed. Take now a directed set $A \subseteq P$ such that $\sqcup A \in O$. If $\sqcup A \in A$, then $A \cap O \neq \emptyset$ and we have finished. If $\sqcup A \notin A$ and $D \subseteq P$ is a countable Debreu dense subset of $P$, then, by  Lemma \ref{countable covers}, we can construct a sequence $(d'_n)_{n\geq0} \subseteq D$, defined by \eqref{def D_A} and \eqref{def d'_n}, such that $\sqcup (d'_n)_{n\geq0} = \sqcup A$ and $\sqcup A \not \in (d'_n)_{n\geq0}$. Since $(d'_n)_{n\geq0}$ fulfills $d'_n \to \sqcup d'_n = \sqcup A \in O$ by construction, then, by hypothesis, there exists some $n_0 \geq 0$ such that $d'_n \in O$ $\text{for all } n \geq n_0$.
Hence, by definition of $(d'_n)_{n\geq0}$, there exists some $a \in A$ such that $d'_{n_0} \preceq a$ and,
since $O$ is upper closed as we showed before, we conclude $a \in A \cap O \neq \emptyset$. We conclude
$O \in \sigma(P)$.
\end{proof}

To summarize, this section shows that Debreu separability allows us to characterize both the completeness (Proposition \ref{equi D-sep partial orders}) and the Scott topology (Theorem \ref{thm 2} and Proposition \ref{charac Scott open}) of a partial order using only sequences.

\section{Functional countability restrictions}
\label{func restrictions}

We conclude, in this section, relating the usual countability restriction in the order-theoretical approaches to computability with some usual functional restrictions in order theory, namely, countable multi-utilities.

A family $V$ of real-valued functions $v: X \rightarrow \mathbb{R}$ is called a \emph{multi-utility (representation) of $\preceq$} \cite{evren2011multi} if 
\begin{equation*}
x \preceq y \iff v(x) \leq v(y)  \text{ }\text{for all } v \in V \, . 
\end{equation*}
Whenever a multi-utility consists of strict monotones it is called a \emph{strict monotone} (or \emph{Richter-Peleg} \cite{alcantud2016richter}) \emph{multi-utility (representation) of $\preceq$}. In this section, we relate multi-utilities and strict monotone multi-utilities to both bases and weak bases.

We begin, in Proposition \ref{LSC m-u existence}, addressing the existence of multi-utilities for dcpos and their relation to both bases and weak bases. Note that we say a real-valued function $f:X \to \mathbb{R}$ is lower semicontinuous if $f^{-1}((a,\infty))$ is an open set of $\tau_X$, the topology of $X$.

\begin{prop}
\label{LSC m-u existence}
If $P$ is a dcpo and, for all $x \in P$,
\begin{equation}
\label{def: lower sets}
    d(x) \coloneqq \{y \in P| y \preceq x\},
\end{equation}
then $(u_x)_{x \in P}$ is a lower semicontinuous multi-utility, where
\begin{equation*}
    u_x(y) = \begin{cases}
    1 &\text {if } y \in d(x)^c,\\
   0 &\text{otherwise},
    \end{cases}
\end{equation*}
 for all $x,y \in P$. Moreover, if $P$ has a (basis) weak basis $B \subseteq P$, then $(v_b)_{b\in B}$ is a (lower semicontinuous) multi-utility, where
 \begin{equation*}
    v_b(x) = \begin{cases}
    1 &\text {if } b \preceq x \text{ } (b \ll x),\\
   0 &\text{otherwise},
    \end{cases}
\end{equation*}
for all $x \in P$, $b \in B$.
However, the converse of the second statement is not true. 
\end{prop}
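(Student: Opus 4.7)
My plan is to verify the three claims in sequence. For the first claim, I will show that $(u_x)_{x\in P}$ is a multi-utility by a direct transitivity argument: if $x\preceq y$ and $z\in P$, then $y\in d(z)$ implies $x\in d(z)$ by transitivity, so $u_z(x)\leq u_z(y)$; conversely, if $\neg(x\preceq y)$, then taking $z=y$ yields $u_y(x)=1>0=u_y(y)$, contradicting $u_z(x)\leq u_z(y)$ for all $z\in P$. For lower semicontinuity it suffices to show that $u_z^{-1}((a,\infty))=d(z)^c$ is Scott open for every $0\leq a<1$. I will verify both axioms of openness: upper closedness follows from transitivity, and inaccessibility by directed suprema holds because $\sqcup A\not\preceq z$ together with $a\preceq z$ for every $a\in A$ would force $\sqcup A\preceq z$ by definition of supremum.

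For the second statement I will argue uniformly for the weak basis and basis versions. The forward direction $x\preceq y\Rightarrow v_b(x)\leq v_b(y)$ is immediate from transitivity of $\preceq$ (respectively, from the fact that $b\ll x$ and $x\preceq y$ imply $b\ll y$). For the converse, I will use the existence of a directed set $B_x\subseteq B$ (respectively $B_x\subseteq\twoheaddownarrow x\cap B$) with $\sqcup B_x=x$: every $b\in B_x$ satisfies $v_b(x)=1$, hence by hypothesis $v_b(y)=1$, hence $b\preceq y$ (noting that $b\ll y$ implies $b\preceq y$ in the basis case); since $y$ is then an upper bound of $B_x$, we obtain $x=\sqcup B_x\preceq y$. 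Lower semicontinuity in the basis case follows because, for $0\leq a<1$, $v_b^{-1}((a,\infty))$ equals $\twoheaduparrow b$, which is Scott open; this is already invoked in the excerpt, as $(\twoheaduparrow b)_{b\in B}$ is a topological basis of $\sigma(P)$.

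For the final assertion, I will exhibit a small finite counterexample. Consider $P\coloneqq\{a,b,c\}$ with the partial order generated by $a\prec c$ and $b\prec c$, with $a\bowtie b$. Every directed subset of $P$ is a chain with a maximum, so $P$ is a dcpo; moreover every element is compact, so the definitions of $v_b$ for the weak basis and basis versions coincide here. Taking $B\coloneqq\{a,b\}$, I will verify by inspection that $(v_a,v_b)$ is a multi-utility: for each pair $x,y$ with $\neg(x\preceq y)$ one exhibits $b_0\in B$ with $b_0\preceq x$ and $b_0\not\preceq y$ (for instance, $b_0=a$ witnesses $\neg(c\preceq b)$ via $a\preceq c$ and $a\not\preceq b$, and all four remaining incomparable pairs are handled analogously). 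However, $B$ is not a weak basis of $P$: the only directed subsets of $B$ are the singletons $\{a\}$ and $\{b\}$, with suprema $a$ and $b$, so no directed subset of $B$ has supremum $c$. The same argument shows $B$ is not a basis, since $\twoheaddownarrow c\cap B=\{a,b\}$ is not directed. I do not expect any serious obstacle; the main conceptual point is recognising that a single incomparable pair under a common upper bound is enough to break the converse.
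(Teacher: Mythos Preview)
Your proof is correct. The arguments for the first two parts match the paper's: you verify directly that $d(z)^c$ is Scott-open (upper closed by transitivity, inaccessible by directed suprema via the supremum property), and for the second statement you give a clean direct argument for the backward implication using $\sqcup B_x=x$, whereas the paper instead cites its earlier Proposition~\ref{weak basis implications}; both routes are equally short.

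The genuine difference is in the counterexample. You use the three-element poset $\{a,b,c\}$ with $a,b\prec c$ and $a\bowtie b$, which is the minimal witness that a subset $B$ can generate a multi-utility of the prescribed form without being a weak basis. The paper instead reuses the dcpo from Proposition~\ref{cont Deb upper sep but not w-cont} (the set $\Sigma^*\cup\Sigma^\omega$ with its modified order) and takes $B=\Sigma^*$. That example establishes something strictly stronger than what the statement literally asks: not only is $\Sigma^*$ not a weak basis, but the dcpo has \emph{no countable weak basis whatsoever}, while still admitting a countable lower semicontinuous multi-utility of the given shape. This extra strength is the point of the surrounding section on countability restrictions, and it is what the paper exploits immediately afterwards (e.g.\ in the remark following Proposition~\ref{LSC m-u existence} and in Proposition~\ref{lower topo}). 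Your finite example is entirely adequate for the proposition as stated, and has the virtue of being instantly checkable; the paper's choice buys the connection to the countability theme at the cost of invoking two earlier propositions.
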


\begin{proof}
For the first statement, it is easy to see that $(u_x)_{x \in P}$ is a multi-utility for $\sigma(P)$.
To conclude, we show
$d(x)$
is closed in $\sigma(P)$ for all $x \in P$, which implies $u_x$ is lower semicontinuous for all $x \in P$. Since $d(x)$ is a lower set for all $ x \in P$, we only need to show, given any directed set $A \subseteq d(x)$, we have $\sqcup A \in d(x)$. Take $A \subseteq d(x)$ a directed set. Note that $\sqcup A$ exists and $a \preceq x$ for all $ a \in A$ which means, by definition of least upper bound, $\sqcup A \preceq x$ and $\sqcup A \in d(x)$. Thus, $d(x)$ is closed in $\sigma(P)$ for all $x \in P$.
 
 For the second statement, note that $\{v_b\}_{b \in B}$ is a multi-utility by Proposition \ref{weak basis implications}, where $B \subseteq P$ is a weak basis.
 If $B$ is a basis, then $\{v_b\}_{b \in B}$ is a multi-utility again by Proposition \ref{weak basis implications} (in this case, we take $v_b(x) = 1$ if $b \ll x$ $\text{for all } x \in P, b \in B$ and $v_b(x) = 0$ otherwise),
 and $v_b$ is lower semicontinuous $\text{for all } b \in B$ since $B$ is a basis, which implies $\twoheaduparrow{b} \in \sigma(P)$ \cite{abramsky1994domain}.
  
 
 For the third statement, take $P$ the dcpo from the proof of Proposition \ref{cont Deb upper sep but not w-cont} as a counterexample. As shown there, $P$ has no countable weak basis. However, $(w_x)_{x \in \Sigma^*}$ is a countable multi-utility, where 
 $w_x(y) \coloneqq 1$ if $x \preceq y$ and $w_x(y) \coloneqq 0$ otherwise. Note that $w_x$ is lower semicontinuous $\text{for all } x \in \Sigma^*$ since $\Sigma^* \subseteq \text{K}(P)$ as we showed in the proof of Proposition \ref{strong prop 4}.
 \end{proof}
 
  Note that the second statement in Proposition \ref{LSC m-u existence} also holds whenever $B \subseteq P$ is a Debreu dense subset \cite{hack2022representing}. However, as we showed in Propositions \ref{weak basis not debreu sep} and \ref{cont Deb upper sep but not w-cont}, there exist dcpos where either of these hypothesis hold for some countable $B$ and the other does not.
  
  As shown in Proposition \ref{LSC m-u existence}, there exist dcpos where lower semicontinuous multi-utilities exist and the Scott topology is not second countable (note the counterexample in Proposition \ref{LSC m-u existence} is a continuous dcpo and, hence, the existence of a countable basis and second countability of the Scott topology are equivalent \cite{abramsky1994domain}). As we show in the following proposition, the equivalence holds when considering a coarser topology, namely, the lower topology. (Given a partial order $P=(X,\preceq)$, the closed sets of the \emph{lower topology} 
  $\tau_{\preceq}^l$ are the intersections of finite unions of elements in the family $(d(x))_{x \in X}$, which we defined in \eqref{def: lower sets}.)

  Before proving that proposition, we recall some topological concepts that we will use for both its proof and that of the following result, Theorem \ref{thm 4}.
\begin{defi}[Subbasis, net, and net convergence {\cite{munkres2019topology}}]
\label{def: topo}
If $(X,\tau)$ is a topological space, then a \emph{subbasis} is a family of subsets $(O_i)_{i \in I} \subseteq X$ whose union is $X$ and such that any element in the topology $\tau$ can be generated as the union of finite intersections of elements in the subbasis. Moreover, if $I$ is a directed set (in the sense of some preorder $\preceq_I$), a \emph{net} $\{x_\alpha\}_{\alpha \in I} \subseteq X$ is a function $g:I \to X$, $\alpha \mapsto x_\alpha$. Lastly, we say a net $\{x_\alpha\}_{\alpha \in I} \subseteq X$ \emph{converges} to $x \in X$ provided there exists, for each $O \in \tau$, some $\alpha_O \in I$ such that $x_\alpha \in O$ whenever $\alpha_0 \preceq_I \alpha$.
\end{defi}
Note that subbases contrast with the more common topological notion of bases, where the elements in $\tau$ are generated using only \emph{unions} of elements in a basis. Regarding net convergence, it is clear that, if we take a dcpo $P$, we fix $I=D$, we take the identity as $g$, and we use $(P,\sigma(P))$ as our topological space, then any directed set $D \subseteq P$ is a net which converges to $\sqcup D$.
  
  \begin{prop}
\label{lower topo}
If $(P, \tau_{\preceq}^l)$ is the topological space consisting of a dcpo $P$ equipped with the lower topology $\tau_{\preceq}^l$, then there exists a lower semicontinuous countable multi-utility if and only if $\tau_{\preceq}^l$ is second countable.
\end{prop}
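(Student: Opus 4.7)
The plan is to translate between countable lower semicontinuous multi-utilities and countable sub-bases (or bases) of $\tau_\preceq^l$, using the fact that the closed sets of $\tau_\preceq^l$ are lower sets (since they are generated from the closed sub-basis $(d(x))_{x \in P}$) and hence its open sets are upper sets.

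For the forward direction, suppose $(u_n)_{n\geq 0}$ is a countable lower semicontinuous multi-utility. I would consider the countable family
\[
U_{n,q} := u_n^{-1}\big((q,\infty)\big), \qquad n \in \mathbb{N},\ q \in \mathbb{Q},
\]
each of which is open in $\tau_\preceq^l$ by lower semicontinuity of $u_n$. The key step is to show that $(U_{n,q})$ is a sub-basis for $\tau_\preceq^l$. Since $(d(x)^c)_{x\in P}$ is already a sub-basis by the definition of the lower topology, it suffices to express each $d(x)^c$ as a union of sets $U_{n,q}$. The multi-utility property gives that $y \in d(x)^c$ iff $u_n(y) > u_n(x)$ for some $n$, and inserting a rational between $u_n(x)$ and $u_n(y)$ yields $d(x)^c = \bigcup_n \bigcup_{q \in \mathbb{Q},\, q > u_n(x)} U_{n,q}$. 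Taking finite intersections of the countable sub-basis $(U_{n,q})$ then produces a countable basis for $\tau_\preceq^l$, giving second countability.

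For the converse, suppose $(V_n)_{n\geq 0}$ is a countable basis for $\tau_\preceq^l$ and define $u_n := \chi_{V_n}$. Each $u_n$ is lower semicontinuous because $u_n^{-1}((a,\infty))$ is either $\emptyset$, $V_n$, or $P$. To see $(u_n)$ is a multi-utility, the monotonicity of each $u_n$ follows from the fact that every $V_n$ is an upper set (open sets of $\tau_\preceq^l$ are upper). The nontrivial direction is the converse implication: if $\neg(x \preceq y)$, then $d(y)^c$ is an open set containing $x$ but not $y$, so, since $(V_n)$ is a basis, some $V_{n_0}$ satisfies $x \in V_{n_0} \subseteq d(y)^c$, yielding $u_{n_0}(x)=1 > 0 = u_{n_0}(y)$.

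The main technical points to handle carefully are the sub-basis-versus-basis distinction in the forward direction and the verification that open sets of $\tau_\preceq^l$ are upper in the converse, both of which reduce to unfolding the definition of $\tau_\preceq^l$ via the closed sub-basis $(d(x))_{x\in P}$. Neither presents serious difficulty once isolated and stated explicitly.
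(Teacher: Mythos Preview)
Your proposal is correct and follows essentially the same approach as the paper: in both directions you use the same objects (the sets $u_n^{-1}((q,\infty))$ as a countable sub-basis in the forward direction, and indicator functions of basic open sets as the multi-utility in the converse) and the same verifications. The only cosmetic difference is that the paper checks monotonicity of $\chi_{V_n}$ by working directly with closed sets as finite unions of intersections of $d(x)$'s, whereas you phrase it via ``open sets of $\tau_\preceq^l$ are upper''---these are the same observation.
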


\begin{proof}
 Assume first there exists a countable lower semicontinuous multi-utility $(u_n)_{n \geq 0}$. Note, by definition of the lower topology, $(A_x)_{x\in P}$ is a subbasis for $\tau_{\preceq}^l$, where
 \begin{equation*}
     A_x \coloneqq d(x)^c= \{y \in P| \neg(y \preceq x)\}.
 \end{equation*}
 Take $y \in A_x$ for some $x \in P$ and note there exist a pair $n \geq 0$ and $q \in \mathbb{Q}$ such that $y \in O_{n,q} \subseteq A_x$, where $O_{n,q} \coloneqq u_n^{-1}((q,\infty))$ for all $n \geq 0$ and $q \in \mathbb{Q}$. This is the case since we have, by definition, $\neg(y \preceq x)$. Hence, there exists some $n \geq 0$ such that $u_n(x) < u_n(y)$ and, as a result, some $q \in \mathbb{Q}$ such that $y \in O_{n,q} \subseteq A_x$. Note $(O_{n,q})_{n\geq0, q \in \mathbb{Q}} \subseteq \tau^l_{\preceq}$ given that $u_n$ is lower semicontinuous for all $n \geq 0$.
 As a result, $(O_{n,q})_{n\geq 0,q \in \mathbb{Q}}$ is a countable subbasis of $\tau_{\preceq}^l$ and, hence, $\tau_{\preceq}^l$ is second countable. For the converse, take $(B_n)_{n \geq 0}$ a countable basis for $\tau_{\preceq}^l$ and note $(u_n)_{n \geq 0}$ is a lower semicontinuous countable multi-utility, where for all $n \geq 0$ we have $u_n(x) \coloneqq 1$ if $x \in B_n$ and $u_n(x) \coloneqq 0$ otherwise. To see this holds, take $x,y \in P$. If $x \preceq y$ and $y \not \in B_n$ for some $n \geq 0$, then $y \in O$, where $O \subseteq B_n^C$ is some finite union of intersections of sets in the family
$(d(x))_{x \in P}$. Note such an $O$ exists by definition of $\tau_{\preceq}^l$. By transitivity of $\preceq$, $x \in O$. Thus, $x \not \in B_n$ and $u_n$ is monotone for all $ n \geq 0$. If $\neg(x \preceq y)$, then $x \in A_y \in \tau_{\preceq}^l$ and there exists some $n \geq 0$ such that $x \in B_n \subseteq A_y$. Hence, we have $u_n(x) > u_n(y)$ and $(u_n)_{n\geq0}$ is a multi-utility. Note $u_n$ is lower semicontinuous since $B_n$ is open for all $n \geq 0$.
\end{proof}

In particular, note, if $\tau_{\preceq}^l$ is second countable, then there exists a lower semicontinuous countable multi-utility for $\sigma(P)$.

In the following theorem, we show the main results of this section. In particular, we note we can partially reproduce the implications in Theorem \ref{thm 2} by requiring the existence of a finite strict monotone multi-utility instead of conditional connectedness.
  
\begin{teo}
\label{thm 4}
If $P$ is a dcpo with a finite lower semicontinuous strict monotone multi-utility, then the following hold:
\begin{enumerate}[label={(\arabic*)}]
    \item $P$ is $\omega$-continuous if and only if $\text{K}(P)$ is countable.
    \item If $P$ has a (countable) basis $B \subseteq P$, then $B$ is a (countable) Debreu dense and Debreu upper dense subset.
\end{enumerate}
\end{teo}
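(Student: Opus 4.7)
Plan.

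The natural starting point is statement (2), which follows cleanly from the multi-utility hypothesis; I handle it first and then address (1). For (2), Debreu upper density of any basis $B \subseteq P$ is immediate from Proposition \ref{weak basis implications}, since a basis is a weak basis. For Debreu density, fix $x \prec y$ and choose a directed $B_y \subseteq \twoheaddownarrow y \cap B$ with $\sqcup B_y = y$. Each $u_i$ is lower semicontinuous, which, combined with the monotonicity it forces, means $u_i$ is Scott continuous as a map $P \to \mathbb{R}$, so $u_i(y) = \sup_{b \in B_y} u_i(b)$. Strict monotonicity gives $u_i(x) < u_i(y)$, so some $b_i \in B_y$ satisfies $u_i(b_i) > u_i(x)$; directedness of $B_y$, applied over the finitely many indices, produces a common upper bound $b \in B_y$ of all the $b_i$. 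Then $u_i(b) > u_i(x)$ for every $i$ and the multi-utility property delivers $x \preceq b \preceq y$. Countability of $B$ transfers directly.

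For (1), the forward implication is automatic since $\text{K}(P) \subseteq B$ for any basis. For the reverse, my first step is to abstract the argument above into a characterization of the way-below relation: whenever $u_i(y) < u_i(x)$ for all $i$, one has $y \ll x$. The same Scott continuity plus directedness plus multi-utility combination applies to any directed $A$ with $x \preceq \sqcup A$, producing $a \in A$ with $y \preceq a$.

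With this in hand, and letting $v := (u_1,\dots,u_n) : P \to \mathbb{R}^n$, which is injective by the multi-utility property, I would propose the countable candidate basis $B := \text{K}(P) \cup v^{-1}(\mathbb{Q}^n)$. Both parts are countable (the first by hypothesis, the second by injectivity of $v$). For $x \in \text{K}(P)$, the singleton $\{x\}$ is the required directed subset of $\twoheaddownarrow x \cap B$. For $x \notin \text{K}(P)$, the candidate is $B_x := \{ b \in v^{-1}(\mathbb{Q}^n) : u_i(b) < u_i(x) \text{ for all } i \}$, whose elements lie in $\twoheaddownarrow x$ by the characterization above.

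The main obstacle is verifying that $B_x$ is directed with $\sqcup B_x = x$; a priori the image $v(P) \subseteq \mathbb{R}^n$ may miss many rational points, leaving $B_x$ too sparse or even empty. Closing this gap would require showing that the dcpo structure together with the finite LSC strict monotone multi-utility supplies enough rational approximations from below — the abstract analogue of Lemma \ref{arbitrary q} for majorization. I would approach it by taking any directed $A \subseteq P$ with $\sqcup A = x$, using Scott continuity of each $u_i$ to make $v(A)$ accumulate at $v(x)$ coordinatewise, and refining coordinatewise to rational targets that the multi-utility property, together with finiteness of $V$, allows one to realize by elements of $P$ lying below $x$. This finite-dimensional rational interpolation is the technical heart of the proof.
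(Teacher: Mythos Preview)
Your treatment of (2) is correct. The paper takes a slightly different route: it first isolates the implication $x \prec y \Rightarrow x \ll y$ as a lemma and then appeals to the interpolation property of bases to produce $b \in B$ with $x \ll b \ll y$. Your direct argument, using that each $u_i$ is Scott continuous and hence attains its value at $y$ as the supremum over $B_y$, is a clean alternative that avoids interpolation.

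For (1), your characterization of $\ll$ is right and matches the paper, but the proposed basis $B = \text{K}(P) \cup v^{-1}(\mathbb{Q}^n)$ can fail outright. Take $P = ([0,1],\leq)$ with $u_1(x)=x$ and $u_2(x)=x+\sqrt{2}$: this is a finite lower semicontinuous strict monotone multi-utility, $\text{K}(P)=\{0\}$ is countable, yet $v^{-1}(\mathbb{Q}^2)=\emptyset$ since no real $x$ makes both $x$ and $x+\sqrt{2}$ rational. Your candidate basis collapses to $\{0\}$. The rational interpolation you anticipate is available in the \emph{codomain} $\mathbb{R}^n$, but nothing forces the image $v(P)$ to meet $\mathbb{Q}^n$; the abstract analogue of Lemma \ref{arbitrary q} you invoke simply does not hold.

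The paper avoids this by working topologically rather than with preimages of rational points. After establishing continuity of $P$ from the same $\ll$-characterization, it shows that
\[
\bigl(\twoheaduparrow x\bigr)_{x \in \text{K}(P)} \ \cup\ \Bigl(\,\bigcap_{i \le N} u_i^{-1}\bigl((q_i,\infty)\bigr)\Bigr)_{(q_1,\dots,q_N) \in \mathbb{Q}^N}
\]
is a countable basis for $\sigma(P)$: given $x \in O \in \sigma(P)$, continuity yields some $y \in O$ with $y \ll x$; if $y \neq x$ one chooses rationals $q_i$ with $u_i(y)<q_i<u_i(x)$, and the resulting box lies inside $O$ because every element of it is strictly above $y$. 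Second countability plus continuity then gives $\omega$-continuity via Lemma \ref{second count = w-cont}. When an explicit order-theoretic basis is needed (Proposition \ref{effective for strict}), the paper selects one representative of $P$ from each nonempty rational box --- not a point whose image is rational.
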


\begin{proof}
$(1)$ If $P$ is $\omega$-continuous, then, clearly, it is continuous and $\text{K}(P)$ is countable. (The latter follows since $\text{K}(P) \subseteq B$ for any basis $B \subseteq P$ \cite{abramsky1994domain}.) To show the converse holds, we begin proving any dcpo $P$ is continuous whenever a finite lower semicontinuous strict monotone multi-utility exists. In order to do so, we establish first that, under the same hypothesis, $x \prec y$ implies $x \ll y$ for all $ x,y \in P$.

\begin{lem}
\label{preceq to ll}
If $P$ is a dcpo with a finite lower semicontinuous strict monotone multi-utility, then $x \prec y$ implies $x \ll y$ for all $ x,y \in P$.
\end{lem}

\begin{proof}
Fix $(v_n)_{n \leq N}$ a finite lower semicontinuous strict monotone multi-utility and take $ x,y \in P$ such that $x \prec y$ and $D \subseteq P$ a directed set such that $y \preceq \sqcup D$. We intend to show there exists some $d \in D$ such that $x \preceq d$. Note $x \prec \sqcup D$ by transitivity and $v_n(x) < v_n(\sqcup D)$ for all $ n \leq N$ by definition of $(v_n)_{n=1}^N$. If $\sqcup D \in D$, then we can take $d=\sqcup D$
and we have finished. If $\sqcup D \not \in D$, then there exists some $d_1 \in D$ such that $v_1(x) < v_1(d_1) < v_1(\sqcup D)$, since $D$ converges as a net to $\sqcup D$ (see the comment after Definition \ref{def: topo}) and $v_1$ is lower semicontinuous. For the same reason, there exists a set $\{d_1,..,d_N\} \subseteq D$ such that $v_n(x) < v_n(d_n) < v_n(\sqcup D)$ for all $n \leq N$. Given the fact $D$ is directed, there exists some $c_1\in D$ such that $d_1 \preceq c_1$ and $d_2 \preceq c_1$. We define $c_n$ recursively in the same way using $d_n$ and $c_{n-1}$ for all $n$ such that $1< n \leq N$. Note that $d \coloneqq c_{N-1}$ has all the desired properties. In particular, by definition, $d \in D$ and $v_n(x) < v_n(d)$ for all $n \leq N$. By definition of $(v_n)_{n=1}^N$, we have $x \prec d \in D$. Thus, $x \ll y$.

Alternatively, we can also show that $x \prec y$ implies $x \ll y$ for all $x,y \in P$ as follows: Given that $x \prec y$, we have
\begin{equation*}
    y \in U \coloneqq \bigcap_{n \leq N} v_n^{-1}\big((\alpha_n, \infty)\big) \in \sigma(P),
\end{equation*}
where $U$ is open since it is a finite intersection of open sets (which are open by lower semicontinuity of $(v_n)_{n=1}^N$) and $(\alpha_1,\dots,\alpha_N) \in \mathbb R^N$ such that $v_n(x)<\alpha_n<v_n(y)$ for $n=1,\dots,N$. Hence, given a directed set $D$ such that $y \preceq \sqcup D$, we have $\sqcup D \in U \in \sigma(P)$ and, thus, there exists some $d \in D$ such that $d \in U$. Since $v_n(x)<v_n(d)$ for all $n \leq N$, we have $x \prec d$ and we have finished the alternative argument supporting that $x \ll y$ for all $x,y \in P$ such that $x \prec y$.
\end{proof}

We proceed now to show $P$ is continuous whenever a finite lower semicontinuous strict monotone multi-utility exists.

\begin{lem}
\label{strict implies cont}
If $P$ is a dcpo with a finite lower semicontinuous strict monotone multi-utility, then $P$ is continuous.
\end{lem}

\begin{proof}
Take some $x \in P$. We ought to show there exists some directed set $D_x \subseteq P \cap \twoheaddownarrow x$ such that $\sqcup D_x=x$.  If there exists some directed set $D \subseteq P\setminus \{x\}$ such that $\sqcup D =x$, then we have finished, since we have $d \ll x$ for all $d\in D$ given that $d \prec x$ for all $d \in D$. Assume now there is no directed set $D \subseteq P\setminus \{x\}$ such that $\sqcup D=x$. We will show, in this case, $x \in \text{K}(P)$, which concludes the proof that $P$ is continuous by taking $D_x \coloneqq \{x\}$. Consider, hence, a directed set $D \subseteq P$ such that $x \preceq \sqcup D$. If $\sqcup D=x$, then $x \in D$ and we have finished. If $x \prec \sqcup D$, then $x \ll \sqcup D$ by Lemma \ref{preceq to ll} and there exists some $d \in D$ such that $x \preceq d$. Thus, $x \in \text{K}(P)$ in this scenario and $P$ is continuous.
\end{proof}

To conclude, we ought to show $P$ is $\omega$-continuous whenever there exists a finite lower semicontinuous strict monotone multi-utility and $\text{K}(P)$ is countable. To establish this, we show that the family consisting of (a) the union of the sets of elements way-above each compact element and (b) the sets of elements strictly above each set of rational values a finite lower semicontinuous strict monotone multi-utility may take,
\begin{equation}
\label{basis thm 4}
    B \coloneqq  \Big(\twoheaduparrow x\text{ }\Big)_{x \in \text{K}(P)} \bigcup \Big( \bigcap_{n \leq N} v_n^{-1}\big((q_n, \infty)\big)\Big)_{(q_1,..,q_N) \in \mathbb{Q}^N},
\end{equation}
is a countable basis for $\sigma(P)$ whenever $\text{K}(P)$ is countable. In order to show that \eqref{basis thm 4} is indeed a basis, take $x  \in O \in \sigma(P)$ and notice, since $P$ is continuous as we showed, there exists some $y \in O$ such that $y \ll x$. If $y=x$, then $x \in \text{K}(P)$. Thus, $x \in \twoheaduparrow x \subseteq O$. If $y \neq x$, then $y \prec x$. As a result, there exists some $(q_1,..,q_N) \in \mathbb{Q}^N$ such that $v_n(y) < q_n < v_n(x)$ for all $n \leq N$ and $x \in U$, where
\begin{equation*}
    U \coloneqq \bigcap_{n \leq N} v_n^{-1}\big((q_n, \infty)\big).
\end{equation*}
Note $U\subseteq O$ since any $z \in U$ fulfills $v_n(y) <v_n(z)$ for all $ n \leq N$ and, by definition of $(v_n)_{n=1}^N$, we get $y \prec z$, thus $z \in O$ as $O$ is upper closed by definition.

$(2)$ 
We show a (countable) basis $B \subseteq P$ is a Debreu dense and Debreu upper dense subset of $P$. Note $B$ is a Debreu upper dense subset of $P$ by Proposition \ref{weak basis implications}. To conclude, we show $B$ is also Debreu dense. Take, thus, $x,y \in P$ such that $x \prec y$. As we showed in $(1)$, we have $x \ll y$ since there exists a finite lower semicontinuous strict monotone multi-utility. Hence, by the interpolation property \cite[Lemma 2.2.15]{abramsky1994domain}, there exists some $b \in B$ such that $x \ll b \ll y$. In particular, we have $x \preceq b \preceq y$. Hence, $B$ is a (countable) Debreu dense subset of $P$.
\end{proof}

In order to interpret Theorem \ref{thm 4} $(1)$ in terms of computability, we provide an explicit basis (in the dcpo sense) in the following proposition. We do so since the proof of Theorem \ref{thm 4} $(1)$ only shows that $\sigma(P)$ is second countable whenever a dcpo has a finite lower semicontinuous strict monotone multi-utility and $\text{K}(P)$ is countable.
\begin{prop}
\label{effective for strict}
Take $P$ a dcpo with a finite lower semicontinuous strict monotone multi-utility $(v_i)_{i=1}^N$, $T \coloneqq \{(q,r) \in \mathbb Q^N \times \mathbb Q^N | q_i<r_i \text{ for } i=1,\dots,N\}$,  a numeration of $T$, $\gamma : \mathbb N \to T$, whose first (last) $N$ components we denote by $\gamma_1$ ($\gamma_2$),
\begin{equation*}
\begin{split}
m_0 &\coloneqq \text{min} \{n \geq 0| \exists x \in P \text{ s.t. } v_i(\alpha_1(n)) < v_i(x) < v_i(\alpha_2(n))\} \text{ and}\\
m_n &\coloneqq \text{min} \{n \geq m_{n-1}+1| \exists x \in P \text{ s.t. } v_i(\alpha_1(n)) < v_i(x) < v_i(\alpha_2(n))\} \text{ for all } n \geq 1.
\end{split}
\end{equation*}
    If $\text{K}(P)$ is countable, then $\text{K}(P) \cup (t_n)_{n \geq 0}$ is a countable basis, where, for all $n \geq 0$, we take as $t_n$ some $x \in P$ such that $v_i(\gamma_1(m_n))<v_i(x)<v_i(\gamma_2(m_n))$. 
\end{prop}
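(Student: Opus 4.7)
The plan is to verify that $B' := \text{K}(P) \cup (t_n)_{n \geq 0}$ is a basis of $P$ in the dcpo sense; countability follows at once from the hypothesis on $\text{K}(P)$. For each $x \in P$, I would exhibit a directed set $D_x \subseteq \twoheaddownarrow x \cap B'$ with $\sqcup D_x = x$, taking $D_x := \{x\}$ when $x \in \text{K}(P)$ and $D_x := \twoheaddownarrow x \cap B'$ otherwise. Throughout I will exploit: (a) continuity of $P$ from Lemma \ref{strict implies cont}, so that $\twoheaddownarrow x$ is itself directed with supremum $x$; (b) the implication $y \prec x \Rightarrow y \ll x$ from Lemma \ref{preceq to ll}; (c) strict monotonicity of the multi-utility, turning $y \prec z$ into $v_i(y) < v_i(z)$ for every $i$; (d) lower semicontinuity of each $v_i$; and (e) the defining property of $(t_n)$, namely that whenever a rational box $(q,r) \in T$ is realized by some element of $P$, it equals $\gamma(m_n)$ for some $n$ and $t_n$ is one of its realizers.

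For $\sqcup D_x = x$ (assuming $x \notin \text{K}(P)$), $x$ is clearly an upper bound. For the least-upper-bound property, I would suppose some upper bound $y$ satisfies $v_j(y) < v_j(x)$ at some coordinate $j$ and derive a contradiction. Fix a directed $A \subseteq \twoheaddownarrow x$ with $\sqcup A = x$; the Scott-open set $v_j^{-1}((v_j(y),\infty))$ must meet $A$, and by combining with $v_i^{-1}((v_i(x)-\varepsilon,\infty))$ and using directedness of $A$ one obtains an $a \in A$ with $v_j(a) > v_j(y)$ and each $v_i(a)$ arbitrarily close to $v_i(x)$ from below. Choosing rationals $v_j(y) < q_j < v_j(a) < r_j < v_j(x)$ at coordinate $j$ and $q_i < v_i(a) < r_i < v_i(x)$ elsewhere yields a box realized by $a$; the corresponding $t_n$ then satisfies $v_i(t_n) < v_i(x)$ for all $i$ (so $t_n \prec x$, hence $t_n \ll x$, hence $t_n \in D_x$) but $v_j(t_n) > q_j > v_j(y)$, contradicting $t_n \preceq y$.

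For directedness of $D_x$, given $t_m, t_n \in D_x$, continuity makes $\twoheaddownarrow x$ directed, so some $z \ll x$ bounds both $t_m, t_n$. If $z \in \text{K}(P)$, then $z$ already lies in $D_x$. Otherwise, interpolate once more to get $z \ll z' \ll x$; the hypotheses $z \notin \text{K}(P)$ and $x \notin \text{K}(P)$ force $z \prec z'$ and $z' \prec x$, and strict monotonicity then yields $v_i(z) < v_i(z') < v_i(x)$ for every $i$. Choosing rationals $q_i \in (\max\{v_i(t_m), v_i(t_n)\}, v_i(z'))$ and $r_i \in (v_i(z'), v_i(x))$, the box $(q,r)$ is realized by $z'$, so the resulting $t_p$ satisfies $v_i(t_p) > q_i \geq \max\{v_i(t_m), v_i(t_n)\}$ (so $t_m, t_n \preceq t_p$) and $v_i(t_p) < v_i(x)$ (so $t_p \ll x$, hence $t_p \in D_x$).

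The main obstacle is the directedness step: the rational box that produces the common upper bound must lie strictly between $\max\{v_i(t_m), v_i(t_n)\}$ and $v_i(x)$ at every coordinate while still being realized by some element of $P$. The double interpolation supplies an interior witness $z'$ whose multi-utility values are strictly separated from both bounds, and this separation depends crucially on $z \notin \text{K}(P)$ and $x \notin \text{K}(P)$—hence the splitting into the trivial compact case and the non-compact case where strict monotonicity of the multi-utility does the geometric work that replaces the purely topological argument used to establish $\omega$-continuity in Theorem \ref{thm 4}(1).
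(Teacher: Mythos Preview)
Your proof is correct and takes a route genuinely different from the paper's. For $x \notin \text{K}(P)$, the paper constructs an explicit \emph{increasing sequence} $(x_m)_{m \geq 0} \subseteq (t_n)_{n \geq 0}$ by iterating nested rational boxes of width at most $2^{-m}$ around $(v_i(x))_{i=1}^N$: lower semicontinuity and continuity of $P$ produce a realizer strictly below $x$ in each box, the chain condition comes for free from $v_i(x_m) < r^m_i \leq q^{m+1}_i < v_i(x_{m+1})$, and $\sqcup(x_m)=x$ follows from the shrinking widths. You instead take $D_x = \twoheaddownarrow x \cap B'$ in one stroke and verify directedness and the supremum property by two separate box constructions, invoking interpolation in $\twoheaddownarrow x$ together with strict monotonicity to manufacture a common upper bound lying in $(t_n)$. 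Both arguments rest on Lemmas~\ref{preceq to ll} and~\ref{strict implies cont} and on the realizer property of $(t_n)$; the paper's version is more explicitly constructive (in keeping with the computability motivation of the proposition), while yours is cleaner and makes transparent that the whole set $\twoheaddownarrow x \cap B'$ already does the job. One minor point: your directedness paragraph is written for pairs $t_m,t_n \in D_x$, but $D_x$ may also contain compact elements; the argument goes through verbatim for arbitrary $a,b \in D_x$ since you only use $a,b \preceq z$ and monotonicity of the $v_i$, and you should say so explicitly.
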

\begin{proof}
    We ought to show that, for each $x \in P$ and $B \coloneqq \text{K}(P) \cup (t_n)_{n \geq 0}$, there exists a directed set $B_x \subseteq \twoheaddownarrow x \cap B$ such that $\sqcup B_x = x$. If $x \in \text{K}(P)$, then we take $B_x = \{x \}$ and we have finished. If $x \not \in \text{K}(P)$, then consider some $(q^0_1,\dots,q^0_N) \in \mathbb Q^N$ such that $q^0_i<v_i(x)<q^0_i+1$ for $i=1,\dots,N$. Since $(v_i)_{i=1}^N$ is lower semicontinuous and $P$ is continuous by Lemma \ref{strict implies cont}, there exists some $y_0 \in \twoheaddownarrow x \cap P$ such that $q^0_i<v_i(y_0)<v_i(x)$ for $i=1,\dots,N$ (the latter inequalities follow since $x \not \in \text{K}(P)$ and, hence, $y_0 \prec x$).
    We can then consider some $(r^0_1,\dots,r^0_N) \in \mathbb Q^N$ such that $v_i(y_0)<r^0_i<v_i(x)$ for $i=1,\dots,N$. By construction, there exists some $x_0 \in (t_n)_{n \geq 0}$ such that $q^0_i<v_i(x_0)<r^0_i$ for $i=1,\dots,N$.
    We can then take, for all $m>0$,  $q^m_i=r^{m-1}_i$ for $i=1,\dots,N$ (we do this provided $r^{m-1}_i<v_i(x)<r^{m-1}_i+2^{-m}$, otherwise we take some $(q^m_1,\dots,q^m_N) \in \mathbb Q^N$ fulfilling these inequalities), find some $y_m \in \twoheaddownarrow x \cap P$ such that $q^m_i<v_i(y_n)<v_i(x)$ for $i=1,\dots,N$ and some $(r^m_1,\dots,r^m_N) \in \mathbb Q^N$ such that $v_i(y_m)<r^m_i<v_i(x)$ for $i=1,\dots,N$. Finally, by construction, there exists, for all $m >0$, some $x_m \in (t_n)_{n \geq 0}$ such that $q^m_i<v_i(x_m)<r^m_i$ for $i=1,\dots,N$. To conclude, we show that $\sqcup (x_m)_{m \geq 0} = x$. By construction, for all $m \geq 0$, $v_i(x_m) < v_i(x_{m+1})$ for $i=1,\dots,m$. Hence, $(x_m)_{m \geq 0}$ is an increasing sequence and $\sqcup (x_m)_{m \geq 0}$ exists. Moreover, by construction, $\sqcup (x_m)_{m \geq 0} \preceq x$. Now, if $\sqcup (x_m)_{m \geq 0} \prec x$, then $v_i(\sqcup (x_m)_{m \geq 0}) <  v_i(x)$ for $i=1,\dots,N$, which contradicts the definition of $(x_m)_{m \geq 0})$. Hence, since $x \prec y$ implies $x \ll y$ by Lemma \ref{preceq to ll}, $B=\text{K}(P) \cup (t_n)_{n \geq 0}$ is a countable basis.
\end{proof}
We can now interpret Theorem \ref{thm 4} $(1)$ in terms of computability.

\begin{rem}[Implication for computability]
By Theorem \ref{thm 4}, we can define computable elements and functions (in the sense of Definitions \ref{def: comp ele II} and \ref{def: comp func}) on a dcpo $P$ with a finite lower semicontinuous strict monotone multi-utility whenever $\text{K}(P)$ is countable and $\text{K}(P) \cup (t_n)_{n \geq 0}$ is effective. (Note that $(t_n)_{n \geq 0}$ was defined in Proposition \ref{effective for strict}.)
\end{rem}

Note we have shown in the proof of Theorem \ref{thm 4} $(1)$ that the equivalence in Proposition \ref{way-below upper comp} also holds when substituting conditional connectedness by the existence of a finite lower semicontinuous strict monotone multi-utility. Note, as we stated in Proposition \ref{w-cont implies Deb upper dense}, there exist $\omega$-continuous dcpos which are not Debreu separable. The inclusion of $\omega$-continuity in both clauses of Theorem \ref{thm 4} is necessary in order for $\text{K}(P)$ to be countable and for $P$ to be Debreu upper separable, as we show in  Proposition \ref{nece thm 2} $(1)$. Moreover, in  Proposition \ref{nece thm 2} $(2)$, we show the converse of Theorem \ref{thm 4} $(2)$ is false. That is, although the equivalence between the clauses $(2)$ and $(3)$ in Theorem \ref{thm 2} and the fact they imply Theorem \ref{thm 2} $(1)$ are achieved requiring the existence of a finite lower semicontinuous strict monotone multi-utility instead of conditional connectedness, Theorem \ref{thm 2} $(1)$ does not imply neither Theorem \ref{thm 2} $(2)$ nor Theorem \ref{thm 2} $(3)$. Lastly, note there are dcpos where $\text{K}(P)$ is countable and $P$ is not even continuous, like the one in Lemma \ref{compact implies contained if supremum}, where $\text{K}(P)=\emptyset$ and, as argued there, there is no $x \in P$ such that $x \ll 0 \in P$.

\begin{prop}
\label{nece thm 2}
There exist dcpos $P$ with finite lower semicontinuous strict monotone multi-utilities and either of the following properties:
\begin{enumerate}[label={(\arabic*)}]
\item $\text{K}(P)$ is uncountable and $P$ is not Debreu upper separable. 
\item $P$ is Debreu upper separable and $\text{K}(P)$ is uncountable.
\end{enumerate}
\end{prop}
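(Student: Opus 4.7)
I would exhibit two separate examples, one for each clause.

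For clause (1), take $P = [0,1]$ equipped with the equality order $x \preceq y \iff x = y$. This is trivially a dcpo (all directed sets are singletons), so $\text{K}(P) = P$ is uncountable and the Scott topology is discrete, which makes every function on $P$ lower semicontinuous. The family $V = \{f, -f\}$, for any injection $f : [0,1] \to \mathbb{R}$, is a two-element strict monotone multi-utility: both members are vacuously strict monotones because $\prec$ is empty, and $f(x) \leq f(y)$ combined with $-f(x) \leq -f(y)$ forces $x = y$. Debreu upper separability fails because, for any two distinct $x \neq y$, the only candidate $d \in P$ with $x \bowtie d \preceq y$ is $d = y$ itself, so any Debreu upper dense subset must equal $P$.

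For clause (2), I would reuse the dcpo $P = \Sigma^* \cup \Sigma^\omega$ from Proposition \ref{cont Deb upper sep but not w-cont}, taking $\Sigma = \{0,1\}$. This $P$ is already known to be Debreu upper separable and (by Proposition \ref{strong prop 4}) to satisfy $\text{K}(P) = P$ uncountable. The remaining task is to produce a finite lower semicontinuous strict monotone multi-utility. The key tool is the ternary-style embedding $f : \Sigma^\omega \to [0, 1/2]$ given by $f(\alpha) = \sum_{i \geq 1} \alpha_i 3^{-i}$, which is injective and enjoys a gap property: whenever $\alpha \in \Sigma^\omega$ is not an extension of $s \in \Sigma^*$, one has $f(\alpha) \notin [f(s \cdot 0^\omega), f(s \cdot 1^\omega)]$, with a gap of at least $3^{-|s|}/2$ on either side (proved by splitting at the first position where $\alpha$ and $s$ disagree). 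I would then set, on $\Sigma^\omega$, $v_1(\alpha) = f(\alpha)$, $v_2(\alpha) = -f(\alpha)$, $v_3(\alpha) = 1$, and, on $\Sigma^*$, $v_1(s) = f(s \cdot 0^\omega) - 3^{-|s|}/4$, $v_2(s) = -f(s \cdot 1^\omega) - 3^{-|s|}/4$, $v_3(s) = -|s|$.

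The verification splits by the type of incomparable pair, each resolved by a different combination of the $v_i$. Two distinct elements of $\Sigma^\omega$ are made incomparable by the anti-diagonal embedding via $(v_1, v_2)$. A finite string $s$ and an infinite $\alpha$ with $s$ not a prefix of $\alpha$ are handled by the gap property of $f$ combined with the $3^{-|s|}/4$ offsets, which force either $v_1(s) > v_1(\alpha)$ or $v_2(s) > v_2(\alpha)$. For two distinct elements $s \neq t$ of $\Sigma^*$, the non-prefix subcase is again handled by the gap property, while the proper-prefix subcase, in which $v_1$ and $v_2$ both increase with the length of the prefix, is resolved by $v_3(s) = -|s|$, which strictly decreases along prefixes. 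Strict monotonicity at each $s \prec \alpha$ is immediate from the bounds $f(s \cdot 0^\omega) \leq f(\alpha) \leq f(s \cdot 1^\omega)$ together with the positive offsets and with $v_3(s) < 1$. Lower semicontinuity of each $v_i$ follows at once from the fact, established in the proof of Proposition \ref{cont Deb upper sep but not w-cont}, that every directed set in $P$ contains its supremum: each pre-image $v_i^{-1}((a, \infty))$ is upper closed by monotonicity of $v_i$ and trivially inaccessible by directed suprema.

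The main obstacle is the numerical coordination in clause (2): the offsets $3^{-|s|}/4$ must be simultaneously small enough not to violate strict monotonicity at each $s \prec \alpha$, and large enough relative to the $3^{-|s|}/2$ gap to force the incomparability of $(v_1, v_2, v_3)(s)$ and $(v_1, v_2, v_3)(\alpha)$ in $\mathbb{R}^3$ whenever $s$ is not a prefix of $\alpha$. Once this numerical balancing is verified, all other checks are routine.
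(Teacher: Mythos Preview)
Your proof is correct. For clause~(1) you and the paper use the same example: $[0,1]$ with the trivial order and the two-element family $\{f,-f\}$.

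For clause~(2) your route differs from the paper's. The paper replaces the $\Sigma^* \cup \Sigma^\omega$ dcpo by what it calls an ``essentially'' equivalent example on $[0,1]$, namely $x \preceq y$ iff $x=y$ or ($x \in \mathbb{Q}$, $y \notin \mathbb{Q}$, $x<y$), together with the two-element family $v_1=\mathrm{id}$ and $v_2(x)=-x$ for irrational $x$, $v_2(x)=-x-1$ for rational $x$. This is a much shorter construction than your ternary encoding with three functions. However, the paper's $[0,1]$ example is in fact \emph{not} Debreu upper separable: given two irrationals $x>y$ one has $x \bowtie y$, yet every $d \preceq y$ with $d \neq y$ is a rational $d<y<x$ and hence satisfies $d \prec x$ rather than $d \bowtie x$; so any Debreu upper dense subset must contain every irrational in $[0,1]$. (The structural feature of the original $\Sigma^* \cup \Sigma^\omega$ example that makes Debreu upper density work---that below each $\alpha \in \Sigma^\omega$ there are finite strings incomparable to any other given $\beta \in \Sigma^\omega$---is precisely what the linear $[0,1]$ model loses.) Your decision to stay with the genuine $\Sigma^* \cup \Sigma^\omega$ dcpo avoids this gap, at the price of the more delicate three-function multi-utility. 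The numerical balancing you flag as the main obstacle is indeed the crux, and your offset $3^{-|s|}/4$ against the gap lower bound $3^{-k}/2 \geq 3^{-|s|}/2$ (with $k$ the first disagreement position) does the job in every subcase.
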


\begin{proof}
$(1)$ Take the dcpo $P$ which consists of the set $[0,1]$ endowed with the trivial ordering and note $V \coloneqq \{i_d,-i_d\}$ is a strict monotone multi-utility, where $i_d$ is the identity function. Both functions in $V$ are lower semicontinuous in $\sigma(P)$ since $\preceq$ is the trivial ordering and, whenever $\sqcup D =x$ for some directed set, we have $D=\{x\}$. Because of that, $\text{K}(P)=[0,1]$ and $P$ is not Debreu upper separable.

$(2)$ Take the dcpo $P \coloneqq ([0,1],\preceq)$, where, for all $x,y \in P$, $x \preceq y$ if and only if $x=y$ or $x \in \mathbb{Q}$, $y \not \in \mathbb{Q}$ and $x<y$. Note $P$ is, essentially, the counterexample in Proposition \ref{cont Deb upper sep but not w-cont}. As we showed there, $\text{K}(P)=P$ is uncountable and $P$ is Debreu upper separable. To conclude, one can see $V \coloneqq \{v_1,v_2\}$ is a strict monotone multi-utility, where $v_1$ is the identity function and $v_2(x) \coloneqq -x$ if $x \not \in \mathbb{Q}$ and $v_2(x) \coloneqq -x-1$ if $x \in \mathbb{Q}$. Note the functions in $V$ are lower semicontinuous by the same reason the functions in $(1)$ are.
\end{proof}

Note Theorem \ref{thm 4} $(2)$ can be used to conclude certain dcpos have no finite strict monotone multi-utility that is lower semicontinuous in their Scott topology. We can apply this, in particular, to the examples in Section \ref{examples} that are not Debreu upper separable (see Lemma \ref{example domains}).

\begin{coro}
\label{no finite r-p}
Both $(\Lambda^n,\preceq_M)$ for $n \geq 3$ and $(I,\sqsubseteq)$ have no finite lower semicontinuous strict monotone multi-utility representation.
\end{coro}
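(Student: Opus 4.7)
The plan is to argue by contradiction, leveraging Theorem \ref{thm 4} $(2)$ together with Lemma \ref{example domains}. Suppose, for contradiction, that one of $(\Lambda^n,\preceq_M)$ (with $n\geq 3$) or $(\mathcal{I},\sqsubseteq)$ admits a finite lower semicontinuous strict monotone multi-utility. By Lemma \ref{example domains}, both of these dcpos are $\omega$-continuous, so they possess a countable basis $B$ (namely $\mathbb{Q}^n\cap\Lambda^n$ in the majorization case and $B_{\mathcal{I}}$ in the interval case).

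Applying Theorem \ref{thm 4} $(2)$ to this countable basis $B$, we conclude that $B$ is a countable Debreu dense subset of the dcpo in question. However, Lemma \ref{example domains} states that any Debreu dense subset of $(\Lambda^n,\preceq_M)$ (for $n\geq 3$) and of $(\mathcal{I},\sqsubseteq)$ has the cardinality of the continuum $\mathfrak{c}$, hence in particular is uncountable. This is a direct contradiction, so no finite lower semicontinuous strict monotone multi-utility can exist on either dcpo.

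The proof is essentially a one-line deduction from previously established results, so no step presents a genuine obstacle; the whole point of stating the corollary is to illustrate that Theorem \ref{thm 4} $(2)$ has concrete non-existence consequences. The only care needed is to verify that Lemma \ref{example domains} indeed supplies both the $\omega$-continuity (so that a countable basis exists to feed into Theorem \ref{thm 4} $(2)$) and the cardinality obstruction on Debreu dense subsets, which together yield the contradiction.
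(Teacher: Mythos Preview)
Your argument is correct and matches the paper's reasoning exactly: the paper states this corollary without a formal proof, noting just beforehand that Theorem~\ref{thm 4}~$(2)$ combined with Lemma~\ref{example domains} (which supplies both $\omega$-continuity and the cardinality obstruction on Debreu dense subsets) yields the result. Your write-up simply spells this out.
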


The examples in Corollary \ref{no finite r-p} are also useful to show we cannot improve on Theorem \ref{thm 4} $(2)$ by weakening the hypothesis from finite strict monotone multi-utilities to multi-utilities, as we show in the following proposition.

\begin{prop}
\label{counter finite lsc}
There exist $\omega$-continuous dcpos which, despite having finite lower semicontinuous multi-utilities, are not Debreu upper separable.
\end{prop}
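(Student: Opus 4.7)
My plan is to take the interval domain $(\mathcal{I}, \sqsubseteq)$ as a witness. By Lemma \ref{example domains}~(1), $(\mathcal{I}, \sqsubseteq)$ is $\omega$-continuous yet every Debreu dense subset of $\mathcal{I}$ has cardinality $\mathfrak{c}$; in particular $\mathcal{I}$ admits no countable Debreu dense subset, and therefore cannot be Debreu upper separable. So the task reduces to exhibiting a finite lower semicontinuous multi-utility on $(\mathcal{I},\sqsubseteq)$.

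For this I would set $v_1,v_2 : \mathcal{I} \to \mathbb{R}$ to be
\[
v_1([a,b]) := \arctan a, \qquad v_2([a,b]) := -\arctan b, \qquad v_1(\perp) := v_2(\perp) := -\pi/2.
\]
Since $\arctan$ is strictly increasing, the relation $[a,b]\sqsubseteq[c,d]$ (i.e.\ $a \leq c$ and $d \leq b$) is equivalent to the conjunction $v_1([a,b]) \leq v_1([c,d])$ and $v_2([a,b]) \leq v_2([c,d])$, and the comparisons involving $\perp$ work because $-\pi/2$ strictly lower-bounds the range of $\arctan$ on $\mathbb{R}$. Hence $\{v_1,v_2\}$ is a multi-utility.

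For lower semicontinuity I would first make explicit how suprema in $\mathcal{I}$ behave. A directed $D \subseteq \mathcal{I}$ either satisfies $D \subseteq \{\perp\}$ with $\sqcup D = \perp$, or else $\sqcup D = [\sup a,\inf b]$ with $[a,b]$ ranging over $D \setminus \{\perp\}$, the inequality $\sup a \leq \inf b$ being forced by directedness (any two non-$\perp$ elements of $D$ admit a common refinement). Fix $\alpha \in \mathbb{R}$ and $i \in \{1,2\}$, and let $U := v_i^{-1}\bigl((\alpha,\infty)\bigr)$. Upper closure of $U$ is immediate from the monotonicity of $v_i$; for Scott-inaccessibility one splits on whether $\alpha < -\pi/2$ (in which case $U = \mathcal{I}$ and there is nothing to prove) or $\alpha \geq -\pi/2$, in which case $\sqcup D \in U$ forces the relevant sup or inf of endpoints to strictly exceed the threshold $\pm\tan\alpha$, so by the defining property of the real supremum some element of $D$ already lies in $U$.

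The only mildly technical ingredient is the coordinate-wise description of suprema in $\mathcal{I}$; once that is in place, Scott-inaccessibility of each half-space $\{v_i > \alpha\}$ reduces to the standard property of real suprema, so I do not anticipate any genuine obstacle.
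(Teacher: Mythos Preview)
Your proof is correct. Both properties you need from Lemma~\ref{example domains}(1) are indeed available, and your two-function multi-utility $\{v_1,v_2\}$ works: the $\arctan$ compression ensures $\perp$ can be given a finite value strictly below the range of each function, and your case analysis for Scott-openness of $v_i^{-1}((\alpha,\infty))$ is sound once the coordinatewise description of directed suprema in $\mathcal{I}$ is in place.

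The paper takes a different witness: it uses majorization $(\Lambda^n,\preceq_M)$ for $n\geq 3$, with the partial-sum functions $(s_i)_{i=1}^{n-1}$ as the finite multi-utility, and establishes lower semicontinuity of each $s_k$ by invoking Lemma~\ref{arbitrary q} to produce, for every $p$ with $s_k(p)>r$, a rational point $q\ll p$ still satisfying $s_k(q)>r$. Your route through the interval domain is more elementary in that the supremum formula $\sqcup D=[\sup a,\inf b]$ makes lower semicontinuity immediate without any auxiliary approximation lemma, and it realises the minimum possible size (two functions) for a multi-utility on a non-total order. The paper's choice, on the other hand, ties the counterexample to the same family it uses elsewhere (Corollary~\ref{no finite r-p}) and, incidentally, shows that the natural coordinate functions already witness the phenomenon. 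Either example settles the proposition.
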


\begin{proof}
We can take  majorization $(\Lambda^n,\preceq_M)$ with any $n \geq 3$ as a counterexample. As we show in Lemma \ref{example domains}, $(\Lambda^n,\preceq_M)$ is $\omega$-continuous and not Debreu separable for all $n\geq 3$. To conclude, we show $(\Lambda^n,\preceq_M)$ has a finite lower semicontinuous multi-utility $\text{for all } n \geq 2$. In particular, we show $(s_i)_{i=1}^{n-1}$ is a finite lower semicontinuous multi-utility of majorization $(\Lambda^n,\preceq_M)$ for all $n \geq 2$, where $s_i(x) \coloneqq \sum_{j=1}^i x_j$.

For simplicity of notation, we define $P \coloneqq (\Lambda^n,\preceq_M)$. We show $s_k$ is lower semicontinuous $\text{for all } k < n$, that is, that $s_k^{-1}(r, \infty) \in \sigma(P)$ $\text{for all } r \in \mathbb{R}$, $k < n$. Take $k<n$ and some $r \in \mathbb{R}$ such that $k/n \leq r <1$ and note the other cases are straightforward. (If $r<k/n$, then $s_k^{-1}(r, \infty) = \Lambda^n$ since $s_k(\perp)=k/n$ for $k=1,\dots,n-1$, and $\Lambda^n \in \sigma(P)$ by definition of topology. Moreover, if $1 \leq r$, then $s_k^{-1}(r, \infty) = \emptyset$ since $s_k((1,0,\dots,0))=1$ for $k=1,\dots,n-1$, and $\emptyset \in \sigma(P)$ by definition of topology.) Notice, given $p \in s_k^{-1}(r ,\infty)$, there exists some $q \in s_k^{-1}(r, \infty)$ such that $p \in \twoheaduparrow q \subseteq s_k^{-1}(r, \infty)$. To see this, we can take some $\varepsilon < s_k(p)-r$ and apply Lemma \ref{arbitrary q}, obtaining some $q \in \mathbb{Q}^n \cap \Lambda^n$ such that $s_k(p)-\varepsilon < s_k(q) < s_k(p)$ $\text{for all } k<n$. We have, in particular, $q \ll p$ by \eqref{way-below majo}.  This concludes the proof, since we have $s_k^{-1}(r, \infty) \in \sigma(P)$ given that
$\twoheaduparrow q \in \sigma(P)$ $\text{for all } q \in P$ \cite[Corollary 2.2.16]{abramsky1994domain}.
\end{proof}

Note Proposition \ref{counter finite lsc} also shows we cannot improve on the result by asking for the existence of countably infinite lower semicontinuous strict monotone multi-utilities, since they also exist for the counterexample in Proposition \ref{counter finite lsc}. In fact, they exist whenever lower semicontinuous countable multi-utilities do \cite{alcantud2016richter}.

To summarize, the main results in this section are Proposition \ref{LSC m-u existence} and Theorem \ref{thm 4}. In the first one, we show that any dcpo has a lower semicontinuous multi-utility and, moreover, that we can pick one with the cardinality of any basis the dcpo may have. In the latter, we show that, whenever finite strict monotone multi-utilities exist, the existence of countable bases is equivalent to the countability of $\text{K}(P)$ and, moreover, that any basis is both Debreu dense and Debreu upper dense.

\section{Conclusion}

In this paper, we have illustrated the role of countability restrictions in the attempt of translating computability from Turing machines to uncountable spaces using ordered structures. We have connected the countability restrictions in a general order-theoretic approach to computability that was recently introduced \cite{hack2022computation} and the ones in domain theory \cite{scott1970outline,abramsky1994domain} to the usual ones in order theory, namely, order density properties and multi-utilities. In particular, we have
established several connections between order density properties, such as Debreu separability, order density or Debreu upper separability, and the existence of countable weak bases in the more general approach. We have also explored the influence of order density properties in
domain theory, establishing their equivalence with countable bases for the class of dcpos that are 
conditionally connected,
which includes the prominent example of the Cantor domain.
After connecting order density with
both order completeness and continuity in the Scott topology, we finished relating bases, weak bases and order density to multi-utilities.
Regarding computability, we obtained several results which show, for a given dcpo with either some functional (multi-utility) or density countability restriction, how computability can be defined starting from these constraints.
Several questions remain open. For example, it would be relevant to further clarify the role of multi-utilities in computability, since they play a leading role in the study of partial orders.

\newpage
\begin{appendix}
\section{Appendix}
\label{appendix}

\subsection{Proofs}
\label{appen proofs}

\begin{prop}
\label{exists chain inside}
If $P$ is a Debreu separable dcpo and $A \subseteq P$ is a directed set, then there exists an increasing chain $(a_n)_{n\geq0} \subseteq A$ with the same supremum as $A$, $\sqcup (a_n)_{n\geq0}=\sqcup A$.
\end{prop}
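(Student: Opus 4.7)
The plan is to reduce the statement to Lemma \ref{countable covers}, which already produces an increasing sequence in the countable Debreu dense subset $D$ converging to $\sqcup A$, and then to \emph{lift} that sequence into $A$ using the directedness of $A$.

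First I would dispose of the trivial case $\sqcup A \in A$, where the constant sequence $a_n := \sqcup A$ works. So assume $\sqcup A \notin A$. Let $D \subseteq P$ be a countable Debreu dense subset. By Lemma \ref{countable covers}, applied to $x = \sqcup A$ and the non-trivial directed set $A$, there is an increasing sequence $(d_n')_{n \geq 0} \subseteq D_A$ with $\sqcup (d_n')_{n \geq 0} = \sqcup A$, where $D_A = \{d \in D \mid \exists a,b \in A \text{ s.t. } a \preceq d \preceq b\}$. In particular, for each $n \geq 0$ there exists some $b_n \in A$ with $d_n' \preceq b_n$, by the very definition of $D_A$.

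Next I would use the directedness of $A$ to build the desired chain inside $A$. Set $a_0 := b_0 \in A$, and inductively choose $a_{n+1} \in A$ such that $a_n \preceq a_{n+1}$ and $b_{n+1} \preceq a_{n+1}$; such an $a_{n+1}$ exists because $A$ is directed. Then $(a_n)_{n \geq 0} \subseteq A$ is an increasing chain with $d_n' \preceq b_n \preceq a_n$ for every $n \geq 0$.

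Finally I would check that $\sqcup (a_n)_{n \geq 0} = \sqcup A$. On the one hand, $a_n \in A$ gives $a_n \preceq \sqcup A$ for all $n$, hence $\sqcup (a_n)_{n \geq 0} \preceq \sqcup A$. On the other hand, $d_n' \preceq a_n$ implies that any upper bound of $(a_n)_{n \geq 0}$ is also an upper bound of $(d_n')_{n \geq 0}$, whose supremum is $\sqcup A$; so $\sqcup A \preceq \sqcup (a_n)_{n \geq 0}$. Antisymmetry closes the argument. No step is really an obstacle here: the content is Lemma \ref{countable covers}, and the only addition is the routine directedness-based lifting from $D_A$ into $A$.
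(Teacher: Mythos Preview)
Your proof is correct and follows essentially the same route as the paper: handle the trivial case $\sqcup A\in A$, invoke Lemma~\ref{countable covers} to get the increasing sequence $(d_n')_{n\ge 0}\subseteq D_A$ with supremum $\sqcup A$, pick witnesses $b_n\in A$ with $d_n'\preceq b_n$, and then lift to an increasing chain in $A$. The only cosmetic difference is that the paper first observes $(b_n)_{n\ge 0}$ is directed \emph{within itself} (any $b_n,b_m$ are bounded by some $b_p$) and then reuses the chain-extraction trick from Lemma~\ref{countable covers}, whereas you build the chain directly by the obvious inductive use of directedness of $A$; your version is slightly more streamlined but the content is identical.
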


\begin{proof}
Take $D\subseteq P$
a Debreu dense subset of $P$. If $\sqcup A \in A$, then we take $(a_n)_{n\geq0}$ with $a_n \coloneqq \sqcup A$ $\text{for all } n \geq 0$ and we have finished. Otherwise, consider the increasing sequence $(d_n')_{n\geq0}$ $ \subseteq D$ such that $\sqcup (d_n')_{n\geq0} =\sqcup A$ from the proof of Lemma \ref{countable covers}. By construction, there exists some $b_n \in A$ such that $d'_n \preceq b_n$ $\text{for all } n \geq 0$. Notice $(b_n)_{n\geq0}$ is a directed set, since given $n,m \geq 0$ there exist some $c \in A$ such that $b_n,b_m \preceq c$ and, by construction, some $p \geq0$ such that $b_n,b_m \preceq c \preceq d'_p \preceq b_p$. Thus, we construct an increasing chain $(a_n)_{n\geq0} \subseteq A$ from $(b_n)_{n\geq0}$, like we constructed $D'_A$ from $D_A$ in the proof of Lemma \ref{countable covers}. Notice $\sqcup (a_n)_{n\geq0}$ exists, since $P$ is directed complete. We only need to show $\sqcup (a_n)_{n\geq0}=\sqcup A$. Be definition, $a_n \preceq \sqcup A$ $\text{for all } n \geq0$. Assume there exists some $z \in P$ such that $a_n \preceq z$ $\text{for all } n$. Then, $d_n'  \preceq z$ $\text{for all } n \geq 0$ and, thus, $\sqcup A=\sqcup (d_n')_{n\geq0} \preceq z$. Thus, $\sqcup (b_n')=\sqcup A$.
\end{proof}

\begin{lem}
\label{upper comp charact}
If $P$ is a partial order, then $P$ is conditionally connected if and only if any directed set is a chain.
\end{lem}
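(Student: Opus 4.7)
The plan is to prove both implications directly from the definitions, exploiting the fact that conditional connectedness and the chain condition on directed sets are essentially the same statement applied at different levels of generality.

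For the forward direction, I would assume $P$ is conditionally connected and take an arbitrary directed set $A \subseteq P$. Given any $a,b \in A$, directedness supplies some $c \in A$ with $a \preceq c$ and $b \preceq c$; conditional connectedness then forces $\neg(a \bowtie b)$, i.e., $a$ and $b$ are comparable. Since $a,b$ were arbitrary, $A$ is a chain. This direction is essentially a one-line unfolding of definitions.

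For the converse, assume every directed set in $P$ is a chain and take $x,y \in P$ together with some $z \in P$ such that $x \preceq z$ and $y \preceq z$. The natural move is to exhibit a small directed set containing $x$ and $y$: namely $A \coloneqq \{x,y,z\}$. One checks directly that $A$ is directed, since $z$ is an upper bound inside $A$ for every pair of elements of $A$ (here reflexivity handles the pairs $\{z,z\}$, $\{x,z\}$, $\{y,z\}$, while $x,y \preceq z$ covers $\{x,y\}$). By hypothesis, $A$ is a chain, so $x$ and $y$ are comparable, which is precisely $\neg(x \bowtie y)$.

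I do not anticipate any genuine obstacle; the statement is essentially a reformulation, and the only mild point to be careful about is verifying that the three-element witness set is directed (which requires invoking reflexivity of $\preceq$, already part of the definition of a partial order). The whole proof should fit in a short paragraph.
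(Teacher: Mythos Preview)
Your proposal is correct and matches the paper's proof essentially line for line: both directions use the same moves (directedness supplies a common upper bound to invoke conditional connectedness; conversely, the three-element set $\{x,y,z\}$ is the directed witness). Your extra remark about reflexivity in checking directedness of $\{x,y,z\}$ is a minor elaboration the paper leaves implicit.
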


\begin{proof}
 Consider a directed set $A \subseteq P$ and $x,y \in A$. Since $A$ is directed, there exists $z \in A$ such that $x \preceq z$ and $y \preceq z$. We have, by conditional connectedness, $\neg(x \bowtie y)$ and, thus, $A$ is a chain. Conversely, take $x,y \in P$ such that there exists $z \in P$ where $x \preceq z$ and $y \preceq z$ hold. Take $A \coloneqq \{x,y,z\}$. By construction, $A$ is directed and, by hypothesis, a chain. In particular, $\neg(x \bowtie y)$ and $P$ is conditionally connected.
\end{proof}

\subsection{Results for majorization and the interval domain}
\label{results major + int}

\subsubsection{Proof of Proposition \ref{majo eff weak basis}}
\label{proof prop 1}

By Lemma \ref{example domains}, $\mathbb{Q}^n \cap \Lambda^n$ is a countable weak basis for any $n \geq 2$. Hence, it is sufficient to show there exists a finite map $\alpha: \mathbb{N} \to \mathbb{Q}^n \cap \Lambda^n$ such that $\{\langle n,m \rangle| \alpha(n) \preceq \alpha(m)\}$ is recursively enumerable. We begin with a finite map $\alpha_0: \mathbb{N} \to \mathbb{Q} \cap [0,1]$ which, aside from $0$ and $1$, orders the rationals in $[0,1]$ lexicographically, considering first the denominators and then the numerators (see $\alpha$ in \cite[Proposition 1]{hack2022computation}).
Using $\alpha_0$, we construct now $\alpha$
for the case $n=2$ by just selecting some pairs in $\alpha_0(\mathbb{N}) \times \alpha_0(\mathbb{N})$. If $m=0$, then $\alpha(m)=(\alpha_0(1),\alpha_0(0))$ and we define $p_0=1$ and $q_0=0$. Notice $\alpha_0(1) + \alpha_0(0)=1$. If $m\geq 1$, we begin with $p_m=p_{m-1}-1$ and $q_m=q_{m-1}+1$, if $p_{m-1}>0$, and with $p_m=q_{m-1}+1$ and $q_m=0$, if $p_{m-1}=0$. If $\alpha_0(p_m) + \alpha_0(q_m)=1$, then $\alpha(m)=(\alpha_0(p_m),\alpha_0(q_m))$ ordering them decreasingly, if necessary. Otherwise, we decrease $p_m$ one unit and increase $q_m$ one unit and continue doing so until we get either two rational numbers whose sum is one or $p_m=0$. In the former case, if we achieved our goal after $k$ decreases, then we fix $p_m=p_{m-1}-k$ and $q_m=k$ and take $\alpha(m)= (\alpha_0(p_m),\alpha_0(q_m))$, ordered if necessary. In the latter case, we consider $p_m=p_{m-1}+1$ and $q_m=0$ and repeat the one-unit decrease of $p_m$ and one-unit increase of $q_m$ process until we find a pair of rationals whose sum is one. Once ordered, we take this pair as $\alpha(m)$ and we fix $p_m$ and $q_m$ accordingly.
Notice we can follow an analogous procedure to construct a finite map for any $n>2$.
From now on, we consider an arbitrary $n \geq 2$. We now only need to show $\{\langle n,m \rangle| \alpha(n) \preceq \alpha(m)\}$ is recursively enumerable, that is, we need to construct some computable function $f: \mathbb{N} \to \{\langle n,m \rangle| \alpha(n) \preceq \alpha(m)\}$. Given $m \in \mathbb{N}$, we get $p.q \in \mathbb{N}$ such that $m=\langle p,q \rangle$ and calculate $\alpha(p),\alpha(q)$. If $s_k(\alpha(p)) \leq s_k(\alpha(q))$ $\text{for all } k\leq n$, then $f(m)=m$. Otherwise, $f(m)=0$, since $0=\langle0,0\rangle$ and $\alpha(0) \preceq \alpha(0)$.

\pagebreak

\subsubsection{Proof of Lemma \ref{example domains}}
\label{lemma 5 proof}

$(1)$ As we known from \cite{edalat1999domain}, $(\mathcal{I}, \sqsubseteq)$ is $\omega$-continuous. Take $Z \subseteq \mathcal{I}$ a Debreu dense subset of $ \mathcal{I}$. Take for each any $x \in \mathbb{R}$ some $y_x < x$ and notice $[y_x,x] \sqsubset [x,x]$. By Debreu separability, there exists some $z \in Z$ such that $[y_x,x] \sqsubseteq z \sqsubseteq [x,x]$. Thus, defining $z \coloneqq [z_1,z_2]$, we have $z_2=x$,  since, by definition, $x \leq z_2 \leq x$. If we fix for each $x$ such a $z$ and denote it by $z_x$, we have $x$ determines $z_x$ uniquely. Hence, the map $f:\mathbb{R} \to Z$, $x \mapsto z_x$ is injective and, by injectivity of $f$, $\mathfrak{c} \leq |Z|$. Thus, $Z$ has the cardinality of the continuum.

$(2),(3)$ By \cite[Lemma 5 (i) and (ii)]{hack2022representing}, we know both that $(\Lambda^n,\preceq_M)$ is order separable, thus Debreu upper separable, if $n=2$ and that any Debreu dense subset has the cardinality of the continuum if $n \geq 3$. To conclude, we show $(\Lambda^n,\preceq_M)$ is $\omega$-continuous $\text{for all } n \geq 2$. In particular, we show for each $x \in \Lambda^n$ there exists some $B_x \subseteq \mathbb{Q}^n \cap \Lambda^n$ such that $x=\sqcup B_x$ and $b\ll x$ $\text{for all } b \in B_x$ and obtain, as a result, $\mathbb{Q}^n \cap \Lambda^n$ is a countable basis. If $x=\perp$, then $B_\perp \coloneqq \{\perp\} \subseteq \mathbb{Q}^n \cap \Lambda^n$ does the job, since $\perp \in K(\Lambda^n)$. If $x \neq \perp$, then take
\begin{equation*}
    B_x \coloneqq \{q \in \mathbb{Q}^n \cap \Lambda^n| s_k(q) < s_k(x) \text{ }\text{for all } k<n \}.
\end{equation*}
Notice we have $q \ll x$ by \eqref{way-below majo}, thus, $q \preceq x$ $\text{for all } q \in B_x$. To finish, we need to show $\sqcup B_x=x$.
Assume there exits some $y \in \Lambda^n$ such that $q \preceq y$ $\text{for all } q \in B_x$. By Lemma \ref{arbitrary q}, that would mean for any $\varepsilon >0$ we have $s_k(x)-\varepsilon < s_k(y)$ $\text{for all } k<n$. Thus, $s_k(x) \leq s_k(y)$ $\text{for all } k\leq n$ and $x \preceq y$. As a result, $\sqcup B_x=x$. An alternative proof that $(\Lambda^n,\preceq_M)$ is $\omega$-continuous $\text{for all } n \geq 2$, where Lemma \ref{arbitrary q} is not used, can be found in Appendix \ref{alternative}.

\subsubsection{Second proof that majorization has a countable basis}
\label{alternative}

We prove here majorization is $\omega$-continuous $\text{for all } n \geq 2$ without using Lemma \ref{arbitrary q}.
By \cite[Theorem 1.3]{martin2006entropy}, we know $(\Lambda^n,\preceq_M)$ is a continuous dcpo $\text{for all } n\geq 2$. We will show $B \coloneqq \mathbb{Q}^n\cap \Lambda^n$ is a countable basis. In order to do so, it is sufficient to show, $\text{for all } x,y \in \Lambda^n$ such that $x \ll y$, there exists some $b \in B$ such that $x \ll b \ll y$ \cite[Proposition 2.4]{mao2018measurement}. By \eqref{way-below majo}, if $x \ll y$, then
either $x=\perp$ or $\sum_{i=1}^k x_i < \sum_{i=1}^k y_i$ $\text{for all } k<n$. Since $\perp \in B$ and $\perp \ll \perp$, we can take $b=\perp$ in the first case.
Assume the second case holds. Note $0<x_i<1$ $\text{for all } i \leq n$, since the opposite contradicts the fact $\sum_{i=1}^k x_i < \sum_{i=1}^k y_i$ $\text{for all } k<n$. Take $(\varepsilon_i)_{i=1}^{k-1}$, where
  \begin{equation*}
    \begin{cases}
    0<\varepsilon_i < \text{min} \Big\{y_i-x_i,\text{ } s_{i+1}(y)-s_{i+1}(x) 
    \Big\} & \text{if} \text{ } i=1,\\
    0<\varepsilon_i < \text{min} \Big\{ s_i(y)-s_i(x) - s_{i-1}(\varepsilon)
    ,\\
    s_{i+1}(y)
    -s_{i+1}(x)-s_{i-1}(\varepsilon)\Big\}
    & \text{if} \text{ } 1<i \leq n-2 \\
    0<\varepsilon_i <
    s_i(y)-s_i(x)-s_{i-1}(\varepsilon)
    & \text{if} \text{ } i= n-1
    \end{cases}
\end{equation*}
Note the first upper bound in the definition of
$\varepsilon_i$ is the property we will use here $\text{for all } i\leq n-2$,
while the second upper bound is included to make sure $\varepsilon_{i+1}$ is well-defined. Take $\text{for all } i<n$ some $b_i \in \big(x_i,x_i + \varepsilon_i\big) \cap \mathbb{Q}$ such that $b_i \geq b_{i+1}$ $\text{for all } i<n-1$ and define $b:=\big(b_1,..,b_n\big)$, where $b_n:=1- \sum_{j=1}^{n-1} b_j$, implying $b \in \mathbb{Q}^n$. Note $b_n= 1 - \sum_{j=1}^{n-1} b_j < 1- \sum_{j=1}^{n-1} x_j = x_n \leq x_{n-1} < b_{n-1}$, which implies $b \in \Lambda^n$. By definition of $(\varepsilon_i)_{i=1}^{k-1}$, we have 
\begin{equation*}
\label{way-be ok}
    \sum_{j=1}^k x_j < \sum_{j=1}^k b_j < \sum_{j=1}^k (x_j + \varepsilon_j) < \sum_{j=1}^k x_j +\sum_{j=1}^k (y_j -x_j)= \sum_{j=1}^k y_j
\end{equation*}
$\text{for all } k < n$. Thus, $x \ll b \ll y$ and we have finished.

\subsubsection{Proof of Lemma \ref{arbitrary q}}
\label{proof lemma majo}

Consider $x \in \Lambda^n\setminus \{\perp\}$. Assume first $x=(\frac{1}{m},..,\frac{1}{m},0,.,0)$ for some $m<n$. Fix w.l.o.g. $0<\varepsilon <\frac{1}{m}$, consider some $\varepsilon' \in \mathbb{Q}$ such that \begin{equation*}
    0<\varepsilon'< \text{min} \Big\{\frac{\varepsilon}{m},\frac{\varepsilon}{n}\Big(n-m\Big)\Big\}
\end{equation*} 
and define $\beta = \frac{m}{n-m} \varepsilon'$. We define then the components of $q$, $q_i=\frac{1}{m}-\varepsilon'$ for $1 \leq i \leq m$ and $q_i=\beta$ for $m<i \leq n$. To assure $q \in \Lambda^n$ we need to show $s_n(q)=1$ and $q_i \leq q_{i-1}$ for $2 \leq i< n$. Notice $s_n(q)=s_n(x)-m\varepsilon' +(n-m) \beta =1$ by definition of $\beta$ while for the second part it suffices to show $\frac{1}{m} -\varepsilon' > \beta$, which holds as we have
\begin{equation*}
    \frac{1}{m} -\varepsilon' > \beta \iff \frac{1}{m} -\varepsilon' > \frac{m}{n-m} \varepsilon' \iff \frac{1}{m} > \frac{n}{n-m} \varepsilon'
\end{equation*}
and, by definition of $\varepsilon'$,
\begin{equation*}
    \varepsilon' < \varepsilon\frac{n-m}{n} < \frac{n-m}{n m}.
\end{equation*}
We show now $s_k(q)>s_k(x)-\varepsilon$ $\text{for all } k<n$ holds. If $i\leq m$ we have $s_i(q)=s_i(x)-i\varepsilon'>s_i(x)-\varepsilon$, since $\varepsilon'<\frac{\varepsilon}{m}$ holds by definition. If $m<i<n$, then we get $s_i(q)=1-m\varepsilon'+(i-m)\beta > 1-\varepsilon$, as the following holds 
\begin{equation*}
     \varepsilon'< \frac{\varepsilon}{m} \implies \varepsilon' < \frac{\varepsilon (n-m)}{m(n-i)} \iff m\varepsilon'+(m-i)\beta <\varepsilon,
\end{equation*}
where the first inequality is true by definition of $\varepsilon'$ and the first implication holds since $m<i<n$. Note $s_k(q)=\frac{1}{m}-k \varepsilon' < s_k(x)$ for $1 \leq k\leq m$ and $s_k(q)=1 - (n-k) \beta < 1=s_k(x)$ for $m < k < n$.

We assume now $x \neq (\frac{1}{m},..,\frac{1}{m},0,.,0)$ and define $k= \text{min} \big\{i < n|x_j=x_i \text{ or } x_j=0 \text{ }\text{for all } j\geq i\big\}$, if it exists, and, otherwise, $k=n$. We also define $h=\text{min} \{k\leq i\leq n| x_{i+1}=0\}$, if it exists, and, otherwise, $h=n$. Lastly, consider $\alpha=h-(k-1)$. We fix now $\varepsilon>0$, assume $h=n$ and consider some  $\varepsilon'$ such that
\begin{equation*}
    0<\varepsilon'< \text{min} \Big\{\frac{\varepsilon}{k},\Big(1+\frac{k-1}{\alpha}\Big)^{-1} \Big(x_{k-1}-x_k\Big)\Big\}.
\end{equation*} 
Notice $\varepsilon'$ is well defined as we have $x_{k-1}>x_k$. We take now $q_i \in (x_i-\varepsilon',x_i)\cap \mathbb{Q}$ $\text{for all } i<k$ such that $q_i \leq q_{i-1}$ for $2 \leq i<k$ and $q_i = \tau$ for $k \leq i \leq h$ where $\tau=\frac{1}{\alpha}(1-\sum_{i=1}^{k-1} q_i)$. Notice as $h=n$ we have $s_n(q)=1$ and we also have $\tau<q_{k-1}$ as
\begin{equation*}
\begin{split}
    \tau &= \frac{1}{\alpha}\Big(1-\sum_{i=1}^{k-1} q_i\Big) < \frac{1}{\alpha}\Big(1-\sum_{i=1}^{k-1} x_i +(k-1) \varepsilon'\Big) = x_k +\frac{k-1}{\alpha} \varepsilon' \\ 
    &\stackrel{(i)}{<} x_{k-1}-\varepsilon'<q_{k-1}
    \end{split}
\end{equation*}
where we have used the definition of $\varepsilon'$ in $(i)$. We show now we have $s_i(q)>s_i(x)-\varepsilon$ $\text{for all } i<h$ which is sufficient as we are assuming $h=n$. If $i\leq k-1$ we have
\begin{equation*}
    s_i(q)>s_i(x)-i \varepsilon' >s_i(x)-\frac{i}{k} \varepsilon > s_i(x)- \varepsilon.
\end{equation*}
while if $k\leq i < h$ we have
\begin{equation*}
    s_i(q)= \sum_{j=1}^{k-1} q_j + \sum_{j=k}^i q_j \stackrel{(i)}{>} s_k(x)- \varepsilon + \sum_{j=k}^i x_j = s_i(x)- \varepsilon.
\end{equation*}
where in $(i)$ we have used the fact $q_i>x_i$ $\text{for all } i \geq k$ as we have
\begin{equation*}
    x_i=x_k=\frac{1}{\alpha}\Big(1-s_{k-1}(x)\Big)<\frac{1}{\alpha}\Big(1-s_{k-1}(q)\Big)=\tau=q_i.
\end{equation*}
Thus, if $h=n$ we have finished. Note we can see $s_i(q) < s_i(x)$ for $1 \leq i < n$ similarly to the previous case.

Assume now $h<n$. Notice defining some $q' $ where $q'_i=q_i$ if $i\leq h$ and $q'_i=0$ if $h<i \leq n$ does not work, since we would have $s_h(q)=s_h(x)=1$. Thus, we need $q'_i >0$ if $h<i\leq n$. Consider some $\beta \in \mathbb{Q}$ such that  
\begin{equation*}
0<\beta < \text{min} \Big\{\frac{\varepsilon}{n-h}, \Big(1+\frac{n-h}{h-(k-1)}\Big)^{-1} q_k,\frac{1}{n-h}\Big(s_k(q)-\Big(s_k(x)-\varepsilon\Big)\Big)\Big\},
\end{equation*}
where $q$ is defined as in the case $h=n$, and define $\beta'=\frac{n-h}{h-(k-1)} \beta$. We consider now $q'$ where $q'_i=q_i$ if $i < k$, $q'_i=q_i-\beta'$ if $k\leq i \leq h$ and $q'_i=\beta$ if $h<i$. Notice we have $q'_i \leq q'_{i+1}$ for $2 \leq i <n$, since we have it already from $q$ in case $i \leq h$, and for $h<i$ it suffices to show $q_k'>\beta$, which is true as
\begin{equation*}
    q'_k=q_k-\beta'=q_k-\frac{n-h}{h-(k-1)} \beta > \beta,
\end{equation*}
where the inequality holds by definition of $\beta$. Notice, also, we have $s_n(q')=s_h(q) - \beta'(h-(k-1))+\beta (n-h) =s_h(q)=1$. To conclude, we only need to show $s_i(q')>s_i(x)-\varepsilon$ $\text{for all } i<n$. If $i<k$, then we already have it as $s_i(q')=s_i(q)$ and we know it holds for $q$. If $k\leq i \leq h$, then it also holds as 
\begin{equation*}
    s_i(q')=s_i(q)-(i-(k-1))\beta' \geq s_i(q)-(h-(k-1))\beta' \stackrel{(i)}{>} s_i(x)-\varepsilon,
\end{equation*}
where, in $(i)$, we apply the fact that for $k \leq i \leq h$ we have
\begin{equation*}
   \beta'<\frac{1}{h-(k-1)} \Big(s_k(q)-\Big(s_k(x)-\varepsilon\Big)\Big) \stackrel{(ii)}{\leq} \frac{1}{h-(k-1)} \Big(s_i(q)-\Big(s_i(x)-\varepsilon\Big)\Big),
\end{equation*}
where the first inequality holds by definition of $\beta$ and $\beta'$
and $(ii)$ also does, as we have for $k\leq i \leq h$
\begin{equation*}
\begin{split}
  s_i(q)-(s_i(x)-\varepsilon) &= s_k(q) +(i-k)q_k -(s_k(x)-\varepsilon)-(i-k)x_k \\
  &=s_k(q) -(s_k(x)-\varepsilon) + (i-k)(q_k-x_k) \\
  &\leq s_k(q) -(s_k(x)-\varepsilon),
  \end{split}
\end{equation*}
where we used the fact $q_k>x_k$ in the inequality.
If $h<i<n$, then
\begin{equation*}
s_i(q')=1-(n-i)\beta >1-\varepsilon = s_i(x)-\varepsilon,
\end{equation*}
where the inequality holds since we have $\beta < \frac{\varepsilon}{n-h} < \frac{\varepsilon}{n-i}$. Note we can see $s_i(q) < s_i(x)$ for $1 \leq i < n$ similarly to both previous cases.

\end{appendix}

\newpage
\bibliographystyle{plain}
\bibliography{main}
\end{document}